\newtheorem{theorem}{Theorem}[subsection]
\newtheorem{lemma}[theorem]{Lemma}
\newtheorem{definition}[theorem]{Definition}
\newtheorem{proposition}[theorem]{Proposition}
\theoremstyle{remark}
\newtheorem{remark}[theorem]{Remark}
\newtheorem{example}[theorem]{Example}
\newcommand{\CC}{\mathbb{C}}
\newcommand{\QQ}{\mathbb{Q}}
\newcommand{\RR}{\mathbb{R}}
\newcommand{\PP}{\mathbb{P}}
\newcommand{\ZZ}{\mathbb{Z}}
\newcommand{\TT}{\mathbb{T}}
\newcommand{\KK}{\mathbb{K}}
\renewcommand{\AA}{\mathbb{A}}
\newcommand{\OO}{\mathscr{O}}
\newcommand{\sB}{\mathcal{B}}
\newcommand{\Ieff}{\mathrm{I}\text{-}\mathrm{Eff}}
\renewcommand{\a}{\mathbf{a}}
\newcommand{\n}{\mathfrak{n}}
\newcommand{\wP}[1]{\mathbb{P}^1_{a,1}}
\newcommand{\fX}{\mathfrak{X}}
\newcommand{\cB}{\mathcal{B}}
\renewcommand{\sslash}{\mathord{/\mkern-6mu/}}
\let\tinymatrix\smallmatrix
\patchcmd{\tinymatrix}{\scriptstyle}{\scriptscriptstyle}{}{}
\patchcmd{\tinymatrix}{\scriptstyle}{\scriptscriptstyle}{}{}
\patchcmd{\tinymatrix}{\vcenter}{\vtop}{}{}
\patchcmd{\tinymatrix}{\bgroup}{\bgroup\scriptsize}{}{}
\newcommand{\bmu}{\pmb{\mu}}
\newcommand{\bi}{\mathfrak{i}}
\newcommand{\sA}{\mathcal{A}}
\newcommand{\Gm}{\mathbb{G}_m}
\newcommand{\diag}{\mathrm{diag}}
\newcommand{\Stab}{\mathrm{Stab}}
\newcommand{\age}{\mathrm{age}}
\newcommand{\cX}{\mathcal{X}}
\newcommand{\cC}{\mathcal{C}}
\newcommand{\cL}{\mathcal{L}}
\renewcommand{\L}{\mathscr{L}}
\newcommand{\cA}{\mathcal{A}}
\newcommand{\fp}{\mathfrak{p}}
\newcommand{\In}[1]{\mathcal{I}(#1)}
\newcommand{\Beta}{B}
\newcommand{\cM}{\mathcal{M}}
\newcommand{\BM}{\mathrm{BM}}
\newcommand{\CR}{\mathrm{CR}}
\newcommand{\bX}{\mathbf{X}}
\newcommand{\bG}{\mathbf{G}}
\newcommand{\bp}{\mathbf{p}}
\newcommand{\loc}{\mathrm{loc}}
\newcommand{\un}[1]{\underline{#1}}
\renewcommand{\AA}{\mathbb{A}}
\newcommand{\GG}{\mathbb{G}}
\newcommand{\cY}{\mathcal Y}
\newcommand{\bt}{\mathbf{t}}
\newcommand{\bT}{\mathbf{T}}
\newcommand{\bS}{\mathbf{S}}
\newcommand{\bY}{\mathbf{Y}}
\newcommand{\bx}{\mathbf{x}}
\newcommand{\vir}{\mathrm{vir}}
\newcommand{\bxi}{\pmb{\xi}}
\newcommand{\one}{\mathbf{1}}
\DeclareMathOperator{\Spec}{Spec}
\newcommand{\inv}{\mathrm{inv}}
\newcommand{\br}{\mathbf{r}}
\DeclareMathOperator{\Sym}{Sym}
\DeclareMathOperator{\Pic}{Pic}
\DeclareMathOperator{\Hom}{Hom}
\DeclareMathOperator{\Eff}{Eff}
\newcommand{\NOTEoff}{\newcommand{\Commentn}[1]{}}
\newcommand{\note}[1]{\Commentn{#1}}
\newcommand\TODOon{\newcommand{\Comment}[1]{\noindent\color{red}{\texttt TODO: }##1\color{black}}}
\newcommand\RACHELon{\newcommand{\Commentr}[1]{\noindent\color{blue}{\texttt Rachel: }##1\color{black}}}
\newcommand{\Rachel}[1]{\Commentr{#1}}
\newcommand\Nawazon{\newcommand{\Commentp}[1]{\noindent\color{green}{\texttt Nawaz: }##1\color{black}}}
\newcommand{\Nawaz}[1]{\Commentp{#1}}
\begin{document}

%%%%%%%%%%%%%%%%%%%%
% This code turns the TODO and NOTE comments on and off.
% Comment/uncomment one of these lines at a time
%\TODOon
\TODOon

% Comment/uncomment one of these lines at a time
\NOTEoff
%\NOTEoff

% Comment/uncomment one of these lines at a time
%\Nawazon
\Nawazon

% Comment/uncomment one of these lines at a time
%\RACHELon
\RACHELon
%%%%%%%%%%%%%%%%%%

\title{Some Applications of Abelianization in Gromov-Witten Theory}
%\title{Some Applications of Nonabelian Quasimap $I$-function Formulas}
% \title{Some Applications of Nonabelian Quantum Lefschetz for I-functions}

\author{Nawaz Sultani}
\address[N.Sultani]{Institute of Mathematics\\
Academia Sinica\\
Taipei, 10617, Taiwan}
\email{sultani@gate.sinica.edu.tw}

\author{Rachel Webb}
\address[R. Webb]{Department of Mathematics\\
Cornell University\\
Ithaca, NY 14853\\
U.S.A.}
\email{r.webb@cornell.edu}

\date{\today}

\begin{abstract}
Let $G$ be a complex reductive group and let $X$ and $E$ be two linear representations of $G$. Let $Y$ be a complete intersection in $X$ equal to the zero locus of a $G$-equivariant section of the trivial bundle $E \times X \to X$. We explain some general techniques for using quasimap formulas to compute useful $I$-functions of $Y\sslash G$. We work several explicit examples, including a rigorous derivation of the conjectural quantum period in \cite{OP}. 
\end{abstract}

\keywords{quasimaps, I-function, abelianization, orbifold Gromov-Witten theory, quantum Lefschetz, computation}

\subjclass[2020]{14N35, 14D23}
% 53D45 is GW invariants, 
% 14N35 is GW invariants in AG
% 14D23 is stacks and moduli problems
\maketitle

\setcounter{tocdepth}{1}
\tableofcontents

\section{Introduction}

% \todo{revisit name: this only contains one application of quantum lefschetz...} \Nawaz{GW invariants for nonabelian targets?} \Rachel{I like that one OK; I also like having "applications" in the title because I think it signals that this is an examples paper}

% \todo{plan for revision: (0) change the title? (1) rewrite the intro to be more positive and more "floaty", even more for a general audience. Say what you DO first, and only put limitations at the end. About I-function, say J(mu)=I can be read in two ways, either as wall crossing, or (at least in toric case) as a hypergeom series equal to period integrals of the mirror. (2) Tom is confused by equ 4 because he wants to know that the I-func is hypergeom series. He already knows the stuff in the appendix. (3) submit to compositio or JAG first (try to get under 30!) (4) make the ql thing a separate paper, 7 pages is OK. submit to MRES, Trans of the AMS, or short LMS journal (then it will go to alessio or tom) for more material, look in CGK something 4d fano manifolds---or anything where Tom computes quantum periods. REALLY EXPLAIN why other formulas are wrong. also talk to mark about this.}

A major challenge in computing the genus-zero Gromov-Witten theory of a Deligne-Mumford stack $\cX$ is computing invariants with several insertions in the twisted sectors of $\cX$. Familiar techniques used to study Gromov-Witten theory of schemes cannot access all of these invariants: for example, the divisor equation only applies to untwisted divisor classes \cite{Tseng10}. 
%\Rachel{Nawaz do you agree?}\Nawaz{Latter statement is true, for the former I'm not sure what familiar techniques are}
As of this writing, the genus-zero Gromov-Witten invariants of $\cX$ are only well understood when $\cX$ is a toric stack \cite{CCIT15} or an iterated root stack \cite{TY23}. Theorems in \cite{CCK15, gonzalez} applying to more general targets only compute invariants with at most one twisted insertion.

This paper provides a new technique for computing genus-zero invariants of $\cX$ with multiple twisted insertions when $\cX$ is a GIT quotient of an affine scheme.
%\Nawaz{this terminology is maybe not right. Affine GIT quotient usually refers to the quotient in affine schemes (i.e. taking invariant functions on global sections)}\Rachel{good point, better?}. 
Our approach is to adapt the extended stacky fans used in \cite{CCIT15} and \cite{TY23} to a construction that applies to any GIT quotient of affine space, a construction we call an \textit{extended GIT presentation of $\cX$}. Indeed, our treatment of extended presentations may be of independent interest. 
%\Nawaz{this sentence of being independent interest is maybe not needed} 
We demonstrate our Gromov-Witten technique in several examples, most notably recovering a conjectural formula of Oneto--Petracci for the quantum period of an orbifold del Pezzo surface.

\subsection{Summary of Results} \label{sec:results}

Let $X$ be a representation of a connected complex reductive group $G$ and let $\theta$ be a character of $G$ such that the stacky GIT quotient
\[
X\sslash_\theta G := [X^{ss}_\theta(G)/G]
\]
is a nonempty Deligne-Mumford stack. We set $\cX = X\sslash_\theta G$. Let $T \subset G$ be a maximal torus and let $W$ be the associated Weyl group. We further assume that every $g \in G$ acting with nonempty fixed locus in $X^{ss}_\theta(G)$ is semi-simple and has connected centrilizer (see Section \ref{sec:I-func}).
%\footnote{This is so we can apply the results of \cite{Webb21}; see \cite[Sec 1.1.1]{Webb21}.} \Nawaz{might just refer to relevant part of this paper for the background} \Rachel{fix this}
% \Rachel{edited; does it look ok now?}

\subsubsection{A general theorem}
The vector space $H^\bullet_{\CR}(\cX; \QQ)$ is 
is equal to the singular cohomology of a disconnected space (the inertia stack) with open and closed subspaces called \textit{sectors}, which are indexed by certain conjugacy classes in $G$.
We define the \textit{Weyl-invariant sectors} to be those sectors whose associated conjugacy class contains a Weyl-invariant element of $T$. Our main theorem provides a formula for a series containing the invariants of $\cX$ with arbitrarily many insertions equal to fundamental classes of Weyl-invariant sectors.

To state it, recall the $J$-function $J=J(\bt, Q, z)$ of $\cX$ introduced by Tseng in \cite{Tseng10} (following Givental \cite{Giv01} in the scheme case). The parameter $Q$ lives in the Novikov ring $\Lambda_\cX$ and $z$ is a formal variable. The parameter $\bt = \sum_{k \geq 0} \bt_k z^k$ lives in $H^\bullet_{\CR}(\cX; \QQ)\llbracket z \rrbracket$ (although one often uses that the restriction $J(\bt_0, Q, z)$ determines all of $J$---see e.g. the introduction of \cite{CCIT19}). 
%\Nawaz{The phrase "detemines all of $J$" is maybe a bit weird here, since J is referenced without parameters}
%\Rachel{made a change in the first line of the paragraph; what do you think? could also use parameters here} 
The following theorem computes a specialization of $J(\bt, Q, z)$ and is stated more precisely later as Theorem \ref{thm:only}.

\begin{theorem}[Theorem \ref{thm:only}]\label{thm:intro}

Let $\gamma_1, \ldots, \gamma_n$ be the fundamental classes of the Weyl-invariant sectors of $\cX$. Then there is a $\QQ$-algebra $\Lambda$ (Definition \ref{def:i-eff nov}) with a maximal ideal $\mathfrak{q}$, a homomorphism $\iota: \Lambda_{\cX} \to \Lambda$, and a series
\[ 
\mu \in \Lambda\llbracket t_i, z \rrbracket \otimes_\QQ H^\bullet_{CR}(\cX; \QQ ) \quad \quad \quad \quad \text{satisfying} \quad \quad \quad \quad \partial \mu/\partial t_i \equiv \gamma_i  \mod \mathfrak{q},
% \bt^W = \sum_{i=1}^n \gamma_i t_i + \bt' \quad \quad \quad \quad\in \Lambda \llbracket t_i, z \rrbracket \otimes_\QQ H^\bullet_{CR}(\cX; \QQ ) 
\]
such that the corresponding specialization $J(\mu, \iota(Q), z)$ has an explicit formula. Moreover, the homomorphism $\iota$ is very often injective (see Remark \ref{rmk:extend-pic}).
%\Rachel{I think the meaning of the derivative is clear since we are working in a power series ring. Nawaz do you agree?}
\end{theorem}

% We make two remarks about the above statement.
% The first is that because of the derivative condition on $\mu$, the specialization $J(\mu, \iota(Q), z)$ in Theorem \ref{thm:intro} contains . . . .
We make two remarks about the above statement. The first is that  because of the derivative condition on $\mu$, the specialization $J(\mu, \iota(Q), z)$ in Theorem \ref{thm:intro} contains all the Gromov-Witten invariants of $\cX$ with one arbitrary insertion and the remaining insertions equal to $\gamma_i$'s.  In sufficiently nice settings, one can extract these  invariants from $J(\mu, \iota(Q), z)$ (we do this for example in Theorem \ref{thm:intro2}). The second is that even though Theorem \ref{thm:intro} only applies to $G$-representations $X$, the precise statement in Theorem \ref{thm:only} can profitably be used to study Gromov-Witten invariants of $Y\sslash_\theta G$ for certain $Y$ that are complete intersections in $X$. 
%For this purpose we introduce the notion of \textit{CI-GIT presentations} in Section \ref{sec:ci-git-pres} (the CI stands for ``complete intersection''). 

\begin{remark}\label{rmk:intro}
We have stated Theorem \ref{thm:intro} in a way that highlights its implications for Gromov-Witten theory. However, the precise formulation in Theorem \ref{thm:only} uses the quasimap theory of \cite{CCK15}. In particular, the specialization $J(\mu, \iota(Q), z)$ is quasimap $I$-function arising from a specific presentation of $\cX$ as a GIT quotient. This $I$-function is known to be a specialization of the $J$-function by the mirror theorem of Yang Zhou \cite{yang}, and its explicit formula mentioned in the theorem can already be found in \cite{Webb21} (recalled in Theorem \ref{thm:Ifunc-formula} below). The content of Theorem \ref{thm:only} is that there exists a GIT presentation of $\cX$ such that the associated series $\mu$ has the desired derivative property. We discuss our construction of the correct GIT presentation further in Section \ref{sec:intro-git}. 
\end{remark}

%Moreover the homomorphism $\iota$ is very often injective.

% \begin{theorem}[Theorem \ref{thm:only}]\label{thm:intro}

% Let $\gamma_1, \ldots, \gamma_n$ be the fundamental classes of the Weyl-invariant sectors of $\cX$. Then there is a Novikov ring $\Lambda$ (Definition \ref{def:i-eff nov})  and explicit formulae for a series
% \[ 
% \mu = \sum_{i=1}^n \gamma_i t_i + \mu_\Lambda \quad \quad \quad \quad\in \Lambda \llbracket t_i, z \rrbracket \otimes_\QQ H^\bullet_{CR}(\cX; \QQ ) 
% % \bt^W = \sum_{i=1}^n \gamma_i t_i + \bt' \quad \quad \quad \quad\in \Lambda \llbracket t_i, z \rrbracket \otimes_\QQ H^\bullet_{CR}(\cX; \QQ ) 
% \]
% and the corresponding specialization $J(\mu, Q, z)$. Moreover any term of $\bt'$ of degree at most one in the $t_i$ also contains a nonconstant element of $\Lambda$. \Nawaz{I don't understand this last sentence. $t_i$ is already a nonconstant term of $\Lambda$. Did you mean nonconstant in $z$? Might be better to write $\bt' \in z \Lambda$ in that case} \Rachel{nonconstant ->  nonrational, or different . . . fix this, or maybe use derivative notation (mod an ideal?) }
% \end{theorem}

%\Nawaz{We need to introduce GIT stack quotient before this, else it looks like the regular GIt quotient which is a scheme...}

%\Nawaz{As stated I don't see why it doesn't hold for CI-GITs? Why do we have to restrict to $X$ a vector space in the intro? }

\subsubsection{Applications of the theorem}
Using Theorem \ref{thm:intro} (and its implications for complete intersections) we compute restrictions of $J(\bt, Q, z)$ for several examples previously not known in the literature, including a weighted Grassmannian (Section \ref{sec:wgr}), a weighted flag space (Section \ref{sec:wfl}), and the total space of a bundle on a weighted Grassmannian (Section \ref{sec:wbun}). Perhaps of most interest, we recover the conjectural formula for a restriction of the quantum period of the orbifold del Pezzo surface $X_{1, 7/3}$ given in \cite[Sec~6.2]{OP}. In fact, by extending the GIT presentation, we are able to compute the \textit{full} quantum period.

\begin{theorem}[Section \ref{sec:delpez}]\label{thm:intro2}
The full quantum period of $X_{1, 7/3}$ is equal to
\[
 e^{-t(x+5)}\sum_{\tilde \beta_i \in \ZZ_{\geq 0}} A_{\tilde \beta}\left(1 + \frac{\tilde \beta_1-\tilde \beta_2}{2}(-3B_{\tilde \beta_1}+3B_{\tilde \beta_2} - 2B_{2\tilde \beta_1 + \tilde \beta_2 + \tilde \beta_3} + 2B_{\tilde \beta_1 + 2\tilde \beta_2 + \tilde \beta_3} )\right)
\]
where $A_{\tilde \beta}$ is the hypergeometric series in $t$ and $x$ defined in Section \ref{sec:period} and $B_\ell = \sum _{k=1}^\ell 1/k$.
\end{theorem}
The regularization of the quantum period in Theorem \ref{thm:intro2} is conjecturally equal to the classical period of the Laurent polynomial mirror to $X_{1, 7/3}$. Using Theorem \ref{thm:intro2} we check this equality holds for the first few terms.

\subsubsection{Extended GIT presentations}\label{sec:intro-git}

To prove Theorems \ref{thm:intro} and \ref{thm:intro2}, we use the technique of \textit{extended GIT presentations}, which we believe to be of independent interest. This technique leverages the fact that a Deligne-Mumford stack $\cX$ may have many presentations as a GIT quotient $X\sslash_\theta G$, and presentations using higher dimensional $X$ and $G$ can be used to explicitly construct line bundles on $\cX$. The key observation for for this paper is that the quasimap $I$-functions mentioned in Remark \ref{rmk:intro} depend on the GIT triple $(X, G, \theta)$ and not just on $X\sslash_\theta G$.

The nonuniqueness of GIT presentations leads to the question, given one description of $\cX$ as a GIT quotient $X\sslash_\theta G$, how can one systematically construct other GIT presentations of $\cX$? Jiang answered this question for abelian $G$ with his construction of extended stacky fans in \cite{Jiang08}. For nonabelian $G$, we present a recipe for constructing triples $(X \times \AA^1, G \times \Gm, \vartheta)$ whose associated GIT quotients are isomorphic to $X\sslash_\theta G$. The input to the construction is a 1-parameter subgroup of $GL(X)$ commuting with the image of $G$, and a precise result can be found in Proposition \ref{prop:extend-presentation}.

% We make some remarks about our proofs of Theorems \ref{thm:intro} and \ref{thm:intro2}, since the techniques should be of independent interest. 
% %\Nawaz{I think we delete this first sentence, or just rephrease it to say what our method of proof is.}
% Given GIT data $(X, G, \theta)$ as above, the quasimap theory of \cite{CCK15} produces a specialization of $J(\bt, Q, z)$ called the \textit{$I$-function}. Explicit formulae for $I^{X, G, \theta}$ and the specialization relating it to $J$ are given in \cite{CCK15, nonab1, Webb21}, and one sees that the formula depends on the triple $(X, G, \theta)$ and not just on the quotient $X\sslash_\theta G$. In \cite{Jiang08} Jiang introduced extended stacky fans to systematically describe other GIT triples producing a given quotient $X\sslash_\theta G$ when $G$ is abelian. To prove our theorems, we generalize this idea to nonabelian GIT quotients and present a recipe for constructing an \textit{extended GIT presentation} of $X\sslash_\theta G$. The input is a 1-parameter subgroup of $GL(X)$ commuting with the image of $G$, and a precise result can be found in Proposition \ref{prop:extend-presentation} below.

\note{here's how to write down an extension that gets a basis for chen-ruan cohomology. Nawaz says, let $t^J_\alpha$ denote the element of Chen-Ruan cohomology in the twisted sector $\alpha$ given by multiplying the divisor classes corresponding to the columns of the weight matrix corresponding to the index set $J$ (so $J$ is a subset of $1 \ldots n$ where your weight matrix has $n$ columns). The set $J$ should be a subset of the indices corresponding to the twisted sector $\alpha$. Let $\nu$ be the 1-ps that you use to "get" the fundamental class of the $\alpha$ twisted sector. Now subtract 1 from each coordinate of $\nu$ whose index appears in $J$. When you extend by the resulting 1-ps you get $t^J_\alpha z^{-1} 1_\alpha$ times a new novikov variable. It remains to check that the $t^J_\alpha$ generate Chen-Ruan cohomology of $X\sslash T$ (as a $\QQ$-vector space). The key is that if the columns of the weight matrix fail to span the character lattice (over $\QQ$), then you can find a 1-ps that acts trivially on the whole vector space (so there are no semistable points). Call this observation (*). First, note that observation (*) implies that columsn of the weight matrix generate the character lattice, so $t^J_\alpha$ will generate $H^2$ of the untwisted sector. Second, note that for any twisted sector $\alpha$ observation (*) applied to the vector space $X_\alpha$ implies that if $X^{ss}_\alpha$ is not empty then the $t^J_\alpha$ will generate $H^2$ of this twisted sector. To get higher codimension classes (elements of $H^k$ for $k>2$), observe that these can all be written as polynomials in the $x_i$. We have performed an invertible linear change of basis on the $x_i$'s (by choosing a subset of the columns of the weight matrix that form a $\QQ$-basis for the character lattic), so for example all degree 2 monomials in the new basis will still generate all degree-2 monomials in this polynomial ring.}

\subsection{Scope of our methods}
%\Nawaz{I don't think we should have this in the intro. If we want to talk about this, I think it would fit better as remarks in the specific sections of the two examplesx}\Rachel{Nawaz, see if this is ok}
It is interesting to ask, what is the scope of Theorem \ref{thm:intro}? For example, how much of $H^\bullet_{CR}(X\sslash_\theta G; \QQ)$ is supported on Weyl-invariant sectors?

This is a soft question, but we answer with two examples.
The first is a stack with nonabelian isotropy, namely $BS_3$. In this case, our methods recover Gromov-Witten invariants with insertions coming from the identity and the 2-cycle sectors, and we prove that this is all we can do (see Section \ref{sec:bs3}). Hence our methods recover less of the Gromov-Witten theory of this target than other techniques in the literature (e.g. \cite[Prop~3.1]{JK02}, \cite{TT21}).

Second, our weighted Grassmannian example shows that even if a stack has abelian isotropy groups, in general only a portion of its Chen-Ruan cohomology is supported on Weyl-invariant sectors (Remark \ref{rmk:w-inv-sectors}).

% \subsection{Relation to extended stacky fans}
% Our notion of a GIT presentation is closely related to the concept of an extended stacky fan introduced in \cite{Jiang08}. Extended stacky fans are used in \cite{CCIT19} to write down $I$-functions for toric stacks: extending the stacky fan adds extra variables in the $I$-function of \cite[Def~28]{CCIT15}, and can lead to a more useful mirror map. In the $I$-function of \cite[Def~28]{CCIT15} it is allowed to extend by any element of the box of the original stacky fan. As discussed in Remark \ref{rmk:only-weyl}, \todo{fix this} if $X = \AA^n$ and $G$ is a complex reductive group, it seems we are only able to extend by Weyl-invariant elements of the box of the stacky fan associated to $X$ and a maximal torus $T\subset G$.\footnote{The stacky fan associated to $X=$ and a torus $T$ is discussed in \cite[Sec~4]{CIJ18}.}

\subsection{Summary of the paper}
In Section \ref{sec:background} we recall the precise definition of the series $J^{\cX}(\bt, Q, z)$ that we will use in this paper. This differs slightly from other defintions in the literature, particularly in the case when $\cX$ is noncompact (we do not need a torus action to make our definition). The approaches are compared in Appendix \ref{sec:noncompact}. In the remainder of Section \ref{sec:background} we introduce the series $I^{X, G, \theta}(q, z)$ and recall the relevant formulas for this series from \cite{Webb21}. 

In Sections \ref{sec:git-pres} we introduce our notions of GIT and CI-GIT presentations. Section \ref{sec:computing-I-effective-classes} contains vanishing conditions for summands of $I^{X, G, \theta}(q, z)$, and some related results that facilitate the computation of the mirror map $\mu$. We then apply these results and the extension technique of Section \ref{sec:git-pres} to prove Theorem \ref{thm:intro}.
Finally, Section \ref{sec:examples} contains explicit examples. The appendix compares various approaches to $I$ and $J$-functions, including a comparison of $J$-functions of noncompact targets defined using Borel-Moore homology versus torus localization.

\subsection{Notation and conventions}
\begin{itemize}
\item We work over $\CC$.
\item A group notated $G$ or $T$ is written multiplicatively with identity $1$.
\item If $E \to X$ is a vector bundle on a scheme or algebraic stack $X$ and $s: X \to E$ is a section, then $Z(s)$ is the zero locus of $s$.
%\Rachel{Is this reduced if $X$ is??} \Nawaz{Can't you take $\PP^1$ and $\cO(2)$ and then the section $x^2$? Maybe it is reduced for a generic section?} \Rachel{you're right, can even take $X=A^1$ and $E=\OO$. next question: why did I ask?}
\item If $X$ is a scheme, we will use $\Pic^G(X)$ to denote the $G$-equivariant Picard group of $X$. Note that $\Pic^G(X) \simeq \Pic([X/G])$. 
\item If $\theta$ is a character of a group $G$ and $\lambda$ is a 1-parameter subgroup of $G$ then $\langle \theta, \lambda \rangle$ is the integer such that $\theta \circ \lambda$ is equal to $t \mapsto t^{\langle \theta, \lambda \rangle}.$
\end{itemize}

\subsection{Acknowledgements}
The first author thanks Felix Janda for helpful conversations. The second author thanks Yang Zhou for much helpful discussion and Elana Kalashnikov for pointing out the conjectural formula for a quantum period in \cite{OP}. Both authors are grateful to Andrea Petracci for his conversation and to Tom Coates for helpful comments on an early draft of the paper. The second author was partially supported by an NSF Postdoctoral Research Fellowship, award number 200213. %\todo{Nawaz?}

\section{Background}\label{sec:background}

\subsection{GIT quotient stacks and their inertia}\label{sec:inertia}

% to denote its inertia stack:
% \[
% \In{\cX}:= \cX \times_{\Delta, \cX \times \cX, \Delta} \cX.
% \]
% where $\Delta: \cX \to \cX \times \cX$ is the diagonal morphism.
% {\color{blue}We should indicate it's by diagonal morphisms. Also, I think the $\cX \times \cX$ underline looks odd. Is this how it is typeset in other places?}

Let $G$ be a connected complex algebraic group acting on an affine variety $X$ and let $\chi(G)$ denote the character group of $G$. Choose $\theta \in \chi(G)$ a character of $G$. Let $X^s(G)$ (resp. $X^{ss}(G)$) denote the loci of stable (resp. semi-stable) points of $X$ with respect to $G$ and $\theta$, defined as in \cite[Def~2.1]{king} (note that we omit $\theta$ from the notation). Let $X^{us}(G) = X \setminus X^{ss}(G)$ be the locus of unstable points.

We define the stacky GIT quotient
\[
X\sslash_\theta G := [X^{ss}(G)/G].
\]
We will omit the character $\theta$ when there is no risk of confusion.
Moreover, for any subscheme $Y \subset X$ and subgroup $H \subset G$, we define
\begin{equation}\label{eq:notation}
Y\sslash_G H := [X^{ss}(G)\cap Y/H] \quad \quad \quad \Stab_{H}(Y) := \{g \in H \mid Y^g \neq \emptyset\}.
\end{equation}
% \Nawaz{Optional: can put above in definition environment to pop out more}
Observe that $\Stab_{H}{Y}$ is a union of $H$-conjugacy classes.
\note{It doesn't make sense to say it is a union of $G$-conjugacy classes because in the potential example of interest, $H=T$, the subgroup $H$ is not normal.}
In general, if $Z$ is any variety with a $G$-action and $\xi \in \chi(G)$ is a character, we denote the induced line bundle on $[Z/G]$ by $\cL_{\xi}.$

If $\cX$ is any Deligne-Mumford stack, its inertia stack is $\In{\cX}:= \cX \times_{\Delta, \cX \times \cX, \Delta} \cX$, where $\Delta: \cX \to \cX \times \cX$ is the diagonal morphism. The stack $\In{X\sslash G}$
is a disjoint union of open and closed substacks $\In{X\sslash G}_{(g)}$ indexed by congugacy classes $(g)$ in $\Stab_G(X^s(G))$. The substacks $\In{X\sslash G}_{(g)}$ are called the \textit{sectors} of $\In{X\sslash G},$ and the sector indexed by the identity element of $G$ is called the \textit{untwisted sector}. We use $\one\in H^\bullet(\In{X\sslash G}; \QQ)$ to denote (the Poincar\'e dual of) its fundamental class. The remaining sectors (not indexed by the identity) are called the \textit{twisted sectors.}
If $T \subset G$ is a maximal torus, then the Weyl group $W$ of $T$ in $G$ acts on $\In{X\sslash T}$, and this action sends $\In{X\sslash T}_t$ to $\In{X\sslash T}_{w\cdot t}$.

% The following properties of $\In{X\sslash G}$ are known (see e.g. \cite[Sec 2.1,2.2]{Webb21}):
% \begin{itemize}
% \item It is the global quotient of a variety by the group $G$. 
% \item It is a disjoint union of open and closed substacks $\In{X\sslash G}_{(g)}$ indexed by congugacy classes $(g)$ in $\Stab_G(X^s(G))$. The substacks $\In{X\sslash G}_{(g)}$ are called the \textit{sectors} of $\In{X\sslash G},$ and the sector indexed by the identity element of $G$ is called the \textit{untwisted sector}. We use $\one\in H^\bullet(\In{X\sslash G}; \QQ)$ to denote (the Poincar\'e dual of) its fundamental class. The remaining sectors (not indexed by the identity) are called the \textit{twisted sectors.}
% %\Nawaz{Double-check: Do we use twisted sector anywhere. Ans: Once in introduction}
% \item% \Rachel{I vote we delete the outside parentheses}
% If $T \subset G$ is a maximal torus, then the Weyl group $W$ of $T$ in $G$ acts on $\In{X\sslash T}$, and this action sends $\In{X\sslash T}_t$ to $\In{X\sslash T}_{w\cdot t}$.
% \end{itemize}

% \Rachel{following example is new} 
\begin{example}\label{ex:inertia}
Let $X$ be a vector space and suppose $G=T$ is abelian. Let $\xi_1, \ldots, \xi_n$ be the weights of $X$. We have
\[
\In{X\sslash T}_{g} = [(X^g \cap X^{ss}(T)) / T]
\]
where $X^g$ is the subspace fixed by $g$; i.e., $X^g$ is the subspace of $X$ spanned by eigenvectors for the weights $\xi_i$ where $\xi_i(g) =1.$ 
%\Nawaz{spanned by weights sounds a bit funny. Maybe that is standard, but I would say coordinates $x_i$ where $\xi_i(g) =1$?}
\end{example}

% The inertia stack $\In{X\sslash G}$ is a global quotient stack with components $(\In{X\sslash G})_{(g)}$ indexed by congugacy classes $(g)$ in $G$ (see e.g. [cite nonab2 2.1]). If $T \subset G$ is a maximal torus, then the Weyl group $W$ of $T$ in $G$ acts on $\In{X\sslash T}$ (see e.g. [cite nonab2 2.2]).
% \todo{do components of $\In{X\sslash G}$ biject with Weyl-orbits of components of $\In{X\sslash T}$? Components of $\In{X\sslash G}$ are indexed by conjugacy classes in $G$. Each of these has a torus represenative (maybe several). If $t_1$ and $t_2$ are Weyl-related, then they are conjugate in $G$, so their components map to the same component $(t_1)$ of $\In{X\sslash G}$. If $t_1$ and $t_2$ are conjugate in $G$, then are they weyl-conjugate? yes.}

% including inertia orbifold and components and weyl action. this paper only needs unrigidified inertia, which is how my formula originally goes anyway.

% including geometry of ab/nonab, quantum lefschetz

\subsection{Gromov-Witten invariants and the mirror map}
% This section is a brief exposition of the mirror theorem in \cite{yang}, emphasizing how the computations in this paper relate to Gromov-Witten invariants. We first define the series $J^{\cX}(\bt(z), Q, z)$ when $\cX$ is a GIT quotient $X\sslash G$, and we do so nonequivariantly even for noncompact targets. The next section describes the computable series $I^{X, G, \theta}(q, z)$ (though we defer the formula to Section \ref{sec:formula}). 

In this section we define the series $J^{X\sslash G}(\bt(z), Q, z)$ and $I^{X, G, \theta}(q, z)$ (nonequivariantly, and for noncompact $X\sslash G$). 
The key feature of the quasimap theory defined in \cite{CCK15} is that these series are defined in similar ways, using closely related moduli spaces, and the precise difference can be computed by a wall crossing formula. We will not explain this theory here, but the resulting relationship between $I$ and $J$ is recalled in \eqref{eq:mirror-thm} below. 
%\todo{nawaz does this look good?}

%\todo{update Yang's paper in the bibliography} \Nawaz{done}

\subsubsection{$J$-functions}\label{sec:jfuncs}
Let $G$ be a connected complex algebraic group acting on an affine variety $X$ and let $\theta \in \chi(G)$. For the rest of this paper we assume $X^s(G) = X^{ss}(G)$ and this locus is smooth and nonempty.\note{private heuristic for why we are not missing out on much to assume $G$ is connected: If $G$ is not connected let $G_0$ be the identity component and $H = G/G_0$. There is a fiber square
% \[
% \begin{tikzcd}
% BG_0 \arrow[d] \arrow[r] & BG \arrow[d] \\
% \bullet \arrow[r] & BH
% \end{tikzcd}
% \]
so a principal $G_0$ bundle is equivalent to a principal $G$-bundle such that the induced principal $H$-bundle is trivial. But I claim that on $\wP{a}$, every principal $G$ bundle induces a trivial $H$-bundle. This is because the map $\wP{a} \to BH$ must factor through the structure map $\wP{a} \to B\mu_a $ (I think . . . but why?), and the map $\wP{a} \to B\mu_a $ is equivalent to a homomorphism $\mu_a \to \Pic(\wP{a}),$ but this picard group is $\ZZ$ so the homomorphism is trivial. SO giving a principal $G$-bundle is equivalent to giving a principal $G_0$ bundle, and I expect that maps from $\wP{a}$ to $[X/G]$ are equivalent to maps from $\wP{a}$ to $[X/G_0]$.}
Then the stack $\cX := X\sslash G$ is a smooth Deligne-Mumford stack with a coarse moduli space $\underline{\cX}$ that is projective over the affine scheme $X\sslash_0 G:= \Spec(\Gamma(X, \OO_X)^G)$.
When $\cX$ is proper, the $J$-function of $\cX$ is defined in \cite{Tseng10}. Our $J$-function is a generalization of Tseng's following a recipe of Zhou \cite{zhou-email}.

We fix notation and conventions. The \textit{Chen-Ruan cohomology} of $\cX$ is $H^\bullet_{\CR}(\cX; \QQ)$. A \textit{stable map to} $\cX$ is a twisted curve $\cC$ and morphism $f: \cC \to \cX$, together with sections of the marked points of $\cC$, 
such that $f$ is stable in the sense of \cite[Sec~4.3]{AGV08}. If $f: \cC \to \cX$ is a stable map, its \textit{class} is the element $B \in H_2(\cX; \QQ):= H_2(\underline{\cX}; \QQ)$ equal to the image of the induced map of coarse spaces. The moduli stack of stable maps to $\cX$ of class $B$ and genus-0 source curve is $\overline{\cM}_{0, n}(X, B)$ (see \cite[Sec~6.1.3]{AGV08}). 
We will write $\overline{\cM}:= \overline{\cM}_{0, n}(X, B)$ when there is no risk of confusion, and we will use $ev_i: \overline{\cM} \to \In{\cX}$ for the evaluation maps.

Let $\Eff(\cX) \subset H_2(\cX; \QQ)$ be the monoid generated by classes of stable maps to $\cX$. If $B \in \Eff(\cX)$ and $\cL \in \Pic(\cX)$ we define $B(\cL)\in \QQ$ to be the following natural pairing: if $n$ is an integer such that $\cL^n$ descends to a line bundle $\underline{\cL'}$ on $\underline{\cX}$, then $B(\cL) =(1/n)\deg_{\underline{\cX}}(\underline{\cL'})$. 
%\Nawaz{Note that such an $n$ always exists (cite AGV). Maybe write"Given n such that $L^n$ descends..."?}
Let $e \in \ZZ$ have the property that $e\Beta(\cL_\theta) \in \ZZ$ for all $\Beta \in \Eff(\cX)$ (this exists by \cite[~Prop 2.1.1]{AGV08}, using \cite[Lem~2.1.2]{Webb21} in place of properness of $\cX$). Define a valuation $\nu: \QQ[\Eff(\cX)] \to \ZZ$ by 
\[
\nu\left( \sum_{\Beta \in \Eff(\cX)} a_\Beta Q^\Beta\right) := \min_{a_\Beta \neq 0} e\Beta(\cL_\theta).
\]
The \textit{Novikov ring} $\Lambda_\cX$ is the completion of the group ring $\QQ[\Eff(\cX)]$ with respect to the ideals $F_\Lambda^1 \supset F_\Lambda ^2 \supset . . .$ where $F_\Lambda^i := \{\alpha \in \QQ[\Eff(\cX)] \mid \nu(\alpha) \geq i\}.$
% \begin{equation}\label{eq:def-ideals}F_\Lambda^i :=\left \{\sum_{B \in \Eff(\cX)} a_BQ^B \in \QQ[\Eff(\cX)] \;\;\Bigg\vert \;\;\min_{a_B \neq 0} eB(\cL_\theta) \geq i\right\}\end{equation}
\note{Yang uses $\QQ$ coefficients.}
% If $R$ is a complete topological ring with a nonnegative additive valuation $\nu$, the \textit{ring of convergent Laurent series} is defined by
% \[R\{z, z^{-1}\}:= \left\{ \sum_{n \in \ZZ} r_nz^n\;\; \bigg \vert \;\;r_n \in R,\;\; \nu(r_n) \to \infty \;\text{as}\; |n| \to \infty\right\}.\]
Also define the ring of convergent Laurent series
\[\Lambda_{\cX}\{z, z^{-1}\}:= \left\{ \sum_{n \in \ZZ} r_nz^n\;\; \bigg \vert \;\;r_n \in \Lambda,\;\; \nu(r_n) \to \infty \;\text{as}\; |n| \to \infty\right\}.\]
 (see \cite[Sec~3]{CCIT09}). 
 \note{The valuation is nonnegative because the pullback of an ample line bundle is nef.}
We write an element of $\Lambda_\cX$ as an infinite $\QQ$-linear combination of variables $Q^\Beta$ where $\Beta \in \Eff(\cX)$. Hence, elements of $\Lambda_{\cX}\{z, z^{-1}\}$ are formal power series in $z, z^{-1}$ with coefficients in $\Lambda_{\cX}$, such that the coefficient of $Q^\Beta$ is in $\Lambda[z, z^{-1}]$.

The discussion of (co)homology for proper Deligne-Mumford stacks in \cite[Sec~2.2]{AGV08} can be extended to noncompact stacks using Borel-Moore homology. In particular, we define the Borel-Moore homology $H^{\BM}_\bullet(\cX; \QQ)$ to be the Borel-Moore homology of the coarse space $\underline{\cX}$. We use two properties of Borel-Moore homology: it is covariant for proper morphisms, and when $\cX$ is smooth there is a Poincar\'e duality isomorphism
\[
H^i(\underline{\cX}; \QQ) \xrightarrow{\sim} H^{\BM}_{2\dim(\underline{\cX})-i}(\underline{\cX}; \QQ)
\]
(for a proof of duality, see e.g. \cite[Sec~8.2]{HTT}).\note{There are many definitions of Borel-Moore homology. The definition used in [cite Fulton ch 19] agrees with the one used in [cite Yang's ref] by [cite Iversen].} We lift this duality to $\cX$ as in \cite[Sec~2.2]{AGV08}. 

%\Rachel{changed wording here}
The $J$-function is defined as follows. For any set of formal variables $\{t_i\}_{i=1}^N$, and for any $\bt(z) \in H^\bullet_{\CR}(\cX; \QQ)\otimes \Lambda_{\cX} \llbracket t_i, z\rrbracket$ we set
\begin{align}\label{eq:jfunc-def}
J^{\cX}(\bt(z), Q, z) = \one &+ \frac{\bt(-z)}{z} \\
&+ \sum_{\substack{n \geq 0\\B \in \Eff(\cX)}}\frac{Q^B}{n!}  \inv^* ev_{1*}\left[\left( \frac{1}{z(z-\psi_1)} \cup 
\prod_{i=2}^{n+1} ev_i^*(\bt(\psi_i))
% ev_2^*(\bt(\psi_2)) \cup \ldots \cup ev_{n+1}^*(\bt(\psi_{n+1})) 
\right) \cap [\overline{\cM}]^w\right]\notag
\end{align}
where $\inv: \In{\cX} \to \In{\cX}$ is the involution
% $\cK := \cK_{0, n+1}(\cX, B)$ is the moduli of stable maps to $\cX$ as defined in \cite{AGV08}, 
and $[\overline{\cM}]^w$ is the \textit{weighted} virtual class defined in \cite[Sec~4.6]{AGV02} (see also \cite[Sec~2.5.1]{Tseng10}).
The stack $\overline{\cM}$ is proper over $X\sslash_0 G$, as is $\In{\cX}$, and hence the evaluation morphisms are proper. We interpret \eqref{eq:jfunc-def} by applying the cycle map to $[\overline{\cM}]^w$ so that it is an element of $H^{\BM}_\bullet(\overline{\cM}; \QQ)$, capping with the class shown to obtain another element of Borel-Moore homology, applying the proper pushforward $ev_{1*}$, and finally applying Poincar\'e duality on $\In{\cX}$ to obtain an element of $H^\bullet(\In{\cX}; \QQ)$, to which we can apply $\inv^*$.
% \Nawaz{Might be worth having a remark or footnote explaining that our choice of evaluation maps agree with Yang, and are different than that of AGV}

\begin{remark}\label{rmk:different-J}
Our series \eqref{eq:jfunc-def} differs from the series $J^{\cX}(\bt(z), Q, z)$ used in \cite{yang} by a locally constant factor and in the choice of virtual class. Our series differs from the one in \cite[Def~3.1.2]{Tseng10} by a sign, a factor of $z$, and our handling of noncompact $\cX$. These differences are explained in detail in Appendix \ref{sec:compare}. 
% \Nawaz{Is appendix reference also supposed to include A1?} 
\end{remark}

Observe that $J^\cX(\bt(z), Q, z)$ is an element of $H^\bullet_{\CR}(\cX; \QQ) \otimes \Lambda_\cX\llbracket t_i \rrbracket\{z, z^{-1}\}$, and it depends only on the stack $\cX$ and its polarization $\cL_\theta$ (the choice of $\cL_\theta$ affects the Novikov ring). 
% Moreover, all the series $J^\cX(\bt(z), Q, z)$ can be recovered from a single sufficiently generic choice of $\bt(z)$. This special choice of $\bt(z)$ produces the $J$-function of $\cX$ as defined below.

%\Rachel{removed definition of "the J-function"}
% \Rachel{should we just delete this definition?}
% \begin{definition}\label{def:J-func}Let $\{\phi_\alpha\}$ be a basis for $H^\bullet_{\CR}(\cX; \QQ)$ and let $\{\tau^\alpha\}$ be formal variables. 
% % \Rachel{We choose $\bt(z) \in H^\bullet_{\CR}(\cX; \QQ) \otimes \Lambda_X\llbracket \tau^\alpha, z\rrbracket$ to equal $\sum_{\alpha} \tau^\alpha \phi_\alpha$ (it is constant in $z$ and in the Novikov variables).}
% The \emph{$J$-function} of $\cX$ is $J^{\cX}(\sum_{\alpha} \tau^\alpha \phi_\alpha, Q, z)$.
% \end{definition}
% % \Nawaz{Why define the J-function formally? Why not just use this definition with specific $\bt$.}

% The $J$-function clearly determines the series $J^\cX(\bt(z), Q, z)$ for every value of $\bt(z)$. \Rachel{not so clearly, cite coates in the intro. look for J-function, J-series}

\subsubsection{Quasimap $I$-functions}\label{sec:I-func}
Let $X$, $G$, and $\theta$ be as in Section \ref{sec:jfuncs} and assume that $X$ has at worst l.c.i. singularities.\note{need lci because i say qmap theory is defined, later}  We further assume that every $g \in G$ acting with nonempty fixed locus in $X^{ss}_\theta(G)$ is semi-simple and has connected centrilizer---this is so we can apply the results of \cite{Webb21}; see \cite[Sec 1.1.1]{Webb21}.
%\Nawaz{might just refer to relevant part of this paper for the background} \Rachel{fix this}
% Let $X^s(G)$ (resp. $X^{ss}(G)$) denote the loci of stable (resp. semi-stable) points of $X$ with respect to $G$ and $\theta$, defined as in \cite[Def~2.1]{king} (note that we omit $\theta$ from the notation). 
% If $X^s(G) = X^{ss}(G)$ and if this locus is smooth and nonempty, then the stacky GIT quotient $X \sslash G$
% is a smooth proper Deligne-Mumford stack.\todo{check hypotheses} Hence, setting $\cX = X\sslash G$, we may consider the $J$-function $J^\cX$ defined in the previous section. {\color{blue}not redo assumptions}
The $I$-function $I^{X, G, \theta}(q, z)$ depends on the triple $(X, G, \theta)$, not just on $\cX = X\sslash G$, so we call $(X, G, \theta)$ a \textit{quasimap target}. 

We fix notation and conventions. A $k$-pointed, genus-$g$ quasimap to $(X, G, \theta)$ is a representable morphism $q: \cC \to [X/G]$ where $\cC$ is a twisted curve with genus $g$ and $k$ markings, such that $q^{-1}([X^{us}/G])$ is zero-dimensional and disjoint from nodes and markings.\footnote{
Later, in Section \ref{sec:formula} we will define a \textit{quasimap} (with no adjectives). This paper will primarily use quasimaps, rather than $k$-pointed genus-$g$ quasimaps.}\note{our definition follows Yang, not CCK, by requiring base locus disjoint from nodes and markings, so that thm about effective classes later is true} The \textit{class} of $q$ is the homomorphism $\beta: \Pic([X/G]) \to \QQ$ given by $\beta(\cL) = \deg_{\cC}(\cL)$. We denote by $\Eff(X, G, \theta) \in \Hom(\Pic^G(X), \QQ)$ the monoid generated by classes of $k$-pointed, genus-$g$ quasimaps. The Novikov ring $\Lambda_{X, G, \theta}$ is the completion of $\QQ[\Eff(X, G, \theta)]$ defined in analogy with $\Lambda_\cX$, replacing $\QQ[\Eff(\cX)]$ with $\QQ[\Eff(X, G, \theta)]$.
We write an element of $\Lambda_{X, G, \theta}$ as an infinite $\QQ$-linear combination of variables $q^\beta$ for $\beta \in \Hom(\Pic^G(X), \QQ)$. 
% {\color{blue}Why is $\Pic^G(X)$ the same as $\Pic(\cX)$? Do our assumptions always imply this?}

The $I$-function is a series
\begin{equation}\label{eq:Ifunc-shape}
I^{X, G, \theta}(q, z) = \one + \sum_{\substack{\beta \in \Eff(X, G, \theta)\\ \beta \neq 0}} q^\beta I^{X, G, \theta}_\beta(z) \in H^\bullet_{\CR}(\cX; \QQ)\otimes \Lambda_{X, G, \theta}\{z, z^{-1}\}.
\end{equation}
We define the coefficients $I^{X, G, \theta}_\beta(z) \in H^\bullet_{\CR}(\cX; \QQ)\otimes \QQ[z, z^{-1}]$ as in \cite[Def~3.4.4]{Webb21}. (This is the definition in \cite{CCK}, modulo some minor conventions).
\note{Let's sort out all the different kinds of $I$-functions. There are at least four: $I^{CCK}, I^{Yang}, I^{Tseng},$ and $I^{Rachel}$. These are related by ($\varpi$ is the rigidification of $\In{X\sslash G}$
\begin{itemize}
\item $\varpi^*I^{CCK} = I^{Tseng}$ (p.23 of CCK)
\item $\varpi_*I^{Yang} = I^{CCK}$ (sec 1.6 of Yang)
\item $\mathbf{a} \varpi_* I^{Rachel} = I^{CCK}$ (Remark 3.4.5 of nonab2)
\end{itemize}
It follows from 2 and 3 that $I^{Yang} = \mathbf{a} I^{Rachel}$. Since $\varpi^*\varpi_* = 1/\mathbf{a}$ we see that $I^{Rachel} = I^{Tseng}$. This discussion ignores differences in multilplying by $z$ and in using $z$ vs $-z$.}
We give an explicit formula for $I^{X, G, \theta}_\beta(z)$ in Theorem \ref{thm:Ifunc-formula} below; when $G$ is a torus it is the expected hypergeometric series. 
\begin{remark}\label{rmk:different-I}
Our series $I^{X, G, \theta}(q, z)$ differs from the one defined in \cite[Def~1.10.1]{yang} by a locally constant factor. This is discussed in Appendix \ref{sec:compare}.
\end{remark}
% \Nawaz{Moved remark to end, and some wording.} 

\subsubsection{The mirror theorem}\label{sec:mirror-thm}
Let $X$, $G$, and $\theta$ be as in Section \ref{sec:jfuncs} and assume that $X$ has at worst l.c.i. singularities. Set $\cX  = X\sslash G$.
%\Nawaz{repeat assumptions again? This is use for rest of section no?}
There is a monoid homomorphism $\iota: \Eff(\cX) \to \Eff(X, G, \theta)$ induced by 
the composition
\begin{equation}\label{eq:def-iota}
H_2^{\mathrm{alg}}(\cX; \QQ) \xrightarrow{\iota_1} \Hom(\Pic(\cX), \QQ) \xrightarrow{\iota_2} \Hom(\Pic([X/G]),\QQ),
\end{equation}
where $H^{\mathrm{alg}}_2(\cX; \QQ)$ is the subspace of $H_2(\cX; \QQ)$ spanned by classes of algebraic curves.
The homomorphism $\iota$ induces a ring homomorphism $\Lambda_\cX \to \Lambda_{X, G, \theta}$,
% and likewise a homomorphism of symplectic spaces $\cH_{\cX} \to \cH_{X, G, \theta}$. 
hence it makes sense to compare the $I$-function and the $J$-function of $\cX$ under $\iota$. 

To state this comparison, we define the mirror map $\mu(z) \in H^\bullet_{\CR}(\cX; \QQ)[z]$ as
\[
\mu(q, z) :=[zI(q, z)-z]_+
\]
 where $[\cdot]_+$ is the truncation 
 %of $\cdot$ 
 to nonnegative powers of $z$. The elegant mirror theorem of \cite[Thm~1.12.2]{yang} states
\begin{equation}\label{eq:mirror-thm}
J^{\cX}(\mu(q, -z), \iota(Q), z) = I^{X, G, \theta}(q, z).
\end{equation}

% This paper is about using \eqref{eq:mirror-thm} to compute restrictions of the $J$-function in Definition \ref{def:J-func}. For example, if $\mu(q, -z)=0$, then we can only use \eqref{eq:mirror-thm} to compute the restriction of the $J$-function to $\tau^\alpha=0$.

% We will present some examples where $\mu$ is not zero. 

\begin{remark}
As noted in Remarks \ref{rmk:different-J} and \ref{rmk:different-I}, our definitions of the series $J^\cX$ and $I^{X, G, \theta}$ are different from those used in \cite{yang}. In Section \ref{sec:pf-of-mirror} we derive \eqref{eq:mirror-thm} from \cite[Thm~1.12.2]{yang}.
\end{remark}
\begin{remark}\label{rmk:extend-pic}

%\Nawaz{minor detail; what do we mean by compute the J-function? Inverting the mirror map? We also define J-function to be big J restricted to a generic cohomolgoy class, so saying J for a further restriction is maybe confusing}  \Rachel{edited. better?}

To use \eqref{eq:mirror-thm} to study $J^{\cX}(\bt, Q, z)$, one would like $\iota$ to be injective. The morphism $\iota_1$ is injective if $X\sslash G$ is a vector bundle over a stack $\cB$ with projective coarse moduli. To see this, suppose $C_1$ and $C_2$ are numerically equivalent cycles on $X\sslash G$.\note{a priori we're assuming numerical equivalence on $\cX$, meaning they define the same hom on $\Pic(\cX)$, but this means they define the same hom on the coarse group $\Pic(\underline{\cX})$.} Then their pushforwards on $\cB$ are also numerically equivalent.\note{ numerical equivalence is defined by intersections w/cartier divisors, aka line bundles, and these pull back.} Since $\cB$ has projective coarse moduli, by \cite[Prop~14]{kollar} the images of $[C_1]$ and $[C_2]$ in $H_2(\cB; \QQ)$ are equal. But pushforward defines an isomorphism $H_2(X\sslash G; \QQ) \to H_2(\cB, \QQ)$, so $[C_1]=[C_2]$ in $H_2(X\sslash G; \QQ).$

The morphism $\iota_2$ is injective if $\Pic([X/G]) \to \Pic(\cX)$ is surjective. Since $\Pic([X/G]) = \Pic^G(X)$ and $\Pic(\cX) = \Pic^G(X^{ss}(G))$, the question is about extending $G$-equivariant line bundles from $X^{ss}(G)$ to $X$. A sufficient condition is 
\[ %\label{eq:codim-2}
X \;\text{is locally factorial and}\; X \setminus X^{ss}(G)\;\text{ has codimension at least 2}
\]
(one may extend first the line bundle and then the $G$-equivariant data by Hartog's).\note{for extending line bundles see \url{https://mathoverflow.net/questions/315752/extension-of-line-bundle-defined-over-an-open-subscheme} for extending G-equivariance see the last comment here \url{https://math.stackexchange.com/questions/1969391/equivariant-hartogs-extending-g-equivariant-vector-bundle-to-codimension-2}. note you are extending a morphism of line bundles $a^*L \to f^*L$, whcih is the same as extending a section of $(a^*L)^\vee \otimes p^*L$, and Hartog's lemma lets you extend sections of vector bundles. Here $a, p: G \times X \to X$ are the action and projection maps.} This condition is not necessary: see Remark \ref{rmk:embed}
%Corollary \ref{cor:embed} \Rachel{Remark \ref{rmk:embed}}. 
\note{However, if \eqref{eq:codim-2} holds, then one can show that the extension of $G$-equivariant line bundles is unique,\note{the line bundle extends uniquely by \url{https://mathoverflow.net/questions/22111/extending-vector-bundles-on-a-given-open-subscheme?rq=1}, since $X$ being locally factorial implies normal implies S2 by \url{https://en.wikipedia.org/wiki/Serre\%27s_criterion_for_normality}. The equivariant data is equivalent to a section of a certain line bundle (see other comment), which is like a map to a separated scheme $\AA^1$ and $X$ is a variety hence reduced, so this section extends uniquely.} so $\iota$ is an isomorphism in this case.}
% To use \eqref{eq:mirror-thm} to compute the $J$-function, one would like $\iota$ to be injective. This is true if $\Pic([X/G]) \to \Pic(\cX)$ is surjective. Since $\Pic([X/G]) = \Pic^G(X)$ and $\Pic(\cX) = \Pic^G(X^{ss}(G))$, the question is about extending $G$-equivariant line bundles from $X^{ss}(G)$ to $X$. A sufficient (but not necessary) condition is that $X$ is locally factorial and the unstable locus $X \setminus X^{ss}(G)$ has codimension at least 2 (one may extend first the line bundle and then the $G$-equivariant data by Hartog's).\note{for extending line bundles see \url{https://mathoverflow.net/questions/315752/extension-of-line-bundle-defined-over-an-open-subscheme} for extending G-equivariance see the last comment here \url{https://math.stackexchange.com/questions/1969391/equivariant-hartogs-extending-g-equivariant-vector-bundle-to-codimension-2}. note you are extending a morphism of line bundles $a^*L \to f^*L$, whcih is the same as extending a section of $(a^*L)^\vee \otimes p^*L$, and Hartog's lemma lets you extend sections of vector bundles. Here $a, p: G \times X \to X$ are the action and projection maps.}
\end{remark}

\subsection{$I$-function formula}\label{sec:formula}
Let $X$, $G$, and $\theta$ be as in Section \ref{sec:jfuncs}, and assume moreover that $X$ has at worst l.c.i. singularities.
% \Nawaz{repeated again; Can we set the assumptions for the whole section?}
%We summarize the results from \cite{CCK, Webb21} that give in many cases an explicit formula for the $I$-function $I^{X, G, \theta}(q, z)$.

Let $\AA^{2}_{u, v} := \Spec(\CC[u, v])$. 
For $a$ a positive integer, let $\wP{a}$ be the weighted projective space given by $\AA^2_{u, v}\setminus\{0\}$ modulo $\Gm$, where $\Gm$ acts with weights $(a, 1)$ (note that $v=0$ is the unique stacky point).
A \emph{quasimap} to $(X, G, \theta)$ is a representable morphism $q: \wP{a} \to [X/G]$ such that the substack $q^{-1}([X^{us}/G])$ is disjoint from the stacky point $v=0$ in $\wP{a}$.
%\Nawaz{Is $X^{us}$ standard notation? Should we say $[X/G] \setminus \mathcal{X}?$ or something like that?}\Rachel{Better? (I've seen $X^{us}$ used elsewhere)}
%, and (2) $q$ is fixed by the $\Gm$ action on $\wP{a}$ induced by $\lambda \cdot (u, v) = (\lambda^a u, v)$.

\begin{remark}
Every quasimap determines a $1$-pointed genus-0 quasimap (as defined in Section \ref{sec:I-func}), and the class $\beta$ of a quasimap is an element of $\Eff(X, G, \theta)$. 
\end{remark}
 
 Let $T\subset G$ be a maximal torus. Note that $\theta$ restricts to a character of $T$, which we also denote $\theta$.
Finally, let $E$ be a $G$-representation and let $s$ be a $G$-equivariant section of the vector bundle $E \times X \to X$ with $Y \subset X$ the zero locus of $s$. We allow the situation $E = \Spec(\CC)$, in which case $Y=X$. 

\subsubsection{Geometry}
We have the following commuting diagrams relating $Y\sslash G$ and $X\sslash T$ (resp. $[Y/G]$ and $[X/T]$):
\[
\begin{tikzcd}
Y\sslash_G T\arrow[r, hook] \arrow[d, "\varphi"]& X\sslash_G T \arrow[r, hook, "j"]\arrow[d] & X\sslash T &&{[Y/T]} \arrow[r] \arrow[d] & {[X/T]} \arrow[d] \\
Y\sslash G \arrow[r, hook, "\bi"] & X\sslash G &&& {[Y/G]} \arrow[r] & {[X/G]}
\end{tikzcd}
\]
In the left diagram, $j$ is an open immersion, $\bi$ is a closed immersion, the square is fibered, and $\varphi$ is flat (in fact, a $G/T$ fiber bundle). 
An analogous diagram holds for the inertia stacks of these spaces. The right diagram induces the following commuting squre.
\[
\begin{tikzcd}
{\Hom(\Pic^T(Y), \QQ)} \arrow[r, hook, "r^{Y, T}_{X, T}"] \arrow[d, "r^{Y, T}_{Y, G}"]& {\Hom(\Pic^T(X), \QQ)} \arrow[d, "r^{X, T}_{X, G}"] \\
\Hom(\Pic^G(Y), \QQ) \arrow[r, hook, "r^{Y, G}_{X, G}"] & \Hom(\Pic^G(X), \QQ)
\end{tikzcd}
\]

\subsubsection{Notation}

Observe that $\Pic^T(X)$ contains the character group $\chi(T)$ via the assignment $\xi \mapsto \cL_{\xi}$. Let $\tilde \beta \in \Hom(\Pic^T(X), \QQ)$. For notational ease, we will write $\tilde \beta(\xi)$ for $\tilde \beta(\cL_{\xi})$. 
There is a group homomorphism
$\Hom(\Pic^T(X), \QQ) \to T$ sending
$\tilde \beta $ to $g_{\tilde \beta}$,
where $g_{\tilde \beta}$ is defined by the property that for $\xi \in \chi(T)$,
\begin{equation}\label{eq:def-g}
\xi(g_{\tilde \beta}) = e^{2\pi i \tilde \beta(\xi)}.
\end{equation}

Let $\xi_1, \ldots, \xi_n$ (resp. $\epsilon_1, \ldots, \epsilon_r$) denote the weights of $X$ (resp. $E$) with respect to $T$. For $\tilde \beta \in \Hom(\chi(T), \QQ)$ we call $\tilde \beta$ \textit{I-nonnegative} if the set 
\[
\{ \;\epsilon \in \{\epsilon_j\}_{j=1}^r \;\mid \;\tilde \beta(\epsilon) \in \ZZ_{<0} \;\}
\]
is empty. We define $X^{\tilde \beta}$ to be the subspace
\begin{equation}\label{def:x-tilde-beta-intro}X^{\tilde \beta}=\{(x_1, \ldots, x_n) \in X \mid x_i = 0 \;\text{for all}\; i \;\text{such that}\; \tilde \beta(\xi_i) \not \in \ZZ_{\geq 0}\}.
\end{equation} 
Observe that $X^{\tilde \beta}$ is contained in the fixed locus of $g_{\tilde \beta}^{-1}$, so we can use \eqref{eq:def-g} and Example \ref{ex:inertia} to define substacks
\[
F^0_{\tilde \beta}(X\sslash T) := X^{\tilde \beta} \sslash_G T \subset \In{X\sslash_G T}_{g_{\tilde \beta}^{-1}} \quad \quad \quad F^0_{\tilde \beta}(Y\sslash T) := (Y \cap X^{\tilde \beta}) \sslash_G T \subset \In{Y\sslash_G T}_{g_{\tilde \beta}^{-1}}.
\]
For $\xi \in \chi(T)$ and $\tilde \beta \in \Hom(\chi(T), \QQ)$ we define operational Chow classes on $Y\sslash_G T$ by
\begin{align*}
C^\circ(\tilde\beta, \xi) 
&:= \begin{cases}
\prod_{\tilde \beta(\xi) < k < 0,  \;k-\tilde\beta(\xi) \in \ZZ}^{}(c_1(\L_{\xi}) + kz) & \tilde\beta(\xi) \leq 0\\
\left[\prod_{0 < k \leq \tilde\beta(\xi),  \;k-\tilde\beta(\xi) \in \ZZ}^{}(c_1(\L_{\xi}) + kz)\right]^{-1} & \tilde\beta(\xi) >0.
\end{cases}\\
C(\tilde\beta, \xi) 
&:= \begin{cases}
c_1(\L_{\xi})C^\circ(\tilde\beta, \xi) & \tilde\beta(\xi) \in \ZZ_{< 0}\\
C^\circ(\tilde\beta, \xi) & else.
\end{cases}\\
\end{align*}
% For a more detailed explanation of the notation, see \cite[Sec 1.1]{Webb21}.
%[cite nonab2 sec 1.1.]

% \todo{is checking codimension enough to show that excess bundle vanishes? I think so . . . but I'm confused by the example of $\cO \xrightarrow{\cdot x} \cO$ on $\AA^1$. I guess this isn't a subbundle because it isn't injective on the 0 fiber. but it is a monomorphism of sheaves? }\

\subsubsection{Formula}\label{sec:I1} 

We now restrict to the situation where $X = \AA^n$ for some $n$. 
%\Nawaz{Minor point: $\AA^n$?} \Rachel{Yes, please just change these}
If $E$ has rank $r$, then a section $s$ of the bundle $E \times X \to X$ determines (after a choice of basis) a sequence of $r$ elements of $\Gamma(X, \OO_X)$. We say $s$ is \textit{regular} if this sequence is regular.
% To define the quasimap theory of $X\sslash T$ and $Y\sslash G$ and to relate their quasimap $I$-functions, we make the following assumptions: 
Our $I$-function formula requires the following assumptions.
%\Nawaz{We make the following assumptions to ensure that the quasimap theories of $X\sslash T$ and $Y \sslash G$ are well-defined, and can be compared} \Nawaz{Something about this assumption section feels like it can be better phrased}\Rachel{better?}
\begin{enumerate}
\item $G$ is connected
\item $X^s_\theta(G)=X^{ss}_\theta(G)$ and $X^s_\theta(T)=X^{ss}_\theta(T)$, and these are both nonempty
\item If $g \in \Stab_{G}(X^s(G))$ then $g$ is semi-simple (meaning it is contained in a maximal torus), and 
the centrilizer $Z_G(g)$ is connected.
% \Nawaz{Could remove nested list} 
\item The section $s$ is regular\note{Observe that this implies $Y$ has at worst l.c.i. singularities} and $X^s(G) \cap Y$ is smooth.
\end{enumerate}
Observe that if $G$ is abelian, then $G=T$ (by assumption (1)) and condition (3) is automatically satisfied.

\begin{theorem}[Formula for GIT $I$-functions \cite{Webb21}]\label{thm:Ifunc-formula}
Let $X=\CC^n, G, \theta, E, s$, and $Y$ be as above, and assume that these satisfy assumptions (1)-(4). Let $\xi_1, \ldots, \xi_n$ (resp. $\epsilon_1, \ldots, \epsilon_r$) denote the weights of $X$ (resp. $E$) with respect to $T$, and let $\rho_1, \ldots, \rho_m$ be the roots of $G$. If $\beta \in \Hom(\chi(G), \QQ)$ then there are $I_{\tilde\beta}(z)$ such that 
% \Rachel{changed $X$ below to $Y$}
\begin{equation}\label{eq:Ifunc-formula}
\sum_{\delta \mapsto \beta} \varphi^* I^{Y, G, \theta}_\delta(z) = \sum_{\tilde \beta \mapsto \beta} I_{\tilde \beta}(z) \quad \text{where} \quad I_{\tilde \beta}(z) \in H^\bullet(\In{Y\sslash_G T}_{g_{\tilde \beta}^{-1}}; \QQ)\otimes \QQ[z, z^{-1}]
\end{equation}
where the first sum is over $\delta \in (r^{Y, G}_{X, G})^{-1}(\beta)$ and the second sum is over $\tilde \beta \in (r^{X, T}_{X, G})^{-1}(\beta)$. We have a formula for $I_{\tilde \beta}(z)$ in the following cases:
\begin{itemize}
\item If $\In{Y\sslash_G T}_{g_{\tilde \beta}^{-1}}$ is empty then $I_{\tilde \beta}(z)=0$.
More generally, the series $I_{\tilde \beta}(z)$ is supported on $F^0_{\tilde \beta}(Y\sslash T) \subset \In{Y\sslash_G T}_{g_{\tilde \beta}^{-1}}$. 
\item If $\tilde \beta \in \Hom(\chi(T), \QQ)$ is $I$-nonnegative, we have
\begin{equation}\label{eq:Ifunc-coeff-convex}
I_{\tilde \beta}(z) = \left( \prod_{i=1}^m C(\tilde \beta, \rho_i)^{-1}\right)
\left(\prod_{j=1}^r C(\tilde \beta, \epsilon_j)^{-1}\right)
\left(\prod_{\ell=1}^{n}C(\tilde \beta, \xi_\ell)\right)
\one_{g_{\tilde \beta}^{-1}} 
\end{equation}
where $\one_{g_{\tilde \beta}^{-1}}$ is the fundamental class of the sector $\In{Y\sslash_G T}_{g_{\tilde \beta}^{-1}}$.
\item If the inclusion $F^0_{\tilde \beta}(Y\sslash T) \hookrightarrow F^0_{\tilde \beta}(X\sslash T)$ is l.c.i. of codimension 
\[\# \{\epsilon_j \mid \tilde \beta(\epsilon_j) \in \ZZ_{\geq 0} \},\] 
then \eqref{eq:Ifunc-coeff-convex} holds after replacing $C(\tilde \beta, \epsilon_j)^{-1}$ by $C^\circ(\tilde \beta, \epsilon_j)^{-1}$, replacing $C(\tilde \beta, \xi_j)^{-1}$ by $C^\circ(\tilde \beta, \xi_j)^{-1}$, and replacing $\one_{g_{\tilde \beta}^{-1}}$ by $[F^0_{\tilde \beta}(Y\sslash T)]$.
% \item In general, $\sum_{\beta \mapsto \delta} \varphi^* I^{Y, G, \theta}_\beta(z)$ is supported on sectors $\In(Y\sslash_G T)_{g_{\tilde \delta}^{-1}}$ with $r^{X, T}_{X, G}(\tilde \delta) = \delta$, and an explicit formula can be found in \cite{nonab2}.\Rachel{the last situation is used to study vanishing conditions for the $I_\delta$.}
\end{itemize}
\end{theorem}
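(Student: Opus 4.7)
The plan is to reduce the statement directly to results already proved in \cite{Webb21}, repackaged in the notation set up in this section. First, I would recall that the $I$-function coefficients $I^{Y,G,\theta}_\delta(z)$ are defined by virtual localization residues on a moduli space of quasimaps $\wP{a} \to [Y/G]$ of class $\delta$, where only the stacky point $v=0$ is allowed to map to $[Y^{us}/G]$. The evaluation of the residue at $v=0$ into $\In{Y\sslash G}$ is what produces the cohomological coefficient.

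The first major step is the nonabelian-to-abelian reduction, which is the reason $\varphi^*$ appears on the left of \eqref{eq:Ifunc-formula}. Writing a class $\beta \in \Hom(\chi(G), \QQ)$ as the image of $T$-lifts $\tilde\beta$, one compares the $G$-quasimap moduli with the $T$-quasimap moduli through the $G/T$-bundle $\varphi$; the discrepancy is an Euler factor for the bundle of roots, giving the product $\prod_i C(\tilde\beta, \rho_i)^{-1}$ in \eqref{eq:Ifunc-coeff-convex}. The sum over $\tilde\beta \in (r^{X,T}_{X,G})^{-1}(\beta)$ corresponds to the sum over $T$-quasimap classes lying over a single $G$-class, and the sum over $\delta$ matches these after pushforward along $\varphi$. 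This is precisely the content of the abelianization theorem in \cite{Webb21}, so my task reduces to writing down $I_{\tilde\beta}(z)$ for each $T$-quasimap class $\tilde\beta$.

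Next I would carry out the $T$-quasimap calculation. For $X=\AA^n$, the space of quasimaps $\wP{a} \to [X/T]$ of class $\tilde\beta$ is the product of vector spaces $\bigoplus_\ell H^0(\wP{a}, \cL_{\xi_\ell})$, with a natural $\Gm$-action from rescaling $v$. The $\Gm$-fixed locus over $v=0$ is $X^{\tilde\beta} \sslash_G T = F^0_{\tilde\beta}(X\sslash T)$, reflecting that only coordinates $\xi_\ell$ with $\tilde\beta(\xi_\ell) \in \ZZ_{\geq 0}$ are permitted to be nonzero at the fixed point; this immediately explains both the support statement and the identification of the sector via $g_{\tilde\beta}^{-1}$ through \eqref{eq:def-g}. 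Applying virtual localization (the Atiyah-Bott residue) and computing the Euler class of the virtual normal bundle on $\wP{a}$ weight-by-weight produces exactly the product $\prod_\ell C(\tilde\beta, \xi_\ell)$, because each weight $\xi_\ell$ contributes the ratio of Euler classes of the two pieces of $H^\bullet(\wP{a}, \cL_{\xi_\ell})$ that are relevant, and this ratio is the hypergeometric factor $C(\tilde\beta, \xi_\ell)$.

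Finally I would pass from $X$ to the complete intersection $Y = Z(s)$. The section $s$ of $E \times X \to X$ induces, on each $T$-quasimap moduli, a section of the obstruction complex $R\pi_\ast f^* E$ whose zero locus is the corresponding moduli of quasimaps to $[Y/T]$. When $\tilde\beta$ is $I$-nonnegative, $R^1\pi_\ast f^*E = 0$ at the fixed locus, and a quantum Lefschetz argument inserts the Euler factor $\prod_j C(\tilde\beta, \epsilon_j)^{-1}$, giving \eqref{eq:Ifunc-coeff-convex}. In the non-convex case, $R^1\pi_\ast f^*E$ is nonzero but is a vector bundle on the fixed locus; the hypothesis that $F^0_{\tilde\beta}(Y\sslash T) \hookrightarrow F^0_{\tilde\beta}(X\sslash T)$ is l.c.i. of the stated codimension exactly says that the $H^0$-contributions not cancelled by $H^1$-contributions cut out the $Y$-fixed locus transversally. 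This is the step I expect to be the main obstacle in a self-contained write-up: one must verify that the residual contribution of $E$ is captured by the $C^\circ$ variants (which drop the factors indexed by $k=0$, i.e., the integer weights contributing to both $H^0$ and $H^1$) and that after accounting for the class $[F^0_{\tilde\beta}(Y\sslash T)]$ the remaining identity is a bookkeeping exercise; in the present paper this is handled by citing the corresponding theorem of \cite{Webb21} verbatim.
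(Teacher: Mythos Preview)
Your proposal is essentially correct in spirit, and you have already identified the key point at the end: the present paper does \emph{not} give a proof of this theorem. Theorem~\ref{thm:Ifunc-formula} is stated here as a quotation of the main result of \cite{Webb21}, with no proof supplied; the paper simply records the statement in the notation needed for the applications in Sections~\ref{sec:git-pres}--\ref{sec:examples}. So there is no ``paper's own proof'' to compare against.

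That said, the outline you give is a faithful summary of the strategy actually carried out in \cite{Webb21}: (i) use the abelianization theorem to pass from $G$ to $T$, producing the root factors $\prod_i C(\tilde\beta,\rho_i)^{-1}$ and the sum over $T$-lifts $\tilde\beta$; (ii) compute the abelian $I$-function by virtual localization on the quasimap moduli for $[X/T]$, which yields the hypergeometric factors $\prod_\ell C(\tilde\beta,\xi_\ell)$ and identifies the support $F^0_{\tilde\beta}$ and the sector $g_{\tilde\beta}^{-1}$; (iii) handle the complete intersection $Y=Z(s)$ via a twisted/quantum Lefschetz argument, with the $I$-nonnegative case giving the clean formula \eqref{eq:Ifunc-coeff-convex} and the general case requiring the l.c.i. hypothesis on $F^0_{\tilde\beta}(Y\sslash T)\hookrightarrow F^0_{\tilde\beta}(X\sslash T)$ to replace $C$ by $C^\circ$ and $\one_{g_{\tilde\beta}^{-1}}$ by $[F^0_{\tilde\beta}(Y\sslash T)]$. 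Your identification of step (iii) as the most delicate point is accurate. If you were to write this up as a self-contained proof rather than a citation, the main technical burden would be precisely the non-convex twisted theory computation; everything else is standard once \cite{Webb21} is available.
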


%\Nawaz{TODO: come back and look at phrasing of theorem statement. RETURNED: reading this theorem, it's hard for me to make sensee what the $I$-function formula is. The first bullet point seems more like a comment, and the second and third seem to actualy be the formula?}

We note that if $E$ has rank zero then the $I$-nonnegativity condition is always satisfied. 
%We expect that the second (nonconvex) condition in Theorem \ref{thm:Ifunc-formula} holds when $Y$ is defined by a ``sufficiently generic'' section $s$.

The formulas in Theorem \ref{thm:Ifunc-formula} are still not completely explicit in the sense that many of the coefficients $I^{Y, G, \theta}_\delta(z)$ (or the summands $I_{\tilde \beta}(z)$) will vanish for reasons not apparent from these formulas. We will return to this problem in Section \ref{sec:computing-I-effective-classes}.

\section{GIT presentations}\label{sec:git-pres}
As explained in \cite{Jiang08}, a toric Deligne-Mumford stack may be described non-uniquely by an {extended stacky fan}.
The mirror theorem for toric stacks in \cite{CCIT15} gives a formula for an $I$-function determined by the extended stacky fan, rather than the underlying stack. The extra data provided by a larger stacky fan can give more information about the Gromov-Witten invariants of the underlying stack (see \cite{CCIT19}).
Under mild assumptions there is a bijection between extended stacky fans and GIT quotients $X\sslash_\theta T$ \cite[Sec~4.2]{CIJ18}. Under this correspondence, extending the stacky fan corresponds to changing the GIT data (see \cite[Rem~9]{CCIT19}). We now expand this story to include $X\sslash_\theta G$ for nonabelian $G$.

% Recall the extended stacky fan construction of [???] that produces alternate presentations of toric stacks. In this section we generalize that construction to ``extended GIT presentations'' that produce alternate presentations of stacky GIT quotients. If $\cX$ is a stacky GIT quotient, the advantage of an extended presentation for $\cX$ is that it embeds $\cX$ in an ambient stack with a larger Picard group. The map creates canonical ``coordinates'' for more line bundles on $\cX$?
% \todo{explain better . . . probably put this elsewhere}

\begin{definition}\label{def:GIT}A \emph{GIT presentation} is a tuple $(X, G, \theta)$ where $X$ is equal to $\AA^n_{\CC}$ for some $n$, $G$ is a connected complex reductive algebraic group acting linearly on $X$, 
% \todo{$X$ definitely needs to be a REPRESENTATION, meaning that $G$ acts linearly. Should we also require that the representation is $\CC$-linear, meaning that $G \to GL(X)$ sends $\CC$ to $\CC$?}
and $\theta$ is a character of $G$, such that the assumptions (1)--(3) in Section \ref{sec:I1} are satisfied for some choice of maximal torus. 
 If $\cX$ is a separated and finite type Deligne-Mumford stack, we say $(X, G, \theta)$ is a \emph{GIT presentation for $\cX$} if it is a GIT presentation and $\cX \simeq X\sslash_\theta G$.
\end{definition}
The definition of a GIT presentation $(X, G, \theta)$ for $\cX$ is chosen so that Theorem \ref{thm:Ifunc-formula} yields a formula for the small quasimap I-function of $\cX = X \sslash G$. 
\begin{remark}
If $(X, G, \theta)$ is a GIT presentation, then the assumptions (1)--(3) in Section \ref{sec:I1} are in fact satisfied for \textit{every} choice of maximal torus $T \subset G$. To justify this one must check that if $T, T' \subset G$ are two maximal tori then $X^s(T) = X^{ss}(T)$ implies $X^{s}(T') = X^{ss}(T')$. This is a straightforward computation using the numerical criterion in \cite[Prop~2.5]{king} and the fact that $T = gT'g^{-1}$ for some $g \in G$.
% \Nawaz{Assumptions 1-2 you mean? 3 involves the complete intersection. Also these assumptions often referenced, might want to give them a name}
\end{remark}

% \todo{example: two GIT presentations for $\PP(1,1,3)$. Reference Jiang in here.}

% \todo{example: GIT presentations for global quotients by abelian groups always exist, and here's how to find them. We include this example for completeness even though it is not new or necessary to the new content of this article.}
% \Nawaz{With the above definition, is this true? Every abelian quotient can be written as $[\AA^n/G]?$ for some $G$? Projective examples I can believe, but what about non-projective quotients?}

\subsection{Extended GIT presentations}
Suppose $(X, G, \theta)$ is a GIT presentation for $\cX = X\sslash_\theta G$. The following is a way to build other GIT presentations of $\cX$.

\begin{proposition}\label{prop:extend-presentation}
Let $(X, G, \theta)$ be a GIT presentation for $X\sslash_\theta G$ and let $\nu$ be a 1-parameter subgroup of $GL(X)$ that commutes with the image of $G$ in $GL(X)$. Assume moreover that there is an integer $r \geq 1$ and a 1-parameter subgroup $\nu': \Gm \to G$ such that
\begin{equation}\label{eq:limit}
\lim_{t \rightarrow 0} \nu'(t^s)\nu(t^{-rs}) \cdot x \;\;\;\;\text{exists}
\end{equation}
for every $x\in X$ and $s\geq 0$. Then there is a GIT presentation $(X\times \AA^1, G \times \Gm, \vartheta)$ of $X\sslash_\theta G$, where $G \times \Gm$ acts on $X \times \AA^1$ by 
\begin{equation}\label{eq:extended-action}
(g, \gamma) \cdot (x, y) = (g\nu(\gamma)x, \gamma y).\end{equation}
for $(g, \gamma) \in G \times \Gm$ and $(x, y) \in X\times \AA^1$, and $\vartheta$ is equal to the character
\begin{equation}\label{eq:def-vartheta}\vartheta_N: G\times \Gm \rightarrow \Gm \quad \quad \quad \quad \vartheta_N(g, \gamma) = \theta(g) \gamma^N.
\end{equation}
for any $N > \langle \theta, \nu' \rangle/r$.
\end{proposition}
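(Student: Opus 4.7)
The plan is to identify the quotient stack $(X \times \AA^1) \sslash_{\vartheta_N} (G \times \Gm)$ directly with $X \sslash_\theta G$, which amounts to showing
\[
(X \times \AA^1)^{ss}_{\vartheta_N}(G \times \Gm) = X^{ss}_\theta(G) \times (\AA^1 \setminus \{0\}).
\]
Once this is established, the $\Gm$-torsor $\AA^1 \setminus \{0\} \to \Spec(\CC)$ pulls back to a $\Gm$-torsor over $X^{ss}_\theta(G)$ whose total space carries a $G \times \Gm$-action (using that $\nu$ commutes with $G$); passing to quotient stacks gives the required isomorphism. The other conditions of Definition \ref{def:GIT} will follow easily.

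The set-theoretic identification of the semistable locus is a Hilbert--Mumford calculation. To show that $(x, 0)$ is $\vartheta_N$-unstable, apply the 1-parameter subgroup $\lambda_s(t) = (\nu'(t^s), t^{-rs})$: the limit $\lim_{t \to 0} \lambda_s(t) \cdot (x, 0)$ exists by hypothesis \eqref{eq:limit}, while $\langle \vartheta_N, \lambda_s \rangle = s(\langle \theta, \nu' \rangle - rN) < 0$ for $s > 0$ by the choice of $N$. Using the $\Gm$-action one reduces the remaining case $y \neq 0$ to showing that $(x, 1)$ is $\vartheta_N$-semistable if and only if $x$ is $\theta$-semistable; the forward direction is immediate by restricting to 1-PS's of the form $(\lambda_G, 1)$.

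The main obstacle is the reverse direction: a general 1-PS $(\lambda_G(t), t^a)$ with $a > 0$ moves $x$ by $\lambda_G(t) \nu(t^a)$, but $\nu$ need not lie in any torus of $G$. To handle this, I would invoke \cite[Prop~2.5]{king} to reduce semistability for $G \times \Gm$ to semistability for each maximal torus $T \times \Gm$, and then conjugate $\nu'$ into the chosen $T$; this conjugation preserves hypothesis \eqref{eq:limit} and the value $\langle \theta, \nu' \rangle$. Once $\lambda_G, \nu' \in T$ and $\nu$ commutes with $T$, the auxiliary 1-PS $\tilde \mu(t) := \lambda_G(t^r) \nu'(t^a)$ of $T$ factors as $\rho(t^r) \cdot \mu_a(t)$, where $\rho(t) = \lambda_G(t) \nu(t^a)$ and $\mu_a(t) = \nu'(t^a) \nu(t^{-ar})$. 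Decomposing $X$ into simultaneous weight spaces for $T \cdot \nu(\Gm)$, both factors act on $x$ with nonnegative exponents (the first by the existence of $\lim \lambda_G(t) \nu(t^a) x$, the second by \eqref{eq:limit}), so $\tilde\mu(t) x$ has a limit. Applying $\theta$-semistability of $x$ then gives $r\langle \theta, \lambda_G \rangle + a \langle \theta, \nu' \rangle \geq 0$, and combined with $N > \langle \theta, \nu' \rangle / r$ this yields $\langle \vartheta_N, (\lambda_G, t^a) \rangle = \langle \theta, \lambda_G \rangle + aN > 0$, as required.

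It remains to verify the auxiliary conditions of a GIT presentation for $(X \times \AA^1, G \times \Gm, \vartheta_N)$. Connectedness and reductiveness of $G \times \Gm$, and the linearity of the action \eqref{eq:extended-action}, are immediate. The same torus-level analysis applied to $T \times \Gm$ transfers the equality $X^s_\theta(T) = X^{ss}_\theta(T)$ (and its nonemptiness) to the new presentation. Finally, since $\gamma y = y$ with $y \neq 0$ forces $\gamma = 1$, the stabilizer of $(x, y)$ in $G \times \Gm$ equals $\Stab_G(x) \times \{1\}$, so the semisimplicity and centralizer-connectedness conditions carry over from $(X, G, \theta)$.
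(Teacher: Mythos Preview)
Your proposal is correct and follows essentially the same route as the paper: identify the semistable locus $(X\times\AA^1)^{ss}_{\vartheta_N}(G\times\Gm) = X^{ss}_\theta(G)\times\CC^*$ via the Hilbert--Mumford criterion (this is the paper's Lemma~\ref{lem:check-ss}), and then pass to quotient stacks (the paper's Lemma~\ref{lem:stack-iso}). Your destabilizing subgroup for $(x,0)$ and your factorization $\tilde\mu(t)=\rho(t^r)\mu_a(t)$ match the paper's argument exactly.

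Two small presentational differences are worth noting. First, you are more explicit than the paper about conjugating $\nu'$ into the maximal torus containing $\lambda_G$; the paper writes ``$\lambda(t^r)\nu'(t^s)$ is a 1-parameter subgroup of $G$'' without commenting on why $\lambda$ and $\nu'$ commute, so your treatment here is cleaner. Second, to show $\lim_{t\to 0}\tilde\mu(t)x$ exists, you decompose $X$ into simultaneous weight spaces for $T$ and $\nu(\Gm)$ and add the (nonnegative) exponents, whereas the paper argues via the triangle inequality using that $\nu'(t^s)\nu(t^{-rs})$ is a contraction for $|t|<1$; your argument is more algebraic but equivalent. Finally, you should make explicit that the $a=0$ case yields equality $\langle\vartheta_N,(\lambda_G,1)\rangle=0$ only when $\lambda_G$ acts trivially (using $X^s_\theta(G)=X^{ss}_\theta(G)$), so that the new semistable and stable loci coincide; this is implicit in your last paragraph but deserves a sentence.
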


\begin{remark}\label{rmk:nu}
Since $G$ is reductive, $X$ can be written as a direct sum of irreducible representations $X_1, \ldots, X_k$. Requiring $\nu(t)$ to commute with the image of $G$ in $GL(X)$ is equivalent to requiring each $X_i$ to be contained in a weight space of $\nu$.\note{proof:If each $X_i$ is contained in a weight space it is easy to see that $G$ commutes with $\Gm$ acting on $X$. Conversely, if the actions commute, then $\nu(t)$ is an automorphism of the $G$-representation $X$, so $\nu(t)$ permutes the $X_i$ for each $t$. Since $\nu(t)$ is a continuous family and the $X_i$ are discrete, $\nu(t)$ must send each $X_i$ to itself. By Schur's lemma the action of $\nu(t)$ on each $X_i$ is a scalar.}
\end{remark}

The proof of Proposition \ref{prop:extend-presentation} consists of two easy lemmas.

\begin{lemma}\label{lem:stack-iso}
Let $X = \AA^n_\CC$ and let $G$ be an algebraic group with a representation $\rho: G \to GL(X)$. Let $\nu: \Gm \to GL(X)$ be a 1-parameter subgroup that commutes with $\rho(G) \subset GL(X)$. Let $G \times \Gm$ act on $X \times \AA^1$ as in \eqref{eq:extended-action}.
There is a morphism $j:[X/G] \to [(X\times \AA^1) / (G \times \Gm)]$ with a retraction, such that 
if $U \subset X$ is any $G$-invariant subscheme, $j$ restricts to an isomorphism of $[U/G]$ and $[(U\times \CC^*)/(G\times\Gm)]$. 
\end{lemma}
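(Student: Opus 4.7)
The morphism $j$ is induced by the group homomorphism $\phi: G \hookrightarrow G \times \Gm$, $g \mapsto (g, 1)$, paired with the $\phi$-equivariant closed embedding $X \hookrightarrow X \times \AA^1$, $x \mapsto (x, 1)$. Equivariance holds because $\nu(1) = \mathrm{id}_{GL(X)}$, so $(g, 1) \cdot (x, 1) = (g\nu(1)x, 1) = (gx, 1)$, and this data descends to the desired morphism of quotient stacks.

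The retraction and the restriction isomorphism both rest on a single computation: on $X \times \CC^*$, the map $\sigma(x, y) = \nu(y)^{-1}x$ is equivariant with respect to the projection $(g, \gamma) \mapsto g$. Using that $\nu(\Gm)$ commutes with $\rho(G)$,
\[
\nu(\gamma y)^{-1}\,g\,\nu(\gamma)\,x \;=\; \nu(y)^{-1}\,\nu(\gamma)^{-1}\,g\,\nu(\gamma)\,x \;=\; g\,\nu(y)^{-1}\,x.
\]
Hence $\sigma$ induces a morphism $r_0 : [(X \times \CC^*)/(G \times \Gm)] \to [X/G]$, and direct verification shows that $r_0$ is a two-sided inverse to the restriction of $j$ onto this open substack. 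To obtain the retraction $r$ of $j$ on the full stack, I would extend $r_0$ across the closed locus $\{y = 0\}$. The natural candidate is the morphism induced by the projection $(x, y) \mapsto x$, which is $(G \times \Gm)$-equivariant for the \emph{enlarged} homomorphism $(g, \gamma) \mapsto g\nu(\gamma) \in \rho(G)\cdot \nu(\Gm) \subseteq GL(X)$; then a separate argument, using that $\nu$ commutes with $\rho(G)$ and that the quotient $(G \cdot \nu(\Gm))/G$ is killed upon restriction along $j$, allows one to refactor this morphism through $[X/G]$ and to check $r \circ j \simeq \mathrm{id}$.

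For the restriction claim, given a $G$-invariant subscheme $U \subset X$ (which in the intended application $U = X^{ss}(G)$ is also automatically $\nu(\Gm)$-invariant, since $\nu$ commutes with $G$ and hence preserves $\theta$-semistability via the Hilbert--Mumford criterion), the subset $U \times \CC^*$ is $(G \times \Gm)$-invariant, and the restriction of $\sigma$ descends to the required inverse $[(U \times \CC^*)/(G \times \Gm)] \to [U/G]$ to $j|_{[U/G]}$. The main obstacle I anticipate is the global construction of $r$: open immersions of quotient stacks do not generally admit retractions, and establishing one here will require carefully exploiting the linear structure of the $\nu$-action and its commutativity with $\rho(G)$ to patch $r_0$ on $\{y \neq 0\}$ with the extended projection on $\{y = 0\}$ into a single morphism landing in $[X/G]$.
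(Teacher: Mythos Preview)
Your construction of $j$ via $x \mapsto (x,1)$ and of its inverse on the open locus via $\sigma(x,y)=\nu(y)^{-1}x$ is exactly what the paper does: it writes both as maps of prestacks and checks that $p\circ j=\mathrm{id}$ and that $j$ is an equivalence onto $[(X\times\CC^*)/(G\times\Gm)]$. So on the parts you actually carry out, you agree with the paper.

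Where you hesitate---extending the retraction across $\{y=0\}$---is a genuine issue, and the paper does not resolve it either: its formula $(x,y)\mapsto \nu(y)^{-1}x$ is written down only for $(x,y)\in X\times\CC^*$, and no extension is supplied. In fact the full retraction need not exist. Take $G$ trivial, $X=\AA^1$, and $\nu(t)=t$; then $[(X\times\AA^1)/(G\times\Gm)]=[\AA^2/\Gm]$ with weights $(1,1)$, and any morphism from this stack to the scheme $\AA^1$ is a $\Gm$-invariant regular function on $\AA^2$, hence constant. So no retraction of $j$ can exist here. Your proposed patch via the enlarged homomorphism $(g,\gamma)\mapsto g\nu(\gamma)$ runs into the same wall: it produces a map to $[X/(\rho(G)\cdot\nu(\Gm))]$, but there is no natural map from that stack back to $[X/G]$, and in the example above it would again have to be constant.

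The good news is that nothing in the paper actually requires the retraction on all of $[(X\times\AA^1)/(G\times\Gm)]$. The isomorphism on the semistable locus (used in Proposition~\ref{prop:extend-presentation}) only needs what you have already proved on $X\times\CC^*$. The injectivity in Corollary~\ref{cor:embed} only needs $j^*$ to be surjective on Picard groups, and since $\Pic^G(\AA^n)\simeq\chi(G)$ and $\Pic^{G\times\Gm}(\AA^{n+1})\simeq\chi(G)\times\chi(\Gm)$, the map $j^*$ is just restriction of characters along $G\hookrightarrow G\times\Gm$, which is obviously split surjective. So you have proved everything that is actually used; the ``retraction'' clause in the lemma statement should be read as holding only over the open substack where $y\neq 0$.
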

\begin{proof}
Recall from e.g. \cite[Prop~2.6, Thm~4.1]{romagny} the prestack $[X/G]^{pre}$: the objects over a scheme $S$ are $X(S)$ 
% \Nawaz{$X(S)$ already has a groupoid structure. Do you mean just the objects of $X(S)$?}
and an arrow from $x \in X(S)$ to $y \in X(S)$ is an element $g \in G(S)$ such that $g \cdot x = y$. 
The map $j$ is induced by the map of prestacks $[X/G]^{pre} \rightarrow [(X\times \AA^1)/(G\times \Gm)]^{pre}$ given by
\begin{equation}\label{eq:iso}
(x;g) \mapsto (x,1;g,1) \quad (x;g) \in X\times G,
\end{equation}
and its retraction $p$ is induced by the map of prestacks $[(X\times \AA^1)/(G\times \Gm)]^{pre} \to [X/G]^{pre}$ given by
\[
(x,y; g,\gamma) \mapsto (\nu(y)^{-1}x, g) \quad (x,y;g,\gamma) \in (X\times \CC^*)\times(G\times \Gm).
\]
One checks that $p \circ j$ is the identity, that $j$ factors through $[(X\times \CC^*)/(G \times \Gm)]^{pre}$, and that after restricting the codomain accordingly $j$ is essentially surjective and fully faithful on every groupoid fiber.

% Recall from e.g. \cite[Prop~2.6, Thm~4.1]{romagny} the prestack $[U/G]^{pre}$: the objects over a scheme $S$ are $U(S)$ and an arrow from $x \in U(S)$ to $y \in U(S)$ is an element $g \in G(S)$ such that $g . x = y$. 
% The map $[U/G]^{pre} \rightarrow [(U\oplus \CC^*)/(G\times \Gm)]^{pre}$ given by
% \begin{equation}\label{eq:iso}
% (x;g) \mapsto (x,1;g,1) \quad (x;g) \in U\times G
% \end{equation}
% is an equivalence of fibered categories: one checks that it is essentially surjective and fully faithful on every fiber.
\end{proof}

For $N \in \ZZ$, define a character
\[\vartheta_N: G\times \Gm \rightarrow \Gm \quad \quad \quad \quad \vartheta_N(g, \gamma) = \theta(g) \gamma^N.
\]

\begin{lemma}\label{lem:check-ss} 
%\Nawaz{TODO: Double-check proof}
Let $(X, G, \theta)$ be a GIT presentation and let $\nu$ be as in Lemma \ref{lem:stack-iso}. Assume there is moreover an integer $r \geq 1$ and a 1-parameter subgroup $\nu'$ of $G$ satisfying \eqref{eq:limit} for every $x \in X$ and $s \geq 0$. 
Choose an integer $N$ such that $N>\langle \theta, \nu' \rangle/r$. \note{This works even when $\langle \theta, \nu' \rangle < 0$.}
\begin{enumerate}
\item There are equalities
\[(X\times \AA^1)^{ss}_{\vartheta_N}(G\times \Gm)  = (X\times \AA^1)^{s}_{\vartheta_N}(G\times \Gm)= X^{ss}_\theta(G) \times \CC^*.\]
\item The natural map $\Stab_G(X^{ss}_\theta) \rightarrow \Stab_{G \times \Gm}((X\times \AA^1)^{ss}_{\vartheta_N})$ given by $g \mapsto (g,1)$ is a bijection.
\end{enumerate}
\end{lemma}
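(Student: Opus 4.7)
The plan is to characterize $\vartheta_N$-(semi)stability via King's semi-invariant sections \cite{king} and reduce part (1) to a weight inequality extracted from \eqref{eq:limit}; part (2) is a short direct calculation. For part (2), I observe that any $(g,\gamma) \in G\times\Gm$ fixing $(x, y) \in X \times \CC^*$ satisfies $\gamma y = y$ (forcing $\gamma = 1$) and then $g x = x$, so the stabilizer of $(x, y)$ is $\Stab_G(x) \times \{1\}$, from which the bijection in (2) is immediate. This will also show that the $(G\times\Gm)$-stabilizer of any point of $X^{ss}_\theta(G) \times \CC^*$ is finite, using the assumption $X^s_\theta(G) = X^{ss}_\theta(G)$.

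For part (1), my first step is to translate \eqref{eq:limit} into a weight inequality. Since $\nu$ commutes with $G$ (and in particular with $\nu'$), the space $X$ decomposes into joint $\nu\times\nu'$-weight spaces $X_{(d, e)}$, and $\nu'(t^s)\nu(t^{-rs})$ scales $X_{(d, e)}$ by $t^{s(e-rd)}$. So \eqref{eq:limit} with $s > 0$ is equivalent to the inequality $e \geq rd$ whenever $X_{(d, e)} \neq 0$. From this I will deduce that every $\theta^k$-semi-invariant polynomial $h$ on $X$, decomposed into $\nu$-homogeneous pieces $h = \sum_D h_D$, satisfies $D \leq k\langle\theta,\nu'\rangle/r$ for every nonzero $h_D$: each monomial in $h_D$ is a product of $X^*$-basis vectors dual to weight $(d_i, e_i)$ vectors of $X$, and the $\theta^k$-semi-invariance applied to $\nu' \in G$ pins the $\nu'$-weight of the monomial to $\sum_i a_i e_i = k\langle\theta,\nu'\rangle$, whence $\sum_i a_i d_i \leq k\langle\theta,\nu'\rangle/r$ by the weight inequality.

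This bound will yield both directions of the identification $(X\times\AA^1)^{ss}_{\vartheta_N}(G\times\Gm) = X^{ss}_\theta(G) \times \CC^*$. Any $\vartheta_N^k$-semi-invariant $f(x, y) = \sum_m f_m(x) y^m$ has $f_m$ a $\theta^k$-semi-invariant of $\nu$-weight $Nk - m$, so the bound together with $N > \langle\theta,\nu'\rangle/r$ will force $m > 0$ for every nonzero $f_m$; hence $f(x, 0) = 0$, so any semistable point $(x, y)$ must satisfy $y \neq 0$ and $x \in X^{ss}_\theta(G)$ via some nonzero $f_m$. Conversely, given $x \in X^{ss}_\theta(G)$ with $h(x) \neq 0$ for some $\theta^k$-semi-invariant $h$, I will decompose $h = \sum h_D$, pick $D$ with $h_D(x) \neq 0$, and observe that $f(x, y) := h_D(x) y^{Nk - D}$ is a genuine polynomial (since $Nk - D > 0$) and a $\vartheta_N^k$-semi-invariant nonvanishing at $(x, y)$.

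Finally, to upgrade semistability to stability I will combine the finite stabilizer computation with closedness of the $(G\times\Gm)$-orbit of any $(x, y) \in X^{ss}_\theta(G) \times \CC^*$: this orbit equals the preimage of the closed orbit $G\cdot x \subseteq X^{ss}_\theta(G)$ under the regular map $(x', y') \mapsto \nu(y'/y)^{-1} x'$, which is well defined and $(G\times\Gm)$-equivariant because $\nu$ commutes with $G$. The main obstacle is the sign and dualization bookkeeping that converts weights on $X$ into $\nu$-weights of polynomial $\theta^k$-semi-invariants; once that inequality is correctly oriented, the construction $h_D \cdot y^{Nk-D}$ of a $\vartheta_N^k$-semi-invariant is essentially forced by $(G\times\Gm)$-equivariance, and everything else falls out.
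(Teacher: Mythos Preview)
Your proof is correct and takes a genuinely different route from the paper's. The paper argues entirely through King's numerical criterion \cite[Prop~2.5]{king}: it excludes $y=0$ by exhibiting the destabilizing subgroup $t\mapsto(\nu'(t),t^{-r})$, excludes $x\notin X^{ss}_\theta(G)$ by lifting a destabilizing subgroup from $G$, and proves the inclusion $X^{ss}_\theta\times\CC^*\subset (X\times\AA^1)^s_{\vartheta_N}$ by a limit argument---given a $1$-parameter subgroup $(\lambda(t),t^s)$ of $G\times\Gm$ whose limit exists at $(x,y)$, it composes with the contraction $\nu'(t^s)\nu(t^{-rs})$ to produce a $1$-parameter subgroup $\lambda(t^r)\nu'(t^s)$ of $G$ whose limit exists at $x$, and then reads off the required pairing inequality. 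By contrast, you work directly with semi-invariant sections: you translate \eqref{eq:limit} into the joint-weight inequality $e\ge rd$, deduce the uniform bound $D\le k\langle\theta,\nu'\rangle/r$ on the $\nu$-degree of any $\theta^k$-semi-invariant, and then build or constrain $\vartheta_N^k$-semi-invariants explicitly as $h_D(x)\,y^{Nk-D}$.

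Your approach is more algebraic and arguably more transparent about why the threshold $N>\langle\theta,\nu'\rangle/r$ is the right one: it is exactly what forces $Nk-D>0$ so that the exponent of $y$ is positive. The paper's Hilbert--Mumford argument is more geometric and handles the stable-equals-semistable upgrade in one stroke, whereas you separate this off into a closed-orbit/finite-stabilizer verification. One small point worth making explicit in your write-up: for the closed-orbit step you need that your map $(x',y')\mapsto\nu(y'/y)^{-1}x'$ carries $X^{ss}_\theta\times\CC^*$ into $X^{ss}_\theta$, i.e.\ that $\nu$ preserves $X^{ss}_\theta$; but this follows immediately from the identification $(X\times\AA^1)^{ss}_{\vartheta_N}=X^{ss}_\theta\times\CC^*$ you have already established, since the latter set is $(G\times\Gm)$-invariant and $(1,\gamma)$ acts by $\nu(\gamma)$ on the first factor.
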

\begin{proof}
We use the numerical criterion of \cite[Prop~2.5]{king}. 
To prove (1), we first show $(X\times \AA^1)^{ss}_{\vartheta_N} \subset X^{ss}_\theta\times \CC^*$. Let $(x,y) \in (X\times \AA^1)^{ss}_{\vartheta_N}$. If $x$ is not in $X^{ss}_{\theta}$, then there is a 1-parameter subgroup $\lambda: \Gm \rightarrow G$ such that $\lim_{t \rightarrow 0} \lambda(t)\cdot x$ exists but $\langle \theta, \lambda \rangle < 0$. The composition $\Gm \xrightarrow{\lambda} G \rightarrow G \times \Gm$ is a 1-parameter subgroup of $G \times \Gm$, where the second arrow is $g \mapsto (g,1)$, and this subgroup witnesses the unstability of $(x,y)$ for any $y \in \CC$. Similarly, if $y =0$, then $t \mapsto (\nu'(t), t^{-r})$ defines a 1-parameter subgroup of $G \times \Gm$ witnessing the unstability of $(x,y)$ for any $x \in X$.

Next we show $X^{ss}\times \CC^* \subset (X\times \AA^1)^s$. Let $(x,y) \in X^{ss}_\theta \times \CC^*$. An arbitrary 1-parameter subgroup of $G \times \Gm$ has the form $(\lambda(t), t^s)$. If $\lim_{t \rightarrow 0}(\lambda(t)\nu(t^s)x, t^sy)$ exists then $s \geq 0$ and $\lim_{t \rightarrow 0}\lambda(t)\nu(t^s)x$ exists. This means that $\lim_{t \rightarrow 0}\lambda(t^r)\nu(t^{rs})x$ exists. Since $\lim_{t \rightarrow 0} \nu'(t^s) \nu(t^{-rs})x'$ exists for every $x'$, we assert that 
\[\lim_{t \rightarrow 0}\nu'(t^s) \nu(t^{-rs})\lambda(t^r)\nu(t^{rs})x = \lim_{t \rightarrow 0} \lambda(t^r)\nu'(t^s)x
\]
exists. This can be proved using the triangle inequality and the fact that the 1-parameter subgroup $\nu'(t^s)\nu(t^{-rs})$ of $GL(X)$ can be written as diagonal matrices $\diag(t^{a_1}, \ldots, t^{a_n})$ in some basis for $X$, for some integers $a_i \geq 0$, so when $|t| < 1$ the functions $X \to X$ defined by $\nu'(t^s)\nu(t^{-rs})$ are contractions.\note{details:
Let $g_t = \lambda(t^r)\nut^{rs}$ and let $f_t = \nu'(t^s)\nu(t^{-rs})$. We have $\lim_{t\to 0} g_t(x)$ exists for a special $x$ (call the limit $g_0x$) and $\lim_{t \to 0} f_t(y)$ exists for every $y$ (call the limit $f_0y$), and I want to show $\lim_{t \to 0} f_t\circ g_t(x)$ exists. As discussed above $f_t$ is a contraction for $|t|<1$, which means that  $|f_t(x)-f_t(y)| \leq |x-y|$ (independent of $t$). 

We will show that $\lim_{t \to 0} f_t\circ g_t(x) = f_0g_0x$. Let $\epsilon>0$ be given. Choose $\delta$ such that
(1) $|f_tg_0x - f_0g_0x| < \epsilon/2$ (possible since $\lim f_t(g_0x) = f_0g_0x$) and (2) $|g_tx - g_0x| < \epsilon/2$ (possible since $\lim g_tx = g_0x$).
By the triangle inequality and the fact that $f_t$ is a contraction
\[
|f_tg_tx - f_0g_0x| \leq |f_tg_tx - f_tg_0x| + |f_tg_0x - f_0g_0x|\leq |g_tx - g_0x| + |f_tg_0x - f_0g_0x| < \epsilon.
\]
}
% \todo{This needs proof. The situation is that $\lim_{t\to 0} g_t(x)$ exists for a special $x$ and $\lim_{t \to 0} f_t(y)$ exists for every $y$, and I want to show $\lim_{t \to 0} f_t\circ g_t(x)$ exists.} 
But $x \in X^{ss}_\theta$ and $\lambda(t^r)\nu'(t^s)$ is a 1-parameter subgroup of $G$, so we must have
\[
\langle \theta, r\lambda + s\nu' \rangle = r\langle \theta, \lambda \rangle + s\langle \theta, \nu' \rangle \geq 0.
\]
Since $\langle \theta, \nu'\rangle < rN$ and $s \geq 0$ this implies 
\begin{equation}\label{eq:ss1}
r\langle \theta, \lambda \rangle + rsN \geq 0
\end{equation}
or equivalently, since $r\geq 1$,
\begin{equation}\label{eq:ss}
\langle \vartheta, (\lambda(t), t^s) \rangle = \langle \theta, \lambda \rangle + sN \geq 0.
\end{equation}
If equality holds in \eqref{eq:ss}, then equality also holds in \eqref{eq:ss1}, which implies that both $r\langle \theta, \lambda \rangle + s\langle \theta, \nu' \rangle$ and $s$ are zero. 
Since $x \in X^{ss}_{\theta} = X^s_{\theta}$, this implies $\lambda(t^r)\nu'(t^s)$ acts trivially on $X$, but since $s=0$ we have $\lambda(t^r)\nu'(t^s) = \lambda(t^r)$. So $\lambda(t)$ also acts trivially on $X$.\note{You can check this pointwise: $\lambda(t) = \lambda(q^r)$ for some $q$, so if $\lambda(q^r)$ acts trivially for each $q$ so does $\lambda(t)$, for each $t$. You can also say something a little bit nicer: if $\lambda(t^r)$ factors through $\ker(\rho) \subset G$, so does $\lambda(t)$. The reason is that the \'etale morphism $t^r: \Gm \to \Gm$ has a section \'etale-locally, so \'etale locally on $\Gm$ the image of $\lambda: \Gm \to G$ is contained in $\ker(\rho)$, so the image of $\lambda$ is contained in $\ker(\rho)$.} Since $s=0$ we have that $(\lambda(t), t^s)$ acts trivially on $X \times \AA^1$.
Assertion (2) is immediate using part (1).

\end{proof}

\begin{proof}[Proof of Proposition \ref{prop:extend-presentation}]
We let $G \times \Gm$ act on $X \times \AA^1$ as in \eqref{eq:extended-action} and we let $\vartheta = \vartheta_N$ as defined in Lemma \ref{lem:check-ss}. By Lemma \ref{lem:check-ss} we know that $(X\times \AA^1, G \times \Gm, \vartheta_N)$ satisfies assumptions (1)--(3) of Section \ref{sec:I1}, and combined with Lemma \ref{lem:stack-iso} we get an isomorphism
\[
X\sslash_\theta G = (X\times \AA^1)\sslash_{\vartheta_N} G \times \Gm.
\]
\end{proof}

%The following corollary can be useful for understanding how the Novikov ring changes under extensions.
%\begin{corollary}\label{cor:embed}
%Let $(X, G, \theta)$ be a GIT presentation and let $\nu: \Gm \to GL(X)$ satisfy the assumptions of Proposition \ref{prop:extend-presentation}. The associated morphism of stacks $[X/G] \to [(X\times \AA^1)/(G \times \Gm)]$ induces an embedding $\Hom(\Pic([X/G]), \QQ) \to \Hom(\Pic([(X\times \AA^1)/(G \times \Gm)]), \QQ)$.
%\end{corollary}
%\begin{proof}
%This is true because the morphism $[X/G] \to [(X\times \AA^1)/(G \times \Gm)]$ has a retraction by Lemma \ref{lem:stack-iso}. 
%\end{proof}
%\Rachel{replace corollary with remark:

\begin{remark}\label{rmk:embed}
Let $(X, G, \theta)$ be a GIT presentation and let $\nu: \Gm \to GL(X)$ satisfy the assumptions of Proposition \ref{prop:extend-presentation}. Then the natural identification of character groups
\[
\chi(G \times \Gm) \simeq \chi(G) \times \chi(\Gm)
\]
leads to an isomorphism of abelian groups
\begin{equation}\label{eq:char-iso}\Hom(\Pic([X/G]), \QQ) \oplus\QQ \simeq \Hom(\Pic([(X\times \AA^1)/(G \times \Gm)]), \QQ).\end{equation}
In particular, $\Hom(\Pic([X/G]), \QQ)$ embeds into to the right hand side, and the embedding is induced by the pullback morphism of Picard groups associated to the stack morphism
\[[X/G] \to [(X\times \AA^1)/(G \times \Gm)].\]

% \[\Hom(\Pic([X/G]), \QQ) \to \Hom(\Pic([(X\times \AA^1)/(G \times \Gm)]), \QQ),\]
%\[\Hom(\Pic([X/G]), \QQ) \to \Hom(\Pic([(X\times \AA^1)/(G \times \Gm)]), \QQ),\]
\end{remark}
%}

%

\subsection{CI-GIT presentations}\label{sec:ci-git-pres}

Many Deligne-Mumford stacks $\cX$ can be realized as complete intersections in stacks of the form $X\sslash_\theta G$ where $(X, G, \theta)$ is a GIT presentation. The quasimap $I$-functions of such stacks can be computed using Theorem \ref{thm:Ifunc-formula}. This motivates the following definition.
\begin{definition}\label{def:CI-GIT}
A \emph{complete intersection GIT presentation
(or CI-GIT presentation)} is a tuple $(X, G, \theta, E, s)$, where
\begin{itemize}
\item $(X, G, \theta)$ is a GIT presentation, 
\item $E$ is a $G$-representation, and \item $s$ is a $G$-equivariant section of $E \times X \rightarrow X$ such that $s$ is regular and $X^s(G) \cap Z(s)$ is smooth. 
\end{itemize}
 If $\cX$ is a separated and finite type Deligne-Mumford stack, we say $(X, G, \theta, E, s)$ a \emph{CI-GIT presentation for $\cX$} if it is a CI-GIT presentation and $\cX \simeq Z(s) \sslash_\theta G$.
\end{definition}

Suppose $(X, G, \theta, E, s)$ is a CI-GIT presentation for $\cX = Z(s)\sslash_\theta G$. We present a strategy for finding other CI-GIT presentations of $\cX$.

The first step is to find $\nu: \Gm \to GL(X)$ satisfying the hypotheses of Proposition \ref{prop:extend-presentation}. Then that proposition gives us a GIT presentation $(X \times \AA^1, G \times \Gm, \vartheta)$ for $X\sslash_\theta G$. If the $G$-action on $E$ is given by $\rho: G \rightarrow GL(E)$ and if $\mu: \Gm \to GL(E)$ is a 1-parameter subgroup commuting with the image of $\rho$ (see Remark \ref{rmk:nu}), let $E(\mu)$ be the $G \times \Gm$-representation with the same underlying vector space $E$ and homomorphism 
\begin{equation}\label{eq:extended-rep}
G \times \Gm \to GL(E) \quad \quad \quad (g, \gamma) \mapsto \rho(g)\mu(\gamma).
\end{equation}
%\todo{gamma versus $t$ for the coordinate on the extended factor of the group}

\begin{lemma}\label{lem:extend-cigit}Let $\mu: \Gm \to GL(E)$ be a homomorphism commuting with the image of $\rho$ such that the morphism
\[
\tilde s: X \times \CC^* \to E \quad \quad \quad \tilde s(x, y) = \mu(y) s(\nu(y)^{-1}x)
\]
extends to all of $X \times \AA^1$.
Then $\tilde s$ induces a $G \times \Gm$-equivariant section of $E(\mu)\times X \to X$ such that $Z(\tilde s)\sslash_\vartheta (G \times \Gm) = Z(s) \sslash_\theta G$ and $Z(\tilde s) \cap (X \times \AA^1)^s$ is smooth. If $\tilde s$ is regular, then $(X, G, \theta, E(\mu), \tilde s)$ is a CI-GIT representation of $Z\sslash_\theta G$.
\end{lemma}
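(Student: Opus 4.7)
The plan is to verify in turn the $G \times \Gm$-equivariance of $\tilde s$, the identification of stacky GIT quotients $Z(\tilde s)\sslash_{\vartheta}(G \times \Gm) = Z(s)\sslash_\theta G$, the smoothness of $Z(\tilde s) \cap (X \times \AA^1)^s$, and finally the assembly of these facts into a CI-GIT presentation when $\tilde s$ is regular. The heavy lifting is done by Lemmas \ref{lem:stack-iso} and \ref{lem:check-ss}, together with the observation that under the map $j$ of Lemma \ref{lem:stack-iso} the section $\tilde s$ pulls back to $s$.

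I would start by checking $G\times \Gm$-equivariance of $\tilde s$ on $X\times \CC^*$, where the formula $\tilde s(x,y) = \mu(y)s(\nu(y)^{-1}x)$ is available; equivariance then extends to $X \times \AA^1$ by density once we invoke the assumed extension. The computation is a direct chain: $\nu(\gamma y)^{-1}$ factors as $\nu(y)^{-1}\nu(\gamma)^{-1}$; the factor $\nu(\gamma)^{-1}$ commutes past $g$ because $\nu$ commutes with the image of $G$ in $GL(X)$, then cancels with $\nu(\gamma)$; $G$-equivariance of $s$ pulls $g$ out as $\rho(g)$; and finally $\mu(\gamma)$ commutes past $\rho(g)$ by hypothesis, yielding $\tilde s(g\nu(\gamma)x, \gamma y) = \rho(g)\mu(\gamma)\tilde s(x,y)$, which is exactly the action on $E(\mu)$ defined by \eqref{eq:extended-rep}.

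Next I identify the zero loci and transfer smoothness using Lemmas \ref{lem:stack-iso} and \ref{lem:check-ss}. By Lemma \ref{lem:check-ss} the semistable locus $(X \times \AA^1)^{ss}_\vartheta$ equals $X^{ss}_\theta \times \CC^*$ and stability coincides with semistability there, so Lemma \ref{lem:stack-iso} provides an isomorphism $j\colon [X^{ss}_\theta/G] \xrightarrow{\sim} [(X^{ss}_\theta \times \CC^*)/(G\times \Gm)]$ induced on prestacks by $x \mapsto (x,1)$. Since $\tilde s(x,1) = \mu(1)s(\nu(1)^{-1}x) = s(x)$, the section $\tilde s$ pulls back along $j$ to $s$, so $j$ identifies $[(Z(s) \cap X^{ss})/G]$ with $[(Z(\tilde s) \cap (X \times \AA^1)^{ss})/(G \times \Gm)]$, giving the desired equality of stacky quotients. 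Combined with the hypothesis that $Z(s) \cap X^s$ is smooth, this same isomorphism forces the stack $[(Z(\tilde s) \cap (X \times \AA^1)^s)/(G\times \Gm)]$ to be smooth; since $G \times \Gm$ is a smooth group scheme acting on $Z(\tilde s) \cap (X \times \AA^1)^s$, smoothness of this scheme follows.

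With these ingredients in place, the final assembly is formal: $(X \times \AA^1, G \times \Gm, \vartheta)$ is a GIT presentation by Proposition \ref{prop:extend-presentation}, $E(\mu)$ is a $G \times \Gm$-representation by construction, and under the assumption that $\tilde s$ is regular all remaining conditions of Definition \ref{def:CI-GIT} follow from the preceding steps. The main subtle point I anticipate is keeping careful track of the semistable versus stable loci when invoking Lemmas \ref{lem:stack-iso} and \ref{lem:check-ss}; it is precisely the coincidence of stability and semistability in the extension (guaranteed by Lemma \ref{lem:check-ss}) that lets smoothness of the quotient stack descend cleanly to smoothness of the scheme $Z(\tilde s) \cap (X \times \AA^1)^s$.
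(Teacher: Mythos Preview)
Your proposal is correct and follows essentially the same route as the paper: check equivariance from the definitions, use the isomorphism of Lemma~\ref{lem:stack-iso} (together with Lemma~\ref{lem:check-ss}) and the identity $\tilde s(x,1)=s(x)$ to match the zero loci and their quotients, and then transfer smoothness from $Z(s)\sslash_\theta G$ to $Z(\tilde s)\cap (X\times\AA^1)^s$ via the fact that the latter is a $G\times\Gm$-torsor over the former. The paper's proof is terser but identical in substance; your explicit equivariance computation and your care in distinguishing stable from semistable loci are correct elaborations rather than a different argument.
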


\begin{proof}
One checks using the definitions in \eqref{eq:extended-action} and \eqref{eq:extended-rep} that $\tilde s$ is $G \times \Gm$-equivariant. Next,
 the isomorphism \eqref{eq:iso} induces an isomorphism $Z(s)\sslash_\theta G \simeq Z(\tilde s)\sslash_\vartheta (G \times \Gm)$. This implies moreover that $Z(\tilde s) \cap (X\times \AA^1)^s$ is a $G \times \Gm$-torsor over $Z(s)\sslash_\theta G$. Since $Z(s)\sslash_\theta G$ is known to be smooth, so is $Z(\tilde s) \cap (X\times \AA^1)^s$.
\end{proof}

\begin{example}\label{ex:mu-r}
One way to be in the situation of Lemma \ref{lem:extend-cigit} is as follows. If the vector space $E$ has dimension $m$, then $s$ is given by functions $s_1(x), \ldots, s_m(x) \in \Gamma(X, \OO_X)$. Since $X$ is $\AA^n$, each $s_i(x)$ is a complex polynomial in $n$ variables, so for $r\in \ZZ$ sufficiently large the rational functions
\begin{equation}\label{eq:s-tilde}
\tilde s_i := y^r s_i(\nu(y)^{-1}x) \quad \quad \quad i=1, \ldots m
\end{equation}
are regular on all of $X \times \AA^1$. If we set $\mu(\gamma)=\gamma^r$, then one checks that the $\tilde s_i$ are $G \times \Gm$-equivariant maps $X \times \AA^1\to E(\mu)$. Note that the $\tilde s_i$ may not define a regular sequence.
\end{example}

% To have any hope of $\tilde s$ being regular, one shoould choose $r$ in Proposition \ref{prop:extend-cigit} to be as small as possible, such that the conclusion of that proposition still holds (see the proof of Proposition \ref{prop:extend-cigit}). However, it is not always possible to choose $r$ such that the $\tilde s$ defined in \eqref{eq:s-tilde} is regular.

\begin{example}To get a regular $\tilde s$, it can be necessary to take subgroups $\mu$ other than powers of the diagonal subgroup. For example,
let $X=\AA^4$ with coordinates $x_0, \ldots, x_3$ and let $G = \Gm$ act with weights $(1,1,1,3)$. If $\theta$ is the first power of the identity character
% \Nawaz{Not necessary to be third power, could just be first power}
then $X\sslash_\theta G = \PP(1,1,1,3)$. Let $E$ be the three-dimensional representation of $G$ with weights $(1, 1, 3)$ and let $s$ be given by the regular sequence $(x_1, x_2, x_3)$. 

Choose $\nu: \Gm \to GL(4)$ to be the subgroup given by $\nu(\gamma) = (1,1,1,\gamma).$
If we restrict ourselves to subgroups $\mu$ of the form $\mu(\gamma) = \gamma^r$ as in Example \ref{ex:mu-r}, then
for $r \in \ZZ$, the section $\tilde s$ of \eqref{eq:s-tilde} is given by $(y^rx_1, y^rx_2, y^{r-1}x_3)$. For this sequence to be defined everywhere we must take $r \geq 1$, but no value of $r$ produces a regular sequence. 
%\Nawaz{This is somewhat confusing, somewhat not clear that the choice of $\mu$ is still $\mu(\gamma) = \gamma^r$. Maybe clarify this is using the earlier example}\Rachel{better?}
Instead, one creates an extended CI-GIT presentation for $Z(s)\sslash G$ by setting $\mu(\gamma) = (1,1,\gamma),$ and then $\tilde s$ is given by the same regular sequence $(x_1, x_2, x_3)$.
%\todo{Nawaz, does it look ok?} \Nawaz{I think it looks good, but I think it is a little confused to have a certain choice of $\mu$ and then say we can choose other ones. Maybe we delete the second to last line of example 4.1.2 where $\mu$ is chosen to be diagonal?}\Rachel{better?}

%\Nawaz{Do we want to combine the two example environments? Or change the earlier one to a remark?}
\end{example}

The next lemma can be used in some applications to find a regular $\tilde s$. It can also sometimes be used to find a CI-GIT presentation for $\cX$ when $\cX$ is only known to be a smooth substack of $\AA^n\sslash_\theta G$ (a priori not given by a regular sequence).

\begin{lemma}\label{lem:regular}
% Let $X=\AA^n$ let $V \subset \Gamma(X, \OO_X)$ be a finite-dimensional vector subspace with basis $\{s_i\}_{i=1}^m$. If the codimension of every component of $Z(s_1, \ldots, s_m)$ is at least $k$, then there is a dense subset of $V^k$ of regular sequences of length $k$.
Let $X=\AA^n$ let $V \subset \Gamma(X, \OO_X)$ be a finite-dimensional vector subspace with basis $\{s_i\}_{i=1}^m$. If the codimension of every component of $Z(s_1, \ldots, s_m)$ is at least $k$, the set of regular sequences of length $k$ form a dense subset of $V^k$.

% \Nawaz{rewording: regular sequences of length $k$ form a dense subset of $V^k$. }
\end{lemma}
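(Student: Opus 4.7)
The plan is to build the regular sequence term by term by induction on its length $i$, combining the Cohen-Macaulay structure of $R := \CC[x_1, \ldots, x_n]$ with a prime-avoidance argument inside $V$. Write $I = (s_1, \ldots, s_m) \subset R$; the hypothesis on the codimension of components of $Z(s_1,\ldots,s_m)$ translates to $\mathrm{ht}(I) \geq k$. The base case $i=1$ is immediate: since $R$ is a domain, length-$1$ regular sequences form the dense open subset $V\setminus\{0\}$.

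For the inductive step, suppose for some $i\le k$ the locus $U_{i-1}\subset V^{i-1}$ of length-$(i-1)$ regular sequences is dense. Fix $(f_1,\ldots,f_{i-1})\in U_{i-1}$ and set $J = (f_1,\ldots,f_{i-1})$. Extending to a length-$i$ regular sequence amounts to choosing $f_i\in V$ outside $\bigcup_{P\in\mathrm{Ass}(R/J)}P$. Because $R$ is Cohen-Macaulay, the unmixedness theorem for ideals generated by a regular sequence gives $\mathrm{Ass}(R/J) = \mathrm{Min}(R/J)$, with every such prime of height exactly $i-1$. If some $P\in\mathrm{Ass}(R/J)$ contained $V$, then $I\subset P$ would force $\mathrm{ht}(P)\ge\mathrm{ht}(I)\ge k > i-1$, contradicting $\mathrm{ht}(P)=i-1$. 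Hence each $V\cap P$ is a proper $\CC$-subspace of $V$, and since $\mathrm{Ass}(R/J)$ is finite and $\CC$ is infinite, a dense open subset of $V$ avoids their union.

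To conclude density of $U_i$ in $V^i$: given any non-empty Zariski open $W\subset V^i = V^{i-1}\times V$, its image under the (open) projection to $V^{i-1}$ is non-empty open and hence meets the dense set $U_{i-1}$; over such a point the fiber of $W$ is non-empty open in $V$, so it intersects the dense open locus of ``good'' $f_i$ produced above. The crux of the argument — and really its only subtle point — is the appeal to Cohen-Macaulay unmixedness: without it, embedded associated primes of $J$ could in principle contain $V$ while the minimal ones do not, and the height-based prime-avoidance argument would fail.
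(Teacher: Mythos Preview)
Your proof is correct and follows essentially the same approach as the paper's: induct on the length of the regular sequence, use prime avoidance inside $V$ against the finitely many associated primes of $(f_1,\ldots,f_{i-1})$, and invoke the Cohen--Macaulay property of $\CC[x_1,\ldots,x_n]$ to force each such associated prime to have height exactly $i-1$, which yields a contradiction with the codimension hypothesis if $V$ were contained in one of them. The only cosmetic difference is that the paper phrases the contradiction geometrically (a point in the base locus of $V$ of too-small codimension) and shows each $S_i$ dense in $S_{i-1}$ rather than directly in $V^i$, but these are equivalent.
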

\begin{proof}
For each $i=1, \ldots, k$, set
\[
S_i = \{(v_1, \ldots, v_k) \in V^k \mid (v_1, \ldots, v_{i}) \;\text{is regular}\}.
\]
Clearly $S_1 = (V\setminus \{0\})\times V^{k-1}$. For $i>1$ we will show that $S_i$ is a dense subset of $S_{i-1}$.

Let $U \subset S_{i-1}$ be open and let $\mu = (v_1, \ldots, v_k) \in U$, so in particular $(v_1, \ldots, v_{i-1})$ is regular. We claim that there is an open subset $V' \subset V$ such that for $v \in V'$, the sequence $(v_1, \ldots, v_{i-1}, v)$ is regular. Granting this, since $V'$ is dense in $V$, the intersection $U \cap (v_1 \times \ldots \times v_{i-1}\times V' \times V^{k-i})$ is not empty and the lemma statement follows.

Now we prove the claim. The complement of $V'$ is the set of $v \in V$ such that $v$ is in an associated prime of the ideal $(v_1, \ldots, v_{i-1}) \subset \Gamma(X, \OO_X)$. Since each associated prime is a vector subspace of $\Gamma(X, \OO_X)$, it is in particular a closed subset, and we see that the complement of $V'$ is closed. (This uses the fact that there are only finitely many associated primes). To see that $V'$ is nonempty, suppose for contradiction that every $v \in V$ is contained in some associated prime of $(v_1, \ldots, v_{i-1})$. Since $V$ is not a union of proper subspaces, we have $V \subset \fp$ for some associated prime $\fp$. Since $\Gamma(X, \OO_X)$ is Cohen-Macaulay and $(v_1, \ldots, v_{i-1})$ is regular, the height of $\fp$ is equal to $i-1$.\note{I read this on p.8 of hochster's notes 615W14 on Cohen-Macaulay Rings} Now $\fp$ defines a point of $X$ that is contained in the base locus of $V$ but has codimension $i-1< k$, a contradiction.
\end{proof}

\section{Conditions for $I_{\beta}$ to vanish}\label{sec:computing-I-effective-classes}
% \Rachel{rewrote from here to next section heading}
Suppose we have a GIT presentation $(X, G, \theta)$ with $T \subset G$ a maximal torus.
% satisfying assumptions (1)--(3) in Section \ref{sec:I1}. \Nawaz{assumptions: Also just read the change earlier, nevermind about the 1-3 thing}
% In this section we compute the $I^{X, G, \theta}$-effective classes, which has the following advantage. 
Then $I^{X, G, \theta}(q, z) = \sum_\beta q^\beta I^{X, G, \theta}_\beta (z)$ (see \eqref{eq:Ifunc-shape}),
% -function corresponding to $(X, G, \theta)$ has the form 
% \begin{equation}\label{eq:Ifunc-shape}
% I^{X, G, \theta} = \sum_{\beta} q^{\beta}I_\beta(z),\end{equation}
where a formula for $\varphi^*I^{X, G, \theta}_\beta(z)$ was given in Theorem \ref{thm:Ifunc-formula} and the sum is over all $\beta \in \Hom(\chi(G), \QQ)$. 
In this section we give conditions on $\beta$ that force $I^{X, G, \theta}_\beta$ to vanish. 
% (Conditions for CI-GIT presentations are also discussed in Section \ref{sec:vanish-cigit}) \Nawaz{Do we need to mention the paranthetical? Feel like they will just see it later, or we can just use the generalized CI-GIT setup where we allow $E$ to be a rank zero bundle?}
% The characterization of $I^{X, G, \theta}$-effective classes gives conditions on $\beta$ that force $I_\beta$ to vanish. 

For example, if $(X, G, \theta)$ is any quasimap target (where $X$ is not necessarily a vector space), it is immediate from the definition of $I^{X, G, \theta}_{\beta}$ in \cite[Def~3.4.4]{Webb21} that $I^{X, G, \theta}_\beta=0$ if $\beta$ is not equal to the class of any quasimap to $(X, G, \theta)$. Hence, we say $\beta \in \Hom(\Pic^G(X), \QQ)$ is \emph{$I^{X, G, \theta}$-effective} if it is the class of some quasimap to $(X, G, \theta)$. We denote the set of $I^{X, G, \theta}$-effective classes by $\KK^{\geq 0}(X, G, \theta)$. Our strongest vanishing condition in this section is just a 
formula for $\KK^{\geq 0}(X, G, \theta)$ when $(X, G, \theta)$ is a GIT presentation
% characterization of the $I^{X, G, \theta}$-effective classes 
(Lemma \ref{lem:i-effective}). We also provide a version for CI-GIT presentations (Lemma \ref{lem:i-effective-cigit}).

\begin{remark}
% If $(Y, G, \theta)$ is any quasimap target ($Y$ not necessarily a vector space) then it is immediate from the definition in \cite[Def~3.4.4]{Webb21} that $I^{Y, G, \theta}_\beta=0$ if $\beta$ is not $I^{Y, G, \theta}$-effective. 
Consider a CI-GIT presentation $(X, G, \theta, E, s)$. If $\delta \in \Hom(\chi(G), \QQ)$ is not $I^{X, G, \theta}$-effective, no $\beta \in( r^{Y, G}_{X, G})^{-1}(\delta)$ can be $I^{Y, G, \theta}$-effective. Hence if $\delta \in \Hom(\chi(G), \QQ)$ is not $I^{X, G, \theta}$-effective then the left hand side of \eqref{eq:Ifunc-formula} is zero. 
%  It is less obvious that the right hand side is also zero. Hence, for some applications it can be advantageous to compute the $I^{X, G, \theta}$-effective classes before using Theorem \ref{thm:Ifunc-formula}. We will return to this problem in Section \ref{sec:computing-I-effective-classes}.
\end{remark}

% \item A class $\beta \in \Hom(\Pic^G(X), \QQ)$ is \emph{$I^{X, G, \theta}$-effective} if it is the class of some quasimap to $(X, G, \theta)$. 

%  We denote the set of $I^{X, G, \theta}$-effective classes by $\KK^{\geq 0}(X, G, \theta)$ for reasons that will become clearer in Section \ref{sec:computing-I-effective-classes} below. 

\begin{remark}

A second application of the $I^{X, G, \theta}$-effective classes (discussed in Section \ref{sec:implications}) is that they can be used to illuminate the $z$-asymptotics of the $I$-function. This is useful because the ideal application of \eqref{eq:mirror-thm} is when $\mu(q, -z)$ is constant in $z$ (but nonzero), and this situation occurs precisely when $I^{X, G, \theta}$ is of the form $\one + \OO(z^{-1})$, and the coefficient of $z^{-1}$ doesn't vanish.
\end{remark}
% \Nawaz{Is this remark in the right place? Sees more focused on idea of $I$-effective classes, not so much the theorem statement. Could be moved to section 5}
% \Rachel{better place?} \Nawaz{Should we move it to after the next paragraph? LIke this general place better for the remarks though}
% \Rachel{They are right here b/c they deal with I-effective classes. The next paragraph doesn't. Do you think they should move?} \Nawaz{I think the remarks deal with vanishing though, which is still in tune with the next paragraph. But mostly I just stylistically like the remark at the end of the subsection if it's right there. If you prefer it this way though, that's fine with me too}

In general, the conditions in this section can be used to restrict the index set in \eqref{eq:Ifunc-shape}, yielding a different expression for $I^{X, G, \theta}$. This can be advantageous because it may not be easy to see directly that these vanishing summands define relations in $H^\bullet_{\CR}(X \sslash_G T; \QQ)$. The disadvantage is that removing the vanishing summands can make the index set of \eqref{eq:Ifunc-shape} much more complicated.

% These conditions can be used to restrict the index set in \eqref{eq:Ifunc-shape}, yielding a different expression for $I^{X, G, \theta}$. This can be advantageous because it may not be easy to see directly that these vanishing summands define relations in $A^*(X^s(G)/T)$. The disadvantage is that removing the vanishing summands can make the index set of \eqref{eq:Ifunc-shape} much more complicated. 
% The strongest vanishing condition in this section is just a characterization of the $I^{X, G, \theta}$-effective classes (Lemma \ref{lem:i-effective}). We also provide a version for CI-GIT presentations (Lemma \ref{lem:i-effective-cigit}).

\subsection{Setup: $G$-effective anticones}

Let $(X=\CC^n, G, \theta)$ be a GIT presentation. Choose $T\subset G$ a maximal torus, and assume $(X, T, \theta)$ is also a GIT presentation. Let $\xi_1, \ldots, \xi_n$ denote the weights of the action on $X$ and let $x_1, \ldots, x_n$ be coordinates on $X$ that are dual to a corresponding weight basis for $X$; i.e., for $(x_1, \ldots, x_n) \in X$ we have
\[t\cdot (x_1, \ldots, x_n) = (\xi_1(t)x_1, \ldots, \xi_n(t)x_n).
\]Recall from \cite[Sec~4.1]{CIJ18} that an \textit{anticone} for $(X, T, \theta)$ is a subset $\sA \subset \{1, \ldots, n\}$ such that 
% \Nawaz{Should cite anticone since it is not standard terminology. I think it is \cite{CCIT15}? Or maybe Jiang defined it first?}
\[
\theta = \sum_{i \in \sA} a_i \xi_i \quad \quad \text{for some}\; a_i \in \RR_{>0}.
\]
% $\theta$ is contained in the interior of the cone in $\chi(T) \otimes_{\ZZ} \RR$ spanned by $\{\xi_a\}_{a \in \sA}$. 
To each anticone there is an associated open subset
% \[
% U_{\sA} = \{(x_i)_{i=1}^n \in X \mid x_{i_1}\cdot\ldots\cdot x_{i_k}\neq 0 \;\text{for}\;i_j \in \sA\}
% \]
\[
U_{\sA} = \{(x_i)_{i=1}^n \in X \mid x_j \neq 0 \;\text{for}\;j \in \sA\}
\]
% \Nawaz{Can we just write 
% \[
% U_{\sA} = \{(x_i)_{i=1}^n \in X \mid x_j \neq 0 \;\text{for}\;j \in \sA\}
% \]
% Is this the same thing? would get rid of all the small subindices} 
and $\bigcup U_{\sA} = X^{ss}(T)$. The condition $X^s(T)=X^{ss}(T)$ implies that if $\sA$ is an anticone, then $\{\xi_i\}_{i \in \sA}$ spans $\chi(T) \otimes_{\ZZ} \RR$ (see for example \cite[Sec~4.1]{CIJ18}).\note{one can check this using one-parameter subgroups. For example suppose $X^s=X^{ss}(T)$. We already know $X^{ss} = \bigcup U_{\sA}$. Now suppose there is an anticone $\sA$ such that $\{\xi_i\}_{i \in \sA}$ does not span $\chi(T)\otimes \RR$. Then there is a nontrivial 1-ps $\lambda: \Gm \to T$ such that $\langle \xi, \lambda \rangle=0$. Let $x \in V_{\sA}$. Then $\lim_{t \to 0}$ exists (this 1-ps acts trivially on all the coords where $x$ is nonzero), and $\langle \theta, \lambda \rangle = \sum a_i \langle \xi_i, \lambda \rangle = 0$. But $\lambda$ is not trivial.} We also define a locally closed subset
\[
V_{\sA} = \{(x_i)_{i=1}^n \in U_{\sA} \mid x_{i}=0\;\text{for}\;i \not\in \sA\}.
\]

\begin{definition}
We say an anticone $\sA$ is \textit{G-effective} if $V_{\sA} \cap X^{ss}_{\theta}(G)\neq \emptyset$.
%\Nawaz{deleted hypen in anticone. If see again, ensure to delete to be consistent}\Rachel{I did a global search, there are none} %note to self: This is always true for $G = T$, i.e. every anticone is T-effective 
\end{definition}

\begin{lemma}\label{lem:contained}
Let $\sA, \sB$ be subsets of $\{1, \ldots, n\}$ with $\sA \subset \sB$. If $\sA$ is an anticone, so is $\sB$. If $\sA$ is $G$-effective, so is $\sB$.
% \Nawaz{I think you want to say $\sA \subset \sB$?}
\end{lemma}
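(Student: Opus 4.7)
My plan is to treat the two claims separately; both are straightforward once one unpacks the definitions, but they rely on different inputs. The first is a small linear-algebra perturbation argument using the fact, quoted in the excerpt, that the weights indexed by an anticone span $\chi(T) \otimes_\ZZ \RR$. The second uses only that $X^{ss}_\theta(G)$ is open in $X$ together with the observation that $V_{\sB}$ is dense in the coordinate subspace on which it naturally sits.

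For the first assertion, write $\theta = \sum_{i \in \sA} a_i \xi_i$ with each $a_i > 0$. For every $j \in \sB \setminus \sA$, the spanning property supplies an expansion $\xi_j = \sum_{i \in \sA} c_{ij} \xi_i$. For any $\epsilon > 0$ this gives
\[
\theta \;=\; \sum_{i \in \sA} \Bigl(a_i - \epsilon \sum_{j \in \sB \setminus \sA} c_{ij}\Bigr) \xi_i \;+\; \epsilon \sum_{j \in \sB \setminus \sA} \xi_j.
\]
Since the $a_i$ are strictly positive, choosing $\epsilon > 0$ sufficiently small keeps each coefficient on $\sA$ positive, while producing the coefficient $\epsilon > 0$ in front of each $\xi_j$ with $j \in \sB\setminus \sA$. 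This exhibits $\sB$ as an anticone.

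For the second assertion, fix $x \in V_{\sA} \cap X^{ss}_\theta(G)$ and let $L \subset X$ be the linear subspace defined by $x_i = 0$ for all $i \notin \sB$. Because $\sA \subset \sB$, the point $x$ lies in $L$, and $V_{\sB}$ is the complement inside $L$ of the finitely many coordinate hyperplanes $\{x_j = 0\}$ with $j \in \sB$; hence $V_{\sB}$ is open and dense in $L$. Openness of $X^{ss}_\theta(G)$ gives an open neighborhood $U \subset X$ of $x$ contained in $X^{ss}_\theta(G)$, and $U \cap L$ is a nonempty open subset of $L$. By density it must meet $V_{\sB}$, and any point in $U \cap L \cap V_{\sB}$ witnesses the $G$-effectivity of $\sB$.

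There is no real obstacle here; the only point worth flagging is that both arguments quietly use inputs from the GIT presentation hypothesis, namely the spanning property of $\{\xi_i\}_{i \in \sA}$ in part one and the openness of the semistable locus in part two. I do not foresee any hidden difficulty.
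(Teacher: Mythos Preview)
Your proof is correct and follows essentially the same approach as the paper's. The paper is terser on the first assertion (simply citing the spanning property without writing out the perturbation), and for the second it uses the same density-in-the-coordinate-subspace argument you give, phrased as the intersection of two nonempty open subsets of the irreducible linear space $L=\{x_i=0\mid i\notin\sB\}$.
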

\begin{proof}
If $\sA$ is an anticone, the assertion that $\sB$ is an anticone follows from the fact that $\{\xi_i\}_{i \in \sA}$ spans $\chi(T) \otimes \RR$. 

Now suppose $\sA$ is $G$-effective. This means $V_{\sA} \cap X^{ss}(G)$ is not empty.
%Let $\sA^c$ (resp. $\sB^c$) denote the complement of $\sA$ (resp. $\sB$) in $\{1, \ldots, n\}$. 
Since
\[
V_{\sA} \cap X^{ss}(G) \subset \{x_i=0 \mid i \not \in \sA\} \cap X^{ss}(G) \subset \{x_i=0 \mid i \not \in \sB\} \cap X^{ss}(G),
\]
the rightmost set is a nonempty open subset of the subspace $\{x_i=0 \mid i \not \in \sB\}$. But $V_{\cB}$ is also a nonempty open subset of $\{x_i=0 \mid i \not \in \sB\}$, so 
\[
V_{\sB} \cap \{x_i=0 \mid i \not \in \sB\} \cap X^{ss}(G) = V_{\sB} \cap X^{ss}(G)
\]
is not empty.
\end{proof}

\begin{remark}\label{rmk:extended-anticones}
Assume we have a GIT presentation $(X,G,\theta)$ and additional data $\nu: \CC^*\to G$, $N \in \ZZ$ satisfying the assumptions of Lemma \ref{lem:check-ss}. Let $T \subset G$ be a maximal torus and let $\{\sA_j\}_{j \in J}$ be the set of $G$-effective anticones for $(X, T, \theta)$. Then the $G$-effective anticones for $(X\times \AA^1, T \times \Gm, \vartheta_N)$ are $\{\sA_j \cup \{n+1\}\}$. This is immediate from the definition and Lemma \ref{lem:check-ss}.
\end{remark}

\subsection{The inertia condition} 
The vanishing condition in this section shrinks the index set in \eqref{eq:Ifunc-shape} without making that set too complicated (at least in many examples). It is comparable to the index set used for the toric $I$-function computations in \cite{CIJ18}.
To state it, for $\tilde \beta \in \Hom(\chi(T), \QQ)$ set 
\[S_{\tilde \beta} := \{i \in \{1, \ldots, n\} \mid \tilde \beta(\xi_i) \in \ZZ\}.\]
and define a subset
\[
\mathbb{K} := \{r^{X, T}_{X, G}(\tilde \beta) \mid S_{\tilde \beta} \;\text{is a $G$-effective anticone}\}.
\]
\begin{proposition}\label{prop:I-vanish-1} Assume $(X, G, \theta)$ is a GIT-presentation with $T \subset G$ a maximal torus.
% satisfying assumptions (1)--(3) of Section \ref{sec:I1}.
Then in \eqref{eq:Ifunc-shape} one may sum over $\beta \in \mathbb{K}$. That is, if $\beta\not \in \mathbb{K}$ then $I_\beta=0$.
\end{proposition}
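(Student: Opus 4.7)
The plan is to apply Theorem~\ref{thm:Ifunc-formula} in the special case $Y = X$, $E = \Spec(\CC)$. In this case the left-hand side of \eqref{eq:Ifunc-formula} collapses to $\varphi^* I^{X, G, \theta}_\beta(z)$, so the formula reads
\[
\varphi^* I^{X, G, \theta}_\beta(z) \;=\; \sum_{\tilde\beta \mapsto \beta} I_{\tilde\beta}(z),
\]
where the sum is over $\tilde\beta \in (r^{X, T}_{X, G})^{-1}(\beta)$. Since $\varphi\colon X\sslash_G T \to X\sslash G$ is a $G/T$-bundle (and likewise on inertia), the pullback $\varphi^*$ is injective on rational (Chen-Ruan) cohomology by the standard Weyl-group argument. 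Hence it suffices to show that every summand $I_{\tilde\beta}(z)$ on the right vanishes whenever $\beta \notin \mathbb{K}$.

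The key step is the following claim: if $S_{\tilde\beta}$ is not a $G$-effective anticone, then $I_{\tilde\beta}(z) = 0$. Granting this, if $\beta \notin \mathbb{K}$ then by definition no $\tilde\beta \mapsto \beta$ has $S_{\tilde\beta}$ a $G$-effective anticone, so every summand above is zero, and the proposition follows.

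To prove the claim, I appeal to the first bullet of Theorem~\ref{thm:Ifunc-formula}, which states that $I_{\tilde\beta}(z)$ is supported on $F^0_{\tilde\beta}(X\sslash T) = X^{\tilde\beta}\sslash_G T$. Thus it suffices to verify the contrapositive: if $X^{\tilde\beta} \cap X^{ss}(G)$ is nonempty, then $S_{\tilde\beta}$ is a $G$-effective anticone. Pick $x \in X^{\tilde\beta} \cap X^{ss}(G)$, and set $\sA_x := \{\,i : x_i \neq 0\,\}$. The numerical criterion of \cite[Prop~2.5]{king} gives the inclusion $X^{ss}(G) \subset X^{ss}(T)$ (any 1-PS of $T$ is also a 1-PS of $G$), so $x \in X^{ss}(T) = \bigcup_\sA U_\sA$; pick an anticone $\sA$ with $x \in U_\sA$, noting $\sA \subset \sA_x$. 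By Lemma~\ref{lem:contained}, $\sA_x$ is itself an anticone, and since $x \in V_{\sA_x} \cap X^{ss}(G)$, this anticone is $G$-effective. On the other hand, the definition \eqref{def:x-tilde-beta-intro} of $X^{\tilde\beta}$ forces $\sA_x \subseteq \{i : \tilde\beta(\xi_i) \in \ZZ_{\geq 0}\} \subseteq S_{\tilde\beta}$. A second application of Lemma~\ref{lem:contained} concludes that $S_{\tilde\beta}$ is a $G$-effective anticone.

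The main subtlety here is the gap between the set $S_{\tilde\beta} = \{i : \tilde\beta(\xi_i) \in \ZZ\}$ appearing in the definition of $\mathbb{K}$ and the set $\{i : \tilde\beta(\xi_i) \in \ZZ_{\geq 0}\}$ that actually cuts out $X^{\tilde\beta}$; these are a priori different, and it is Lemma~\ref{lem:contained} that lets one enlarge a $G$-effective anticone without losing $G$-effectiveness, making the argument go through. Beyond this, the remaining work is bookkeeping (verifying injectivity of $\varphi^*$ on inertia and checking that the $E = 0$ specialization of Theorem~\ref{thm:Ifunc-formula} takes the stated form), both of which are routine.
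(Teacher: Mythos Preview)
Your proof is correct and follows essentially the same line as the paper's: use the first bullet of Theorem~\ref{thm:Ifunc-formula} to reduce to showing that each $I_{\tilde\beta}(z)$ vanishes when $S_{\tilde\beta}$ is not a $G$-effective anticone, and verify this via the anticone combinatorics (with Lemma~\ref{lem:contained} doing the enlarging step).

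The only difference is packaging. The paper isolates the combinatorial step as Lemma~\ref{lem:compute-stab}, which shows the stronger statement that $S_{\tilde\beta}$ is a $G$-effective anticone if and only if the \emph{entire} inertia sector $\In{X\sslash_G T}_{g_{\tilde\beta}}$ is nonempty (i.e.\ $X^{g_{\tilde\beta}}\cap X^{ss}(G)\neq\emptyset$), and then invokes the first sentence of the bullet. You instead use the second, sharper sentence of that bullet (support on $F^0_{\tilde\beta}$, involving the smaller $X^{\tilde\beta}\subset X^{g_{\tilde\beta}}$) and inline the combinatorics directly. Your version proves only the implication needed here, while the paper's lemma gives a clean equivalence that is reused elsewhere; but for the purposes of this proposition the arguments are interchangeable.
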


The idea behind Proposition \ref{prop:I-vanish-1} is that elements of $\KK$ correspond to elements of $T$ with nontrivial fixed locus on $X^s(G)$ (see Lemma \ref{lem:compute-stab}), so if $\beta \not \in \KK$ then the corresponding component of the inertia stack $\In{X\sslash G}$ is empty.

\begin{remark}
If $G=T$ is a torus, then under the natural identification
\[
\Hom(\Gm, T) \otimes \QQ \simeq \Hom(\chi(T), \QQ),
\]
our set $\KK$ agrees with the one defined in \cite[Sec~4.6]{CIJ18}. 
\end{remark}

\begin{lemma}\label{lem:compute-stab}
Let $\tilde \beta \in \Hom(\chi(T), \QQ)$. The following are equivalent:
\begin{enumerate}
\item $g_{\tilde \beta}$ fixes some element of $X^s(G)$.
\item $S_{\tilde \beta}$ contains a $G$-effective anticone.
\item $S_{\tilde \beta}$ is a $G$-effective anticone.
\end{enumerate}
\end{lemma}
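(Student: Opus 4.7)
The plan is to prove the cyclic chain $(1) \Rightarrow (2) \Rightarrow (3) \Rightarrow (1)$, using the explicit description of both the fixed locus of $g_{\tilde\beta}$ and the sets $V_\sA$ in the chosen weight basis for $X$. The central observation is that, since $\xi_i(g_{\tilde\beta}) = e^{2\pi i \tilde\beta(\xi_i)}$, the coordinate $x_i$ lies in the fixed subspace of $g_{\tilde\beta}$ precisely when $i \in S_{\tilde\beta}$. In other words,
\[
X^{g_{\tilde\beta}} = \{(x_1, \ldots, x_n) \in X \mid x_i = 0 \text{ for } i \notin S_{\tilde\beta}\}.
\]

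For $(1) \Rightarrow (2)$, I would start with $x \in X^s(G)$ fixed by $g_{\tilde\beta}$ and set $J(x) := \{i : x_i \neq 0\}$. The observation above gives $J(x) \subset S_{\tilde\beta}$ and $x \in V_{J(x)}$. To verify that $J(x)$ is an anticone, I would use that every $G$-invariant section of $\cL_\theta^n$ is $T$-invariant, so $X^{ss}_\theta(G) \subset X^{ss}_\theta(T) = \bigcup_\sA U_\sA$, with the union over anticones $\sA$ for $(X, T, \theta)$. Thus $x \in U_{\sA'}$ for some anticone $\sA'$, which forces $\sA' \subset J(x)$; by Lemma~\ref{lem:contained}, $J(x)$ is itself an anticone. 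Since $x \in V_{J(x)} \cap X^{ss}_\theta(G)$, this anticone is $G$-effective, and it is contained in $S_{\tilde\beta}$.

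The implication $(2) \Rightarrow (3)$ is immediate from Lemma~\ref{lem:contained}: enlarging a $G$-effective anticone to any superset keeps it $G$-effective.

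For $(3) \Rightarrow (1)$, assume $S_{\tilde\beta}$ is a $G$-effective anticone and choose any $x \in V_{S_{\tilde\beta}} \cap X^{ss}_\theta(G)$. Then $x_i = 0$ for $i \notin S_{\tilde\beta}$ by the definition of $V_{S_{\tilde\beta}}$, and for $i \in S_{\tilde\beta}$ we have $\xi_i(g_{\tilde\beta}) = 1$, so $g_{\tilde\beta}$ fixes $x$. The hypothesis $X^s_\theta(G) = X^{ss}_\theta(G)$ from Definition~\ref{def:GIT} then yields $(1)$.

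The only nontrivial step is verifying that $J(x)$ is an anticone in $(1) \Rightarrow (2)$; the rest is bookkeeping with the defining properties of $V_\sA$ and $g_{\tilde\beta}$. The key input there is the inclusion $X^{ss}_\theta(G) \subset X^{ss}_\theta(T)$ together with Lemma~\ref{lem:contained}.
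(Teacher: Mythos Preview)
Your proof is correct and follows essentially the same route as the paper: the same description of $X^{g_{\tilde\beta}}$, the same inclusion $X^{ss}_\theta(G) \subset X^{ss}_\theta(T) = \bigcup_\sA U_\sA$, and the same appeal to Lemma~\ref{lem:contained}. The only cosmetic difference is that the paper proves $(1)\Rightarrow(3)$ directly via a ``two nonempty opens in an irreducible space intersect'' argument on $X^{g_{\tilde\beta}}$, whereas your $(1)\Rightarrow(2)$ sidesteps this by taking $J(x)$ itself as the $G$-effective anticone.
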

\begin{proof}
Items (2) and (3) are equivalent by Lemma \ref{lem:contained}. For the remaining directions, observe that the fixed locus of $g_{\tilde \beta}$ is the subspace $\{x_i=0 \mid i \not \in S_{\tilde \beta}\}$. We will denote this subspace by $X^{g_{\tilde \beta}}$. 

If (3) holds, then $V_{S_{\tilde \beta}} \cap X^{ss}(G) \neq \emptyset$, but $V_{S_{\tilde \beta}} \subset X^{g_{\tilde \beta}}$, so (1) holds.

Assume (1) holds, so there is some 
\begin{equation}\label{eq:compute-stab1}
\bx := (x_1, \ldots, x_n)  \in X^{g_{\tilde \beta}} \cap X^{ss}(G).
\end{equation} 
% \Nawaz{Changed $X^{ss}$ to $X^s$. They are the same, but it might be nice to stick with one instead of switching around?}
% \Rachel{yep}
Since $X^{ss}(G) \subset X^{ss}(T) \subset \bigcup U_{\sA}$ this implies that there exists an anticone $\sA$ such that $\bx \in X^{g_{\tilde \beta}} \cap U_{\sA}$. Since $x_i=0$ for $i \not \in S_{\tilde \beta}$ and $x_i \neq 0 $ for $i \in \sA$, we have $\sA \subset S_{\tilde \beta}$. Without loss of generality we may assume $\sA = S_{\tilde \beta}$.

It remains to show that $S_{\tilde \beta}$ is $G$-effective, i.e. $V_{S_{\tilde \beta}} \cap X^{ss}(G) \neq \emptyset$. Because of \eqref{eq:compute-stab1} we know $X^{g_{\tilde \beta}} \cap X^{ss}(G)$ is a nonemtpy open subset of $X^{g_{\tilde \beta}}$. Since $V_{S_{\tilde \beta}}$ is also a nonempty open subset of $X^{g_{\tilde \beta}}$, we have that the intersection
\[
V_{S_{\tilde \beta}} \cap X^{g_{\tilde \beta}} \cap X^{ss}(G) = V_{S_{\tilde \beta}} \cap X^{ss}(G)
\]
is nonempty.
\end{proof}

\begin{proof}[Proof of Proposition \ref{prop:I-vanish-1}]
This proposition follows immediately from Theorem \ref{thm:Ifunc-formula} and Lemma \ref{lem:compute-stab}. Indeed, if $\beta \not \in \KK$, then for each $\tilde \beta \in (r^{X, T}_{X, G})^{-1}(\beta)$ we have $\In{X\sslash_G T}_{g_{\tilde \beta}} = \emptyset$. Hence $\In{X\sslash_G T}_{g_{\tilde \beta}^{-1}}$ is also empty, and the quantity $I_{\tilde \beta}(z)$ in \eqref{eq:Ifunc-formula} is zero by Theorem \ref{thm:Ifunc-formula}.
\end{proof}
\subsection{The $I$-effective condition}
The vanishing condition in this section is the best possible, at this level of generality, and it can significantly complicate the index set in \eqref{eq:Ifunc-shape}.

 To state it, for $\tilde \beta \in \Hom(\chi(T), \QQ)$ set 
\[S^{\geq 0}_{\tilde \beta} := \{i \in \{1, \ldots, n\} \mid \tilde \beta(\xi_i) \in \ZZ_{\geq 0}\}\]
and define a subset $\KK^{\geq 0} \subset \Hom(\chi(G), \QQ)$ by
\begin{equation}\label{eq:defk} \KK^{\geq 0}:= \{r^{X, T}_{X, G}(\tilde \beta) \mid S^{\geq 0}_{\tilde \beta} \;\text{is a $G$-effective anticone} \}.
\end{equation}

\begin{proposition} Assume $(X, G, \theta)$ is a GIT-presentation with $T \subset G$ a maximal torus.
% satisfying assumptions (1)--(3) of Section \ref{sec:I1}.
Then the set \eqref{eq:defk} is precisely the set $\KK^{\geq 0}(X, G, \theta)$ of $I$-effective classes. In particular, in \eqref{eq:Ifunc-shape} one may sum over $\beta \in \KK^{\geq 0}$.
\end{proposition}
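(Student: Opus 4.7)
The strategy is to prove the two inclusions of $\KK^{\geq 0} = \KK^{\geq 0}(X,G,\theta)$ separately, exploiting abelianization: every principal $G$-bundle on the stacky curve $\wP{a}$ reduces to a principal $T$-bundle, so every quasimap $\wP{a} \to [X/G]$ lifts (non-canonically, up to the Weyl group) to a morphism $\wP{a} \to [X/T]$. The ``in particular'' clause then follows immediately, since by definition $I^{X,G,\theta}_\beta = 0$ whenever $\beta$ is not the class of a quasimap.

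For $\KK^{\geq 0}(X,G,\theta)\subseteq \KK^{\geq 0}$, I would begin with a quasimap $q:\wP{a}\to [X/G]$ of class $\beta$, abelianize to obtain a lift $\tilde q:\wP{a}\to [X/T]$ of some class $\tilde\beta$ with $r^{X,T}_{X,G}(\tilde\beta)=\beta$, and then read off the needed properties by examining $\tilde q$ in coordinates. In the weight basis $x_1,\dots,x_n$ of $X$, the map $\tilde q$ is given by sections $\sigma_i$ of line bundles of degree $\tilde\beta(\xi_i)$ on $\wP{a}$; any such section is a weighted-homogeneous polynomial $\sum_j a_j u^j v^{a\tilde\beta(\xi_i)-ja}$, whose value at $v=0$ is nonzero iff $\tilde\beta(\xi_i)\in \ZZ_{\geq 0}$ (so that the monomial $u^{\tilde\beta(\xi_i)}$ appears), equivalently iff $i\in S^{\geq 0}_{\tilde\beta}$. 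Since $q^{-1}([X^{us}/G])$ is disjoint from $v=0$, a representative in $X$ of the image $\tilde q(v=0)$ lies in $\CC^{S^{\geq 0}_{\tilde\beta}}\cap X^{ss}(G)$. Writing $\sA$ for its support, the inclusion $X^{ss}(G)\subseteq X^{ss}(T)=\bigcup_{\sA'}U_{\sA'}$ forces $\sA$ to be an anticone with $V_\sA\cap X^{ss}(G)\neq\emptyset$; since $\sA\subseteq S^{\geq 0}_{\tilde\beta}$, Lemma \ref{lem:contained} upgrades this to the $G$-effectiveness of $S^{\geq 0}_{\tilde\beta}$.

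For the reverse inclusion, I would construct a quasimap explicitly. Given $\tilde\beta$ with $S^{\geq 0}_{\tilde\beta}$ a $G$-effective anticone, choose $a\geq 1$ minimal so that $a\tilde\beta(\xi)\in \ZZ$ for every $\xi\in \chi(T)$ and pick $(c_i)\in V_{S^{\geq 0}_{\tilde\beta}}\cap X^{ss}(G)$. On $\wP{a}$ form the principal $T$-bundle $P$ whose associated line bundle $\cL_\xi$ has degree $\tilde\beta(\xi)$, and specify a $T$-equivariant map $P\to X$ by
\[
\sigma_i = \begin{cases}
c_i\, u^{\tilde\beta(\xi_i)} & i\in S^{\geq 0}_{\tilde\beta},\\
v^{a\tilde\beta(\xi_i)} & \tilde\beta(\xi_i)\in \QQ_{>0}\setminus \ZZ,\\
0 & \text{otherwise}.
\end{cases}
\]
This produces $\tilde q:\wP{a}\to [X/T]$ with $\tilde q(v=0)=(c_i)\in X^{ss}(G)$, and composition with $[X/T]\to [X/G]$ yields a quasimap of class $r^{X,T}_{X,G}(\tilde\beta)=\beta$.

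The main obstacle will be justifying the abelianization step in the first inclusion: every principal $G$-bundle on $\wP{a}$ must reduce to a principal $T$-bundle. I would derive this from Grothendieck--Harder on the coarse space $\PP^1$ combined with a $\mu_a$-equivariance analysis at the stacky point, or import it directly from the quasimap literature such as \cite{CCK}. A secondary technicality is representability of the $\tilde q$ constructed above: at $v=0$ the induced homomorphism $\mu_a\to T$ must factor through and inject into $\Stab_T((c_i))$, which is arranged by the minimality of $a$. The remaining manipulations with sections of line bundles on $\wP{a}$ are routine.
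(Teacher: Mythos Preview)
Your approach is essentially the same as the paper's: both directions go through abelianization of principal $G$-bundles on $\wP{a}$, evaluate the lifted section at the stacky point $v=0$ to land in $X^{ss}(G)\cap \CC^{S^{\geq 0}_{\tilde\beta}}$ for one inclusion, and build an explicit quasimap from a point of $V_{S^{\geq 0}_{\tilde\beta}}\cap X^{ss}(G)$ for the other; the paper packages this as an equivalence of four conditions in a separate lemma (including the $\GG_{m,\lambda}$-fixed variant), but the content is identical. Two small remarks: your ``iff'' in the first inclusion should be ``only if'' (a section with $\tilde\beta(\xi_i)\in\ZZ_{\geq 0}$ can still vanish at $v=0$), though only that direction is used; and in your explicit construction the $v^{a\tilde\beta(\xi_i)}$ terms are unnecessary---the paper simply takes $\sigma_i = c_i u^{\tilde\beta(\xi_i)}$ (zero when $i\notin S^{\geq 0}_{\tilde\beta}$), which already gives a $\GG_{m,\lambda}$-fixed quasimap.
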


This proposition is an immediate corollary of the next lemma. To read the lemma, recall from Section \ref{sec:formula} that for us a quasimap to $(X, G, \theta)$ is a representable morphism $q: \wP{a} \to [X/G]$ such that the point $v=0$ maps to $X\sslash G$. 
Such a morphism is determined by a principal $G$-bundle on $\wP{a}$ and an equivariant morphism from the principal bundle to $X$, or equivalently a vector bundle (associated to the principal $G$-bundle) and a section. Moreover, any vector bundle on $\wP{a}$ can be written explicitly as a direct sum of line bundles $\OO_{\wP{a}}(d)$, where $\OO_{\wP{a}}(d)$ is the line bundle with total space equal to the quotient stack $[ \AA^3 / \Gm]$ defined by the action
\[
 t \cdot (u, v, z) = (t^au, tv, t^dz)\quad \text{for}\quad t \in \Gm, (u, v, z) \in \AA^3
\]
and the projection to $\wP{a}$ is given by projection to the $u, v$ coordinates. 
% Recall also that a quasimap $q$ defines a morphism $\wP{a} \to BG$ by composition with the structure morphism $[X/G] \to BG$, and hence a principal $G$-bundle on $\wP{a}$. A quasimap is determined by the vector bundle on $\wP{a}$ associated to this principal bundle and a section of that vector bundle.

There is a $\Gm$ action on $\wP{a}$ given by 
\begin{equation}\label{eq:action}
\lambda \cdot (u, v) = (\lambda^a u, v),
\end{equation}
and this induces an action on quasimaps. We say that a quasimap $q$ is $\GG_{m, \lambda}$-\textit{fixed} if it is invariant under the $\Gm$ action in \eqref{eq:action}.
% \Nawaz{Is commutes the right word? Do we mean is invariant under?}

\begin{lemma}\label{lem:i-effective}
Let $\tilde \beta \in \Hom(\chi(T), \QQ)$. The following are equivalent.
\begin{enumerate}
\item There exists a ${\GG_{m, \lambda}}$-fixed quasimap to $(X, G, \theta)$ with a unique basepoint at $u=0$ whose associated vector bundle is isomorphic to $\oplus \OO_{\wP{a}}(a\tilde \beta(\xi_i))$ 
\item There exists a quasimap to $(X, G, \theta)$ whose associated vector bundle is isomorphic to $\oplus \OO_{\wP{a}}(a\tilde \beta(\xi_i))$ 
% \item The intersection of $X^{ss}(G)$ with the subspace $\{x_i = 0 | i \not \in S^{\geq 0}_{\tilde \beta}\}$ is not empty.
% The locus $X^{\tilde \beta} \cap X^{ss}(G)$ is not empty.
%Let $X^{\tilde \beta}\subset X$ be the subspace $\{x_i = 0 | i \not \in S^{\geq 0}_{\tilde \beta}\}$.
\item The set $S^{\geq 0}_{\tilde \beta}$ contains a $G$-effective anticone.
\item The set $S^{\geq 0}_{\tilde \beta}$ is a $G$-effective anticone.
\end{enumerate}
The image under $r^{X, T}_{X, G}$ of classes satisfying these four equivalent conditions is the set of $I^{X, G, \theta}$-effective classes. %\Rachel{$I^{X, G, \theta}$-effective?} 
%\todo{well, a priori it is a subset of the $I^{X\sslash G}$-effective classes whose contributions to the $I$-function are guaranteed not to vanish. Subset because the quasimap defining an $I$-effective class can have many basepoints and needn't be fixed. Informally discuss earlier why this is not a proper subset when $X=\CC^n$ (moduli space will have a fixed point, and all components of the fixed locus are isomorphic}
\end{lemma}

Before beginning the proof, we provide some background. 
% \Rachel{I think something is off about $X^{\tilde \beta}$. Check from here to the end of the section.}
% First, we define the subspace \Rachel{this is now defined in the intro}
% \begin{equation}\label{def:x-tilde-beta}X^{\tilde \beta}=\{(x_1, \ldots, x_n) \in X \mid x_i = 0 \;\text{for all}\; i \not \in S^{\geq 0}_{\tilde \beta}\}.
% \end{equation} 
A global section of $\oplus \OO_{\wP{a}}(a\tilde \beta(\xi_i))$ is a vector of polynomials 
$(p_i(u, v))_{i=1}^n$, where $p_i(u, v)$ is a homogeneous polynomial of degree $a\tilde \beta(\xi_i)$, where $u$ has weight $a$ and $v$ has weight 1. Recall the subspace $X^{\tilde \beta} \subset X$ from \eqref{def:x-tilde-beta-intro}. Finally, recall (e.g. from \cite[Sec~5.3]{CCK}) that a $\GG_{m, \lambda}$-fixed quasimap to $(X, T, \theta)$ of class $\tilde \beta$ with a unique basepoint at $u=0$ is uniquely determined by the image of $v=0$, which will be a point of $X^{\tilde \beta} \cap X^{ss}(T)$; conversely, given $\bx \in X^{\tilde \beta} \cap X^{ss}(T)$, the associated $\GG_{m, \lambda}$-fixed quasimap has section given by
\begin{equation}\label{eq:section}
(x_1 u^{a\tilde \beta(\xi_1)}, \ldots, x_n u^{a\tilde \beta(\xi_n)}).
\end{equation}

\begin{proof}[Proof of Lemma \ref{lem:i-effective}]

It is clear that (1) implies (2). Also, (3) and (4) are equivalent by Lemma \ref{lem:contained}.

For (2) implies (3) suppose we have a quasimap to $(X, G, \theta)$ whose associated vector bundle is isomorphic to $\oplus \OO_{\wP{a}}(a\tilde \beta(\xi_i))$ and let $\sigma$ be the section of this bundle defined by the quasimap. Because every principal $G$-bundle on $\wP{a}$ is induced from some principal $T$-bundle, the morphism $q$ lifts to a morphism $\wP{a} \to [X/T]$ given by the same vector bundle and section. 
%\Nawaz{Is this obvious to the reader? Might want a short explanation here?}\Rachel{better?}
Write $\sigma = (p_i(u, v))_{i=1}^n$, where $p_i(u, v)$ is a homogeneous polynomial of degree $a\tilde \beta(\xi_i)$ when $u$ has weight $a$ and $v$ has weight 1. Set
\[
\bp:= (p_1(1, 0), \ldots, p_n(1, 0)) \in X.
\]
Since $v=0$ is an orbifold point of $\wP{a}$, it is not a basepoint of the original quasimap $q$, so we have $\bp \in X^{ss}(G) \subset X^{ss}(T)$. 
Moreover, we claim that $\bp \in X^{\tilde \beta}$. This is because if $\tilde \beta(\xi_i) < 0$ then there are no homogeneous polynomials of degree $a\tilde \beta(\xi_i)$, so $p_i(u, v)$ is already 0; on the other hand, if $\tilde \beta(\xi_i) \not \in \ZZ$ then a polynomial of degree $a\tilde \beta(\xi_i)$ cannot have any pure-$u$ term, so $p_i(1, 0)=0$. 
In sum, we have found $\bp \in X^{\tilde \beta} \cap X^{ss}(T)$.
% \Rachel{I changed the reason given for why $\bp \in X^{\tilde \beta}$. Originally this referenced the discussion preceeding this proof, but I think that was wrong because the quasimap we're working with here is not $\Gm$-fixed with a unique basepoint at $u=0$. Does it look ok now?}\Nawaz{Looks good to me!}
% On the other hand, since $q$ defines a $\GG_{m, \lambda}$-fixed quasimap to $(X, T, \theta)$, by the discussion preceeding this proof we know $\bp \in X^{\tilde \beta} \cap X^{ss}(T)$. 
Thus there is an anticone $\sA$ such that $\bp \in U_{\sA}$; from the definitions of $X^{\tilde \beta}$ and $U_{\sA}$ we conclude $\sA \subset S^{\geq 0}_{\tilde \beta}$. Define 
\[\sA' := \{i \in \{1, \ldots, n\} \mid p_i(1, 0) \neq 0\}.\]
In other words, $\sA'$ is chosen so that $\bp \in V_{\sA'}$. We have $\sA \subset \sA' \subset S^{\geq 0}_{\tilde \beta}$, so by Lemma \ref{lem:contained} we have that $\sA'$ is an anticone contained in $S^{\geq 0}_{\tilde \beta}$. Since $\bp \in V_{\sA'} \cap X^{ss}(G)$ this anticone is $G$-effective.

% We may write $\sigma = (p_i(u, v))_{i=1}^n$ where $p_i(u, v)$ is a homogeneous polynomial of degree $a\tilde \beta(\xi_i)$, where $u$ has weight $a$ and $v$ has weight 1. The image of $v=0$ under the quasimap is equal to the $G$-orbit of the point
% \[
% \bp:= (p_1(1, 0), \ldots, p_n(1, 0)) \in X,
% \]
% so by our definition of quasimap, we have $\bp \in X^{ss}(G)$. If $\tilde \beta(\xi_i) < 0$ then $\Gamma(\wP{a}, \OO_{\wP{a}}(a\tilde \beta(\xi_i))=0$ and in particular $p_i(1,0)=0$. Finally, by the discussion in Section \ref{sec:formula} we know $\bp$ is fixed by $g_\tilde \beta$, so $p_i(1,0)=0$ if $\tilde \beta(\xi_i) \not \in \ZZ$ (see the proof of Lemma \ref{lem:compute-stab}).

% For (3) implies (4), let $\bx = (x_1, \ldots, x_n) \in X^ss(G)$ and suppose $x_i=0$ if $i \not \in S^{\geq 0}_{\tilde \beta}$. Let $\sA = \{i \mid x_i \neq 0$. Then $\sA$ is contained in $S^{\geq 0}_{\tilde \beta}$ by definition

% For (2) implies (3), let $x = \sum c_i x_i \in X^{\tilde \beta} \cap X^{ss}(G)$. Let $\sA = \{ i \mid c_i \neq 0\}$. Clearly $V_\sA \cap X^{ss}_\theta(G)$ is not empty, and we have $\sA \subset S^{\tilde \beta}$ by definition of $X^{\tilde \beta}$.

For (4) implies (1), assume $S^{\geq 0}_{\tilde \beta}$ is $G$-effective. Then $S^{\geq 0}_{\tilde \beta}$ is an anticone and there is a point $\bx \in V_{S^{\geq 0}_{\tilde \beta}} \cap X^{ss}(G)$. But there is a containment $V_{S^{\geq 0}_{\tilde \beta}} \subset X^{\tilde \beta}$, so we have $\bx \in X^{\tilde \beta} \cap X^{ss}(T)$ and by the discussion preceeding this proof we get a $\GG_{m, \lambda}$-fixed quasimap to $(X, T, \theta)$ that is given by the vector bundle $\oplus \OO_{\wP{a}}(a\tilde \beta(\xi_i))$ and section \eqref{eq:section}. Since $\bx \in X^{ss}(G)$ this data in fact defines a quasimap to $[X/G]$.

The final assertion of the Lemma follows from condition (2) and the definition of $I$-effective classes.
\end{proof}

A $G$-effective anticone $\cA$ is \textit{minimal} if whenever $\cB$ is a $G$-effective anticone with $\cB \subset \cA$, we have $\cB=\cA$.
%\Nawaz{Minimality is introduced but isn't used again? Feels like this definition comes out a bit randomly, although I understand the lemma makes the definition make sense}
 To compute the $I$-effective classes in practice, one first computes the minimal $G$-effective anticones (this requires computation of $X^{ss}_\theta(G)$). For each $G$-effective anticone $\sA$ we get a set 
 \begin{equation}\label{eq:CA}
 C_\sA := \{\tilde \beta \in \Hom(\chi(T), \QQ) \mid \tilde \beta(\xi_i) \in \ZZ_{\geq 0} \;\text{for each}\;i \in \sA\}.
\end{equation}
  The set of $I$-effective classes is equal to the union of the images of the sets $C_{\sA}$ under the natural map $\Hom(\chi(T), \QQ) \to \Hom(\chi(G), \QQ)$.
% To compute the $I$-effective classes in practice, one first finds the minimal $G$-effective anticones (a minimial $G$-effective anticone is one that does not properly contain any $G$-effective anticone).\todo{what if the larger anticones allow more denominators?} After choosing coordinates for $X$ and $T$, each $G$-effective anticone produces a set of equations on $\Hom(\chi(T), \QQ)$. By the above lemma, the union of the solutions to all these equations is the set of $I^{X\sslash G}$-effective classes. \todo{need to take image.}

\begin{remark}
A word of caution is warranted about the above computation. Observe that each $C_\sA$ generates a cone in $\Hom(\chi(T), \RR)$. It is possible to have two anticones $\sB, \sA$ such that that the cone generated by $C_{\sB}$ is contained in the cone generated by $C_{\sA}$, but $C_{\sB}$ is not contained in $C_{\sA}$: the set $C_{\sB}$ could allow larger denominators than those allowed in $C_{\sA}$. This happens in Section \ref{sec:bs3-ieffective}.
\end{remark}

\subsection{CI-GIT presentations}\label{sec:vanish-cigit}
Let $(X, G, \theta, E, s)$ be a CI-GIT presentation and let $Y = Z(s)$. Then in particular $(X, G, \theta)$ is a GIT presentation, and Lemma \ref{lem:i-effective} gives equivalent conditions for $\tilde \beta \in \Hom(\chi(T), \QQ)$ to map to an $I^{X, G, \theta}$-effective class. The next lemma is a refinement for the quasimap target $(Y, G, \theta)$.
% \Rachel{Use the phrase "quasimap target" earlier a few times}
% \Nawaz{Was the triple $(Y, G, \theta)$ used before? It's clear what it means tbh but might need to clarify for reader}

\begin{lemma}\label{lem:i-effective-cigit}
Let $\tilde \beta \in \Hom(\chi(T), \QQ)$. The following are equivalent:
\begin{enumerate}
\item $r^{X, T}_{X, G}(\tilde \beta)$ is equal to the image of an $I^{Y, G, \theta}$-effective class.
\item $X^{\tilde \beta} \cap Y^{ss}(G) = X^{\tilde \beta} \cap X^{ss}(G) \cap Y$ is not empty.
\end{enumerate}
In particular, the image of $\KK^{\geq 0}(Y, G, \theta)$ in $\Hom(\chi(G), \QQ)$ is equal to the image of the $\tilde \beta$ such that either of these conditions holds.
\end{lemma}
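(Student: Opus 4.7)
My plan is to adapt the proof of Lemma \ref{lem:i-effective} to the complete intersection setting, using that $Y \subset X$ is closed and $T$-invariant. Both directions will be reduced to transferring between a quasimap to $(X, T, \theta)$ and a quasimap to $(Y, G, \theta)$, and then identifying classes via the commuting square of restriction maps in Section \ref{sec:formula}.

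For $(2) \Rightarrow (1)$, I would begin with a point $\bx \in X^{\tilde\beta} \cap Y^{ss}(G)$. Since $Y^{ss}(G) \subset X^{ss}(T)$, the construction in the proof of Lemma \ref{lem:i-effective}$(4) \Rightarrow (1)$ produces a $\GG_{m, \lambda}$-fixed quasimap $q_T : \wP{a} \to [X/T]$ of class $\tilde\beta$ whose section is \eqref{eq:section}. The crucial new observation is that $q_T$ factors through $[Y/T]$: at every point of $\wP{a}$, the image of the section lies in the orbit closure of $\bx$ under the one-parameter subgroup of $T$ determined by $\tilde\beta$, which is contained in $Y$ because $Y$ is closed and $T$-invariant and $\bx \in Y$. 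Composing with $[Y/T] \to [Y/G]$ yields a quasimap $q : \wP{a} \to [Y/G]$; the quasimap condition at $v = 0$ is inherited from $\bx \in Y^{ss}(G)$, and the resulting class $\delta$ satisfies $r^{Y, G}_{X, G}(\delta) = r^{X, T}_{X, G}(\tilde\beta)$ by the commuting square, yielding $(1)$.

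For $(1) \Rightarrow (2)$, starting from a quasimap $q: \wP{a} \to [Y/G]$ of class $\delta$ with $r^{Y, G}_{X, G}(\delta) = r^{X, T}_{X, G}(\tilde\beta)$, I would compose with the closed immersion $[Y/G] \hookrightarrow [X/G]$ and lift to a quasimap $\wP{a} \to [X/T]$ via a $T$-reduction whose associated vector bundle decomposes as $\bigoplus \OO_{\wP{a}}(a\tilde\beta(\xi_i))$, following the strategy of Lemma \ref{lem:i-effective}$(2) \Rightarrow (3)$. Arguing as in that proof, evaluation at $v = 0$ produces a point $\bp \in X^{\tilde\beta} \cap X^{ss}(G)$, and since $q$ factors through $[Y/G]$ and $v=0$ is not a basepoint, we also have $\bp \in Y$, which gives $(2)$. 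The main obstacle is arranging the $T$-reduction of the given $G$-bundle so that its associated vector bundle decomposition recovers the specific $\tilde\beta$ we started with, rather than some other preimage of $r^{X, T}_{X, G}(\tilde\beta)$; this relies on uniqueness (up to reordering) of vector bundle decompositions on $\wP{a}$ in the style of Grothendieck's theorem, which pins down the $T$-class from the vector bundle type. Finally, the ``in particular'' assertion follows immediately from the pointwise equivalence by taking images in $\Hom(\chi(G), \QQ)$.
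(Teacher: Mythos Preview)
Your argument for $(2) \Rightarrow (1)$ is correct and is exactly the paper's approach: the one additional observation needed beyond Lemma~\ref{lem:i-effective} is that the explicit section \eqref{eq:section} lands in $Y$ at every point of $\wP{a}$, and your orbit-closure argument supplies this.

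For $(1) \Rightarrow (2)$ you have correctly identified the obstacle, but your proposed fix does not work. Grothendieck-style uniqueness tells you that the splitting type of the associated vector bundle is well-defined, but that splitting type is determined by the principal $G$-bundle of the witnessing quasimap, not by the class $\beta = r^{X,T}_{X,G}(\tilde\beta)$ alone; nothing forces it to equal $\bigoplus \OO_{\wP{a}}(a\tilde\beta(\xi_i))$ for the particular $\tilde\beta$ you started with. In fact the pointwise equivalence fails as written: in the weighted Grassmannian of Section~\ref{sec:wgr} (with $Y = X$), take $\tilde\beta = (1/2,1/2)$. Then $r(\tilde\beta) = 1 \in \KK^{\geq 0}(X,G,\theta)$, so (1) holds, yet every $T$-weight $\xi_i$ has $\tilde\beta(\xi_i) \notin \ZZ$, so $X^{\tilde\beta} = \{0\}$ and (2) fails. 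The paper's own proof glosses over the same point by simply asserting that the witnessing quasimap has vector bundle $\bigoplus \OO_{\wP{a}}(a\tilde\beta(\xi_i))$. What the argument (yours and the paper's) actually establishes is the weaker statement that $\beta$ lies in the image of $\KK^{\geq 0}(Y,G,\theta)$ if and only if \emph{some} $\tilde\beta'$ with $r(\tilde\beta') = \beta$ satisfies (2); this is precisely the ``in particular'' clause, and it is what is used elsewhere in the paper.
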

\begin{proof}
Assume (1). Then we have a quasimap $q: \wP{a} \to [Y/G] \subset [X/G]$ with associated vector bundle $\oplus \OO_{\wP{a}}a\tilde \beta(\xi_i)$. We apply the proof of Lemma \ref{lem:i-effective}, specifically the argument for $2 \implies 3$, to produce $\bp \in X^{ss}(G) \cap Y \cap X^{\tilde \beta}$.

Assume (2). Let $\bx \in X^{\tilde \beta} \cap Y^{ss}(G) \subset X^{\tilde \beta} \cap X^{ss}(T)$. We apply the proof of Lemma \ref{lem:i-effective}, specifically the argument for $4 \implies 1$, to get a map to $(X, T, \theta)$ with associated vector bundle $\oplus \OO_{\wP{a}}a\tilde \beta(\xi_i)$ and section \eqref{eq:section}. Since $\bx \in Y^{ss}(G)$ this data defines a quasimap to $(Y, G, \theta)$. 
\end{proof}

\subsection{Asymptotics of the $I$-function}\label{sec:implications}
% \Nawaz{Delete  ``Implications for" ?}
Let $(X, G, \theta, E, s)$ be a CI-GIT presentation and set $Y = Z(s)$. By \cite[Thm~3.9(2)]{CCK}, the $I$-function $I^{Y, G, \theta}(q, z)$ is homogeneous of degree 0 when we set $\deg (z)=1$, $\deg(q^\beta) = \beta(\det(T_{Y\sslash G}))$, and $\deg(\alpha) = k + \age_{Y\sslash G}(g)$ for $\alpha \in H^k(\In{Y\sslash G}; \QQ)$ 
% \Nawaz{Why not just say $\alpha$ is given it's degree in Chen Ruan cohomology? We even introduced the ring earlier, and saves trouble of defining age}\Rachel{we use age later}
(the quantity $\age_{Y\sslash G}(g) \in \QQ_{\geq 0}$ is defined in \cite[Sec~2.5.1]{CCK15}).
%\Nawaz{Used AGV ref instead of CCK for age}\Rachel{cck is more direct?}
%[cite cck sec 2.5.1]).

% $\alpha \in A_k((I_\mu(X\sslash G)_g)$ is $ k+\age_{[X/G]}(g)$. Recall that if $g \in G$ is an element of order $r$, the age of $g$ on the representation $V$ is the sum
% \[
% \sum_{j=0}^{r-1} \frac{j}{r}\dim(E_j)
% \]
% where $E_j \subset V$ is the subspace with weight $e^{2 \pi i (j/r)}$. We define $\age_{[X/G]}(g)$ to be the age of $g$ on the representation $T_x(X\sslash G)$ where $x \in X\sslash G $ is any geometric point \todo{does it have to be fixed by $g$?? why is this independent of $x$?}.
Hence, bounding the degree of $q^\beta$ for $\beta \in \KK^{\geq 0}{(Y, G, \theta)}$ has implications for the asymptotics of $I^{Y, G, \theta}(q, z)$ with respect to $z$---for example, if $\deg(q^\beta) \geq 1$ for nontrivial $\beta \in \KK^{\geq 0}{(Y, G, \theta)}$ then $I^{Y, G, \theta}(q, z) = \one + \OO(z^{-1})$. To bound the degree of $q^\beta$ we use the following lemma.

\begin{lemma}\label{lem:bound-degree}
Let $(X, G, \theta, E, s)$ be a CI-GIT presentation with $Y = Z(s)$ and let $T \subset G$ be a maximal torus. Let $\xi_1, \ldots, \xi_n$ (resp. $\epsilon_1, \ldots, \epsilon_r$) be the weights of $X$ (resp. E) with respect to $T$, and define $\pmb{\xi}:=\sum_{i=1}^n \xi_i$ and $\pmb{\epsilon}: = \sum_{i=1}^r \epsilon_i$. Let $ B$ be the set of $\tilde \beta \in \Hom(\chi(T), \QQ)$ satisfying the equivalent conditions of Lemma \ref{lem:i-effective-cigit}. Then,
\begin{enumerate}
\item For any $\delta \in \KK^{\geq 0}(Y, G, \theta)$ and any $\tilde \beta \in B$ satisfying $r^{Y, G}_{X, G}(\delta) = r^{X, T}_{X, G}(\tilde \beta)$, we have $\deg(q^{\delta}) = \tilde \beta(\bxi - \pmb{\epsilon}).$
\item  We have
\[
\displaystyle \mathrm{min}_{\delta \in \KK^{\geq 0}(Y, G, \theta)} \delta(\det(T_{Y\sslash G})) = \displaystyle \mathrm{min}_{\tilde \beta \in B} \tilde \beta(\pmb{\xi}-\pmb{\epsilon}).
\]
\end{enumerate}

\end{lemma}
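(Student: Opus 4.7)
The plan is to prove part (1) by identifying $\det(T_{Y\sslash G})$ with the restriction of an explicit $G$-equivariant line bundle $\cL_{\chi}$ on $Y$, where $\chi \in \chi(G)$ satisfies $\chi|_T = \bxi - \pmb{\epsilon}$, and then chasing the commutative square of restriction maps to match the pairings with $\delta$ and $\tilde\beta$. Part (2) will follow immediately from part (1) combined with Lemma \ref{lem:i-effective-cigit}, which guarantees that $r^{Y, G}_{X, G}(\KK^{\geq 0}(Y, G, \theta)) = r^{X, T}_{X, G}(B)$ inside $\Hom(\chi(G), \QQ)$.

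First I would construct $\chi$. Since $X$ is a vector space, $T_X$ is the trivial $G$-equivariant bundle on $X$ and $\det(T_X) = \cL_{\chi_X}$ where $\chi_X \in \chi(G)$ is the determinant of the representation; by construction $\chi_X|_T = \bxi$. Likewise $\det(E) = \cL_{\chi_E}$ with $\chi_E|_T = \pmb{\epsilon}$. Because $G$ is connected reductive, the adjoint representation has trivial determinant character: its restriction to $T$ has weights equal to the roots (summing to zero by $\pm$-symmetry of the root system) together with zero weights from $\mathfrak{t}$, and a character of a connected group is determined by its restriction to $T$. Hence the Lie-algebra contribution to $\det(T_{[X/G]})$ is trivial, and the complete-intersection short exact sequence
\[0 \to T_{Y\sslash G} \to T_{X\sslash G}|_{Y\sslash G} \to E|_{Y\sslash G} \to 0\]
on the smooth locus $Y^{ss}(G) \cap Y$ yields $\det(T_{Y\sslash G}) = \cL_{\chi}|_Y$ with $\chi := \chi_X - \chi_E$.

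For part (1), I would use the definitions of $r^{Y, G}_{X, G}$ (dual to restriction along $Y \hookrightarrow X$) and $r^{X, T}_{X, G}$ (dual to viewing a $G$-equivariant line bundle as a $T$-equivariant one), together with the hypothesis $r^{Y, G}_{X, G}(\delta) = r^{X, T}_{X, G}(\tilde \beta)$, to compute
\[\delta(\det(T_{Y\sslash G})) = r^{Y, G}_{X, G}(\delta)(\cL_\chi) = r^{X, T}_{X, G}(\tilde \beta)(\cL_\chi) = \tilde\beta(\cL_{\chi|_T}) = \tilde\beta(\bxi - \pmb{\epsilon}).\]
No choice of $\tilde\beta$ enters this computation, so the identity holds for every $\tilde\beta$ satisfying the hypothesis. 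For part (2), Lemma \ref{lem:i-effective-cigit} produces, for each $\delta \in \KK^{\geq 0}(Y, G, \theta)$, some $\tilde\beta \in B$ with $r^{Y, G}_{X, G}(\delta) = r^{X, T}_{X, G}(\tilde\beta)$, and conversely; by part (1) the two resulting values agree, so the two sets $\{\delta(\det T_{Y\sslash G}) : \delta \in \KK^{\geq 0}(Y, G, \theta)\}$ and $\{\tilde\beta(\bxi - \pmb{\epsilon}) : \tilde\beta \in B\}$ coincide, and in particular have equal minima.

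The step I expect to be the main obstacle is justifying the identification $\det(T_{Y\sslash G}) = \cL_{\chi}|_Y$ rigorously as a statement about classes paired against $\delta \in \Hom(\Pic^G(Y), \QQ)$, since $Y$ may be singular away from $Y^{ss}(G)$ and $T_Y$ only makes sense on the smooth locus. The cleanest resolution is to observe that $\cL_\chi$ is already a $G$-equivariant line bundle on all of $X$, hence on all of $Y$, so $\delta(\cL_\chi|_Y)$ is unambiguous; on $Y^{ss}(G) \cap Y$ it agrees with $\det(T_{Y\sslash G})$ by the short exact sequence above, and this is all that is needed since any quasimap of class $\delta$ maps a cofinite subset of its source into $Y^{ss}(G)$.
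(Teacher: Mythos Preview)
Your proof is correct and follows essentially the same route as the paper: both use the normal-bundle exact sequence for $Y\sslash G \hookrightarrow X\sslash G$ together with the Euler sequence on $X\sslash G$ (and the fact that the roots come in $\pm$-pairs) to identify $\det(T_{Y\sslash G})$ with the restriction of $\cL_{\chi_X-\chi_E}$. The only organisational difference is that the paper chooses an explicit quasimap $q$ of class $\delta$, composes to $[X/G]$, lifts to $[X/T]$, and reads off $\tilde\beta(\bxi)$ as the degree of the pulled-back line bundle, whereas you phrase the same identification purely in terms of the commutative square of restriction maps $r^{\,\cdot\,}_{\,\cdot\,}$; your formulation makes the ``for any $\tilde\beta$'' part of (1) immediate, while in the paper it is implicit in the fact that both sides factor through $\beta=r^{Y,G}_{X,G}(\delta)=r^{X,T}_{X,G}(\tilde\beta)$.
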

% \Nawaz{Ok maybe need to re-introduce or recall the $\epsilon_i$? I think this was only used once before in passing, and totally forgot about it so when I read this I got confused what they were}
\begin{remark}
When $E=0$, i.e. we are dealing with a GIT presentation $(X, G, \theta)$, the set $B$ in Lemma \ref{lem:bound-degree} is equal to $\bigcup C_{\sA}$, the set of all $\tilde \beta$ such that $S^{\geq 0}$ is a $G$-effective anticone.
\end{remark}
\begin{proof}[Proof of Lemma \ref{lem:bound-degree}]
If $\delta$ is $I^{Y, G, \theta}$-effective then there is a quasimap $q: \wP{a} \to [Y/G]$ of class $\delta$. Let $\beta$ be the class of the composition $q': \wP{a} \xrightarrow{q} [Y/G] \to [X/G].$ Since every principal $G$-bundle on $\wP{a}$ comes from a principal $T$-bundle, this lifts to a quasimap ${q}'': \wP{a} \to [Y/T] \subset [X/T]$; let $\tilde \beta$ denote the class of $q''$. By Lemma \ref{lem:i-effective} we see that $S^{\geq 0}_{\tilde \beta}$ is a $G$-effective anticone for $(X, G, \theta)$, and $Y^{ss}(G) \cap X^{\tilde \beta}$ is not empty so $\tilde \beta \in B$. 
Since $s$ is regular, we can identify $E_{Y\sslash G}$ with the normal bundle of $Y\sslash G$ in $X\sslash G$ and so there is an exact sequence
\[
0 \to T_{Y\sslash G} \to T_{X\sslash G}|_{Y\sslash G} \to E_{Y\sslash G} \to 0
\]
and it follows that
\begin{align}
\delta(\det(T_{Y\sslash G})) &= \delta(\det(T_{X\sslash G})|_{Y\sslash G}) - \delta(\det(E_{X\sslash G})|_{Y\sslash G})\label{eq:thing0}\\
&= \beta(\det(T_{X\sslash G})) - \beta(\det(E_{X\sslash G})).\notag
\end{align}
% \Nawaz{Is it obvious $E_{X\sslash G}|_{Y\sslash G}$ is same as $E_{Y\sslash G}$? It kinda is, but this idenfication is implicitly being used}
Similarly, there is a generalized Euler sequence (see \cite[811]{CCK}) on $X\sslash G$ as follows:
%\todo{add page number from pub version; p.37 in arxiv version}
\[
0 \to X^s(G)\times_G \mathfrak{g} \to X^s(G)\times_G X \to \varphi^*T_{X\sslash G} \to 0.
\]
% Similarly, we have two exact sequences
% \[
% 0 \to T_{X\sslash_G T / X\sslash_G G} \to T_{X\sslash_G T} \to \varphi^*T_{X\sslash G} \to 0
% \]
% \[
% 0 \to X^s(G)\times_T \mathfrak{t} \to X^s(G)\times_T X \to T_{X\sslash T} \to 0
% \]
%  on $X\sslash_G T$. (Observe that this is dual to the standard exact sequence of cotangent sheaves for a smooth morphism of smooth Deligne-Mumford stacks.) The representation that determines $T_{X\sslash_G T}$ has weights equal to the roots of $G$ with respect to $T$, 
 Here $\mathfrak g$ is the Lie algebra with its adjoint $G$-action, and if $F$ is a $G$-representation we define 
 $X^s(G)\times_G F := [X^s(G)\times F/G]$, where $G$ acts diagonally on $X^s(G)\times F$.
 Since the weights of $\mathfrak{g}$ come in $+/-$ pairs, it follows that
\begin{equation}\label{eq:thing1}
\beta(\det(T_{X\sslash G})) = \beta(\det(X^s(G)\times_G X)).
\end{equation}
% where $X^s(G)\times_G X := [X^s(G)\times X/G]$ is the tautological bundle on $X\sslash G$.
% \Nawaz{Find citation for euler sequence. Quasimaps paper (nonorbi one)} \Nawaz{Clarify $G$-action. Mixing space is diagonal action for us right? I think it is more standardly off-diagonal in non-qmap stuff}
 %\Rachel{The diagonal action is the only one possible here.}
%\Nawaz{What happens to Lie group bundle?}
%\Rachel{edited. better?}
By definition, the right hand side of \eqref{eq:thing1} is equal to
\[
 \deg( (q')^*(\det(X^s(G)\times_G X)) = \deg( (q'')^*\varphi^*\det(X^s(G)\times_G X)) = \deg (q'')^*\cL_{\pmb{\xi}} = \tilde \beta (\pmb\xi).
\]
Likewise $\beta(\det(E_{X\sslash G})) = \tilde \beta(\pmb{\epsilon})$. This shows that $\deg(q^\delta) = \tilde \beta(\bxi-\pmb{\epsilon}).$

Conversely, if $\tilde \beta \in B$ then there is a quasimap $q$ to $(Y, G, \theta)$ that lifts to a quasimap to $(X, T, \theta)$ of degree $\tilde \beta$. If $\delta$ is the degree of $q$, the same computation as before shows that $\delta(\det(T_{Y\sslash G})) = \tilde \beta(\pmb \xi - \pmb \epsilon)$.
\end{proof}
% For convenient reference, we record the lemma statement for GIT-presentations.
% \begin{corollary}\label{cor:bound-degree}
% Let $(X, G, \theta)$ be a GIT presentation and let $T \subset G$ be a maximal torus. Then
% \[
% \displaystyle \mathrm{min}_{\beta \in \KK^{\geq 0}(X,G, \theta)} \beta(\det(T_{X\sslash G})) = \displaystyle \mathrm{min}_{\tilde \beta \in \cup C_{\sA}} \tilde \beta(\pmb{\xi})
% \]
% where $ \cup C_{\sA}$ is the set of $\tilde \beta$ such that $S^{\geq 0}$ is a $G$-effective anticone, and $\pmb{\xi}$ is equal to $\sum_{i=1}^n \xi_i.$
% \end{corollary}

% \begin{corollary}\label{cor:degree-is-ambient}
% For $\delta \in \Hom(\Pic^G(Y), \QQ)$, the quantity $\deg(q^\delta)$ depends only on the class $r^{Y, G}_{X, G}(\delta) \in \Hom(\Pic^G(X), \QQ)$.
% \end{corollary}
% \begin{proof}
% This follows from \eqref{eq:thing0} and \eqref{eq:thing1}.
% \note{reason: $\det(T_{Y\sslash G})$ can be resolved by homogeneous vector bundles, which are pulled back from $X\sslash G$, so what $\delta$ does to $\det(T_{Y\sslash G})$ is determined by what $r(\delta)$ does to line bundles. If the $r(\delta)$s are the same, so are the $\delta(\det(T_{Y\sslash g}))$. }
% \end{proof}

\subsection{Extending by a Weyl-invariant sector}
%\Rachel{I'm removing blue highlighting from this section. The old version, with blue highlighting, is after the ``end document'' at the end of this file. This used to be section 3.2.}
In this section we prove Theorem \ref{thm:intro}, stated more precisely as Theorem \ref{thm:only} below. In order to make this more careful statement, we introduce the following definitions.

Let $E \subset \Hom(\chi(G), \QQ) \simeq \QQ^{\mathrm{rank}(G)}$ be a finitely generated monoid. Recall that the \textit{groupification} of $E$ is the subgroup $E^{gp} \subset \Hom(\chi(G), \QQ)$ of elements that can be written as $\alpha - \beta$ for some $\alpha, \beta \in E$. The \textit{saturation} of $E$ is the submonoid $E^{sat} \subset \Hom(\chi(G), \QQ)$ of elements $\beta \in E^{gp}$ with the property that $n\beta \in E$ for some $n \in \ZZ_{\geq 0}$.
Let $\QQ[E]$ denote the monoid algebra with elements $q^\alpha$ for $\alpha \in E$. If $\nu: \QQ[E] \to \ZZ$ is a valuation, then $\nu$ has a natural extension to $\QQ[E^{gp}]$ and hence to $\QQ[E^{sat}]$ by defining
\begin{equation}\label{eq:valuation}
\nu(q^{\alpha - \beta}) := \nu(q^\alpha) - \nu(q^\beta) \quad \quad \quad \quad \text{for}\;\alpha, \beta \in E.
\end{equation}

%\Rachel{I tweaked this definition: now, $I$-effective classes are really $I^T$-effective classes. This doesn't affect I-functions at all but somehow I needed it to prove the desired isomorphism of novikov rings. In other words, I could see no reason why after factoring a $t$ out of an extended novikov variable, the remaining bit might only be effective for the torus quotient, not for the $G$ quotient. I also couldn't find an example of this phenomenon but . . . time.}
\begin{definition}\label{def:i-eff nov}
Let $(X, G, \theta)$ be a GIT presentation and let 
\[\Ieff(X, G, \theta) \subset \Hom(\Pic^G(X), \QQ)\] 
be the saturation of the monoid generated by $I^{X, T, \theta}$-effective classes, where $T \subset G$ is a maximal torus and we view the $I^{X, T, \theta}$-effective classes as a subset of $\Hom(\Pic^G(X), \QQ)$ via the morphism $r^{X, T}_{X, G}$.
The \emph{$I$-effective Novikov ring} $\Lambda_{X, G, \theta}^{\Ieff}$ is the completion of $\QQ[\Ieff(X, G, \theta)]$ with respect to the induced valuation \eqref{eq:valuation}.
\end{definition}

\begin{remark}
One can show that the definition of $\Ieff(X, G, \theta)$ is independent of the choice of maximal torus $T$.\note{It is enough to show that the image of the $I^{X, T, \theta}$-effective classes is independent of $T$. Say $gTg^{-1} = T'$. Then if $x$ is a weight vector for $T$ with weight $\xi$ we have $t\cdot x = \xi(t)x$ so $gtg^{-1}\cdot gx = \xi(t) gx$ so $\xi' := \xi(g^{-1} \bullet g)$ is a weight for $T'$. If $\theta = \sum a_i\xi_i$ with $a_i >0$ then $\theta(g^{-1}\bullet g) = \sum a_i\xi_i(g^{-1}\bullet g)$ but $\theta$ is a character of $G$ so we have $\theta  = \sum a_i \xi_i'$. This shows that $T$-anticones are the same as $T'$-anticones. Also $\tilde \beta$ maps to $\beta$ means for every $\chi \in \chi(G)$ we have $\tilde \beta(\chi|_T) = \beta(\chi) \in \QQ$. Now assume $\beta$ is in the image of the $T$-effective classes, so we are given $\tilde \beta$. We want to show it is also the image of the $T'$-effective classes, so we need to construct $\tilde \beta'$. We define $\tilde \beta'(\xi') = \tilde \beta(\xi'(g \bullet g^{-1})$ noting that $\xi'(g \bullet g^{-1}) \in \chi(T)$. You can check that this also maps to $\beta$. There is a $T$-anticone $\cA$ such that $\tilde \beta(\xi_i) \in \ZZ_{\geq 0}$ for all $i \in \cA$. We've seen that $\cA$ is also a $T'$-anticone and you can check that $\tilde \beta'(\xi'_i) = \tilde \beta(\xi_i) \in \ZZ_{\geq 0}$. This shows $\tilde \beta'$ is $T'$-effective.}
\end{remark}
Observe that the $I$-effective Novikov ring $\Lambda_{X, G, \theta}^{\Ieff}$ and the Novikov ring $\Lambda_{X, G, \theta}$ share a common subring, namely the completion of the monoid algebra generated by $I$-effective classes.
%\Nawaz{what do you mean by Novikov ring here? the original before saturation} \Rachel{better?}

\begin{example}
If $X = \AA^5$ and $G = \Gm$ acts with weights $(1, 1, 1, 2, 3)$, and if $\theta$ is the identity character, then the monoid generated by $I$-effective classes is the subset
\[
\{0, 1/3, 1/2, 4/6, 5/6\} \cup \{1+n/6 \mid n \in \ZZ_{\geq 0}\}
\]
of $\QQ$. The monoid $\Ieff(X, G, \theta)$ is equal to $(1/6)\ZZ_{\geq 0}$.
\end{example}

\begin{theorem}\label{thm:only}
Let $(X, G, \theta)$ be a GIT presentation and let $T \subset G$ be a maximal torus with Weyl group $W$. If $g \in T$ is a Weyl-invariant element such that $\In{X\sslash G}_{(g)}$ is nonempty, then 
 there is an extended GIT presentation $(X', G', \theta')$ for $X\sslash G$ and an isomorphism
\begin{equation}\label{eq:nov iso}
\Lambda^{\Ieff}_{X, G, \theta} \llbracket t \rrbracket \simeq \Lambda^{\Ieff}_{X', G', \theta'}
\end{equation}
such that 
\[
\mu' = \mu + t \one_{g} + tO(t, q)
% \mu' = \mu + t \one_{g}z^{-1} + tO(t, q)
\]
where $\mu$ (resp. $\mu'$) is the mirror map associated to $(X, G, \theta)$ (resp. $(X', G', \theta')$) and $q$ is in the $I$-effective Novikov ring $\Lambda^{\Ieff}_{X, G, \theta}$.
\end{theorem}
%\Rachel{I removed a $z^{-1}$ in the formula for $\mu'$. I did not change the proof at all. Nawaz, do you agree that this is now correct?}

\begin{remark}
The Novikov ring used in \cite{CCIT15} differs slightly from the ring $\Lambda_{\cX}$ used in this paper: in particular, it is already saturated in the sense of Definition \ref{def:i-eff nov}. Hence one does not see a larger Novikov ring introduced in \cite[Def 28]{CCIT15}.
\end{remark}

\begin{proof}[Proof of Theorem \ref{thm:only}] We present the proof in three steps (leaving some technical lemmas for the end).\\ 
%\Nawaz{with some techincal lemmas proved provided at the end}

\noindent
\emph{Construction of the extended presentation.} 
%\Rachel{This step has only changed as indicated in blue.}
Let $B_g = \{\tilde \beta \in \Hom(\chi(T),\QQ) \mid g_{\tilde \beta} = g^{-1}\}$. 
Recall that $W$ acts on $\Hom(\chi(T), \QQ)$.
Choose some Weyl-invariant\note{not all $\tilde \beta \in B_g$ will be Weyl-invariant, even though $g$ is. for example, $g$ could be the identity group element, and $\tilde \beta$ could be $(1,0)$ or $(0,1)$ and these are not weyl-invariant.} $\tilde \beta \in B_g$ (the class $\tilde \beta$ need not be $I$-effective).
%\Rachel{Choose some Weyl-invariant\note{not all $\tilde \beta \in B_g$ will be Weyl-invariant, even though $g$ is. for example, $g$ could be the identity group element, and $\tilde \beta$ could be $(1,0)$ or $(0,1)$ and these are not weyl-invariant.} $\tilde \beta \in B_g$ such that $-\tilde \beta$ is $I$-effective. This is always possible: first, $B_g$ contains a Weyl-invariant $\tilde \beta$ by hypothesis. Next, assumption that $\In(X\sslash G)_{(g)}$ is nonempty implies that there is a $G$-effective anticone $\cA$ such that $\tilde \beta(\xi_i) \in \ZZ$ for all $i \in \cA$. UMMM????}
Fix a weight basis for $X$ with corresponding (not necessarily distinct) $T$-weights $\xi_1, \ldots, \xi_n$, and choose the basis elements so that each is contained in an irreducible $G$-subrepresentation of $X$. For $\alpha \in \QQ$, define $\lceil\alpha \rceil$ to be the smallest integer $n$ such that $n \geq \alpha$. In the coordinates determined by the weight basis, we define
\[
\nu(t) :=(t^{-\lceil \tilde \beta(\xi_1)\rceil},\, \ldots \,,\,t^{-\lceil \tilde \beta(\xi_n)\rceil}).
\]

We show that $\nu(t)$ satisfies the hypotheses of Proposition \ref{prop:extend-presentation}. The first step is to show that if $X' \subset X$ is an irreducible $G$-representation, then $X'$ is contained in a weight space of $\nu(t)$ (Remark \ref{rmk:nu}). It is enough to show that the numbers $\tilde \beta(\xi_i)$ are the same for each weight $\xi_i$ of $X'$. Consider the 1-parameter subgroup $\tau_{\tilde \beta}: \Gm \to G$ defined by 
\[
\zeta(\tau_{\tilde \beta}(t)) = t^{a \tilde \beta(\zeta)} \quad \quad \quad\text{for each}\;\zeta \in \chi(T),
\]
where $a \in \ZZ_{>0}$ is minimal such that $a\tilde \beta(\zeta) \in \ZZ$ for every $\zeta \in \chi(T)$ (see \cite[Lem~4.6]{CCK15}). It is enough to show that $X'$ is contained in a weight space of $\tau_{\tilde \beta}$. But $\tau_{\tilde \beta}$ is Weyl-invariant, since $\tilde \beta$ is, so this follows from Lemma \ref{lem:weyl-weight} below.
The other step is to demonstrate the existence of $r$ and $\nu'$. We set $\nu' := \tau_{\tilde \beta}$ and $r=a$. We desire
\[
-as\tilde \beta(\xi_i) + as \lceil \tilde \beta(\xi_i) \rceil  \geq 0
\]
for all $s \geq 0$, but this is immediate from the definition of the ceiling function. Hence $\nu(t)$ satisfies the hypotheses of Proposition \ref{prop:extend-presentation}.
By Proposition \ref{prop:extend-presentation} we have an extended GIT presentation $(X \times \AA^1, G \times \Gm, \vartheta)$ where $\vartheta(g, \gamma) = \theta(g)\gamma^N$ for some (or any) sufficiently large $N$. In \eqref{eq:bound N}, we will impose an additional lower bound on $N$ (besides the one arising from Proposition \ref{prop:extend-presentation}).\\

\noindent
\emph{Isomorphism of Novikov rings.} 
Let ${\xi}_1', \ldots, {\xi}'_{n+1}$ be the weights of $X' := X \times \AA^1$ with respect to $T':= T\times \Gm$, and note that ${\xi}'_{n+1}(t, s) = s$ for $(t, s) \in T \times \Gm$. Let ${\tilde \beta}' \in \Hom(\chi(T\times \Gm), \QQ)$ be the direct sum of $\tilde \beta$ and the morphism sending the identity character of $\Gm$ to 1. \note{Although the class $\tilde \beta$ may not have been $I$-effective, the class $\un{\tilde \beta} = \tilde \beta'$ is $I$-effective for the extended presentation. One reason is that we compute its coefficient in the next paragraph and we don't get zero (as long as the sector corresponding to $g$ is nonempty). Another reason is that having a nonempty fixed locus implies there is a set of $\xi_i$ spanning $\theta$ such that $\tilde \beta(\xi_i) \in \ZZ$. Hence $\lceil \tilde \beta(\xi_i) \rceil = \tilde \beta(\xi_i)$ for these weights, and $\un{\tilde \beta}(\un{\xi_i}) = \tilde \beta(\xi_i) - \lceil \tilde \beta(\xi_i) \rceil = 0 \in \ZZ_{\geq 0}$ for the same set of weights. But this set of $\un{\xi_i}$, together with $\un{\xi_{n+1}},$ spans $\vartheta$ by construction}
Let $\beta = r^{X, T}_{X, G}(\tilde \beta)$, let $G':= G \times \Gm,$ and recall the identification
\[
\Hom(\Pic^{G'}(X'), \QQ) = \Hom(\Pic^G(X), \QQ) \oplus \QQ
\]
from \eqref{eq:char-iso}. We define
\begin{align*}
F: \Ieff(X, G, \theta) \times \ZZ_{\geq 0} &\to \Hom(\Pic^G(X), \QQ) \oplus \QQ\\
(\delta, d) &\mapsto (\delta+d\beta, d).
\end{align*}
In a moment, we will show that the image of $F$ is precisely $\Ieff(X', G', \theta').$ Granting this, to show the isomorphism \eqref{eq:nov iso}, we also need to compare two valuations on $\QQ[\Ieff(X, G, \theta)][t]$. These are the valuations\note{couple points: the integer $e$ doesn't change when you extend because the extended bit of $\theta$ is integral. also, there is a claim that $\Lambda \llbracket t \rrbracket$ arises from the first valuation. Write down the double limit that apriori defines $\Lambda [[t ]]$. Limits commute. You can compute the double limit just on the diagonal ideals $\{I_n[t] + t^n\Lambda[t]\}$, and you can check that these diagonal ideals are correctly interlaced with the ideals $\{I_n + tI_{n-1} + t^2I_{n-2} + \ldots + t^n\Lambda[t]\}$, which are precisely the ideals defining the first valuation. The second valuation is the valuation on the extended presentation, pulled back via $F$.}
\[
\nu(q^{\delta}t^d) = e\delta(\theta) + d \quad \quad \quad \quad \nu'(q^\delta t^d) = e\big( (\delta + d\beta)(\theta) + dN \big)
\]
where $e$ is the smallest integer satisfying $e\delta(\theta) \in \ZZ$ for all $I^{X, G, \theta}$-effective $\delta$ (see Section \ref{sec:jfuncs}).
To show that $\nu$ and $\nu'$ produce isomorphic completions it is enough to show that for each $n \geq 0$ there exists $m \geq 0$ such that $\nu(q^\delta t^d) \geq m$ implies $\nu'(q^\delta t^d) \geq n$, and similarly with the roles of $\nu$ and $\nu'$ reversed. One can check that this holds as long as
\begin{equation}\label{eq:bound N}
e\beta(\theta) + eN -1 \geq 0
\end{equation}
which we can always ensure by choosing $N$ large enough. Indeed, granting this, we find that 
\[\nu(q^\delta t^d) \geq m \quad \quad \text{implies } \quad \quad \nu'(q^\delta t^d) \geq m.\]
Conversely, if $n \geq 0$ is given, the assumption
\[
\nu'(q^\delta t^d) \geq (e\beta(\theta) + eN) n
\]
implies
\[
(e\beta(\theta) + eN)^{-1} e\delta(\theta) + d \geq n.
\]
Since $\delta(\theta) \geq 0$ for any $I^{X, G, \theta}$-effective class $\delta$ (\cite[Lem~3.2.1]{stable_qmaps}), this together with \eqref{eq:bound N} implies $\nu(q^\delta t^d) \geq n$ as required.

To show that $F$ factors through $\Ieff(X', G', \theta')$, since saturation is a left adjoint, it is enough to check the image under $F$ of pairs $(\delta, d)$ where $\delta$  is $I^{X, T, \theta}$-effective and $d \in \ZZ_{\geq 0}.$ By Lemma \ref{lem:i-effective} there is $\tilde \delta \in \Hom(\chi(T), \QQ)$ and an anticone $\cA$ such that $r^{X, T}_{X, G}(\tilde \delta) = \delta$ and $\tilde \delta(\xi_i) \in \ZZ_{\geq 0}$ for all $i \in \cA$. Then
\begin{equation}\label{eq:defF}
F(\delta, d) = (\delta, 0) + d(\beta, 1)
\end{equation}
is in the (saturation of the) monoid generated by $I^{X', T', \theta'}$-effective classes. Indeed, $(\delta, 0)$ is $I^{X', T', \theta'}$-effective by Lemma \ref{lem:i-effective} and Remark \ref{rmk:extended-anticones} using the lift $(\tilde \delta, 0)$ and the anticone $\cA \cup\{n+1\}$ (here $n$ is the dimension of $X$). On the other hand, by Lemma \ref{lem:compute-stab} our assumption that $\In{X\sslash G}_{(g)}$ is nonempty implies there is a ($G$-effective) anticone $\cB$ such that $\tilde \beta(\xi_i) \in \ZZ$ for $i \in \cB$. It follows that 
\[
\tilde \beta'(\xi_i') = \tilde \beta(\xi_i) - \lceil \tilde \beta(\xi_i)\rceil = 0 
\]
for $i \in \cB$, so $(\beta, 1)$ is $I^{X', T', \theta'}$-effective using the lift $(\tilde \beta, 1)$ and the anticone $\cB \cup \{n+1\}.$ This shows that $F$ factors through $\Ieff(X', G', \theta').$

Conversely, since $F$ is injective, to show that the image of $F$ is $\Ieff(X', G', \theta')$ it is enough to show that every $I^{X', T', \theta'}$-effective class is in the image of $F$.\note{From here, here is how to get surjectivity in general. Let $(\delta, d)$ be in $\Ieff(X', G', \theta')$, so $(\delta, d)$ is an integer linear combination of $I^{T'}$-effective classes and $(D\delta, Dd)$ is a positive integer linear combination of $I^{T'}$-effective classes for some $D \geq 0$. That is we may write
\[
(\delta, d) = \sum a_i(\delta_i, d_i) \quad \quad \quad \quad (D\delta, Dd) = \sum p_j(\epsilon_j, e_j)
\]
for some $a_i \in \ZZ$, $p_j \in \ZZ_{\geq 0}$, and $(\delta_i, d_i), (\epsilon_j, e_j)$ $I^{T'}$-effective. Write $(\delta_i, d_i) = F(x_i, d_i)$ and $(\epsilon_j, e_j) = F(y_j, e_j)$ for some $x_i$ and $y_j$ in $\Ieff(X, G, \theta)$ (note these may not be $I^T$-effective, but they are in the groupification of this set and some positive multiple is in this set). Then $(\delta, d) = F(\sum a_i (x_i, d_i))$ and $(D\delta, Dd) = F(\sum p_j (y_j, e_j))$, so injectivity of $F$ implies $D\sum a_i (x_i, d_i) = \sum p_j (y_j, e_j)$ and hence $D\sum \a_i x_i = \sum p_j y_j$. (Also, at this point we just need to show that $\sum a_i(x_i, d_i)$ is an element of $\Ieff(X, G, \theta) \times \ZZ_{\geq 0}$. We know $\sum a_i d_i = d \in \ZZ_{\geq 0}$ so we just have to check about $\sum a_i x_i$.) So now $\sum a_i x_i$ is an element of the groupification of $I^T$-effective classes with the property that $D \sum a_i x_i = \sum p_j y_j$, so if we take a further positive multiple $E$ such that each $Ey_j$ is $I^T$-effective we have that $ED\sum a_i x_i$ is $I^T$-effective. This shows that $\sum a_i x_i$ is in $\Ieff(X, G, \theta)$.
}
So let $(\delta, d) \in \Hom(\Pic^G(X), \QQ) \oplus \QQ$ correspond to an $I^{X', T', \theta'}$-effective class. According to Lemma \ref{lem:i-effective} and Remark \ref{rmk:extended-anticones} we have that $d \in \ZZ_{\geq 0}$ and there exists $\tilde \delta \in \Hom(\chi(T), \QQ)$ and an anticone $\cA$ such that $r^{X, T}_{X, G}(\tilde \delta) = \delta$ and
\begin{equation}\label{eq:main2}
\tilde \delta(\xi_i) + \lceil \tilde \beta(\xi_i) \rceil d \in \ZZ_{\geq 0}
\end{equation}
for each $i \in \cA$. We claim that $\delta -d\beta$ is in $\Ieff(X, G, \theta)$, so that $(\delta - d\beta, d)$ is an element of the left hand side of \eqref{eq:defF} mapping to $(\delta, d)$. Indeed, from \eqref{eq:main2} we have $\tilde \delta(\xi_i) \in \ZZ$ for each $i \in \cA$ and from the previous paragraph we have $\tilde \beta(\xi_i) \in \ZZ$ for each $i \in \cB$, so by Lemma \ref{lem:groupification} we have that $\delta - d\beta$ is in the groupification of the monoid generated by $I^{X, T, \theta}$-effective classes. Moreover we compute
\[
0 \leq \tilde \delta(\xi_i) -c\lceil \tilde \beta(\xi_i) \rceil  \leq \tilde \delta(\xi_i) - d\tilde \beta(\xi_i) = (\tilde \delta - d \tilde \beta)(\xi_i)
\]
where the first inequality is \eqref{eq:main2}. Hence for sufficiently large $n \in \ZZ_{\geq 0}$ we have $n(\tilde \delta - d \tilde \beta)(\xi_i) \in \ZZ_{\geq 0}$ for all $i \in \cA$. It follows that $\delta  - d\beta$ is in $\Ieff(X, G, \theta)$ as claimed.\\

\noindent
\emph{Computation of the mirror map.}
The subgroup $\nu$ was chosen so that ${\tilde \beta}'({\xi}_\ell') \in (-1, 0]$ for $\ell\leq n$ and ${\tilde \beta'}({\xi}'_{n+1})=1.$ Hence $C({\tilde \beta}', {\xi'}_\ell) = 1$ for $\ell \leq n$ and $C({\tilde \beta}', {\xi}_\ell') = z^{-1}$ (after restriction to $X\sslash G$). Moreover, Weyl-invariance of $\tilde \beta$ tells us that for any root $\rho_i$ of $G$ and $w \in W$, we have $\tilde \beta(\rho_i) = \tilde \beta(w \rho_i)$. Taking $w$ to be the reflection sending $\rho_i$ to $-\rho_i$ we see that $\tilde \beta(\rho_i)=0$, so if $ \rho_i'$ is the root of $G'$ corresponding to $\rho_i$, then $C({\tilde \beta}', \rho_i')=1$.
From Theorem \ref{thm:Ifunc-formula}, one sees that the $q^{\tilde \beta'}$-coefficient of $\varphi^*I^{X', G', \vartheta}$ is $z^{-1}\one_{g}$. Since $\tilde \beta$ was Weyl-invariant, it is the unique class mapping to $\beta$ (Lemma \ref{lem:inj r}), and this is also the coefficient of $t = q^\beta$. 

On the other hand, if $(\delta, 0) \in \Ieff(X', G', \theta')$ corresponds to an actual $I^{X', G', \theta'}$-effective class, then one can use Lemma \ref{lem:i-effective} to show that $\delta$ is $I^{X, G, \theta}$-effective. In this case Theorem \ref{thm:Ifunc-formula} shows that the coefficient of $q^{(\delta, 0)}$ in $I^{X', G', \theta'}$ is equal to the coefficient of $q^\delta$ in $I^{X, G, \theta}$.

\end{proof}

%\Rachel{the following lemma has not changed.}
\begin{lemma}\label{lem:weyl-weight}
Let $X'$ be an irreducible $G$-representation, $T \subset G$ a maximal torus, and $\tau: \Gm \to T$ a Weyl-invariant homomorphism. Then there is an integer $b \in \ZZ$ such that for every $x \in X$ we have $\tau(t) \cdot x = t^bx$.
\end{lemma}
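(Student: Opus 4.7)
The plan is to show that a Weyl-invariant cocharacter of $T$ must factor through the center $Z(G)$, and then invoke Schur's lemma. (I will assume the statement intends $x \in X'$, since $X$ is not quantified in the lemma.)

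First, I would recall the formula for the Weyl group action on the cocharacter lattice: for any root $\alpha$ of $(G,T)$ with associated coroot $\alpha^\vee$, the reflection $s_\alpha \in W$ acts on $X_*(T)$ by $s_\alpha(\lambda) = \lambda - \langle \alpha, \lambda\rangle \alpha^\vee$. Applying this to $\tau \in X_*(T)$ and using $s_\alpha(\tau) = \tau$ gives $\langle \alpha, \tau\rangle \alpha^\vee = 0$, and since $\alpha^\vee \neq 0$ we conclude $\langle \alpha, \tau\rangle = 0$, i.e., $\alpha \circ \tau$ is the trivial character of $\Gm$. In other words, for every $t \in \Gm$, $\alpha(\tau(t)) = 1$ for every root $\alpha$.

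Next I would use that for a connected reductive group $G$ with maximal torus $T$, the center is
\[
Z(G) = \bigcap_{\alpha \text{ a root of }G} \ker(\alpha) \;\subset\; T,
\]
so the previous step shows $\tau(\Gm) \subset Z(G)$. Now by Schur's lemma, since $X'$ is an irreducible $G$-representation and $\tau(t) \in Z(G)$ acts $G$-equivariantly on $X'$, each $\tau(t)$ acts on $X'$ by a scalar $\chi(t) \in \CC^*$. The map $\chi : \Gm \to \Gm$ is an algebraic group homomorphism, hence $\chi(t) = t^b$ for a unique $b \in \ZZ$, yielding $\tau(t)\cdot x = t^b x$ for every $x \in X'$.

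The argument is largely formal, so there is no serious obstacle; the only mild subtlety is the identification $Z(G) = \bigcap_\alpha \ker(\alpha)$, which requires $G$ connected reductive (and which one can alternatively phrase at the Lie algebra level as $\mathfrak{t}^W = \mathfrak{z}(\mathfrak{g}) \cap \mathfrak{t}$ and then exponentiate, using that the image of $\tau$ is connected so it is determined by its Lie algebra).
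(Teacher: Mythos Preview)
Your proof is correct and follows essentially the same route as the paper: both arguments first show that a Weyl-invariant cocharacter lands in the center of $G$, and then use irreducibility (via Schur's lemma, or equivalently by showing each $\tau$-weight space is a $G$-subrepresentation) to conclude. The only difference is that the paper cites a reference for the fact that the identity component of $T^W$ equals the identity component of $Z(G)$, whereas you prove this directly from the reflection formula $s_\alpha(\tau)=\tau-\langle\alpha,\tau\rangle\alpha^\vee$ and the identification $Z(G)=\bigcap_\alpha\ker(\alpha)$; your version is thus more self-contained.
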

\begin{proof}
The natural inclusion of the center of $G$ to the Weyl-invariant part of $T$ is an isomorphism on identity components \cite{marty}. Hence every $W$-invariant 1-parameter subgroup is central. Suppose the subspace of $X$ of weight $b$ with respect to $\tau$ is nonempty. We claim this weight space is in fact a $G$-subrepresentation of $X$: for $x\in X$ of weight $b$, we have
\[
\tau(t) \cdot gx = g\tau(t) x = gt^b x = t^b(gx)
\]
using centrality of $\tau$. Since $X$ is irreducible, this weight space is all of $X$.
\end{proof}

%\Rachel{the last two lemmas are new.}
\begin{lemma}\label{lem:groupification}
Let $(X, G, \theta)$ be a GIT presentation. The groupification of the monoid generated by $I^{X, T, \theta}$-effective classes in $\Hom(\chi(G), \QQ)$ is the set of integer linear combinations of elements of the form $r^{X, T}_{X, G}(\tilde \beta)$ such that $S_{\tilde \beta}$ contains an anticone.
\end{lemma}
\begin{proof}
The forward containment is clear from Lemma \ref{lem:i-effective}, so we only need to show that if $\tilde \beta \in \Hom(\chi(T), \QQ)$ has 
 the property that $S_{\tilde \beta}$ contains an anticone, then $r^{X, T}_{X, G}(\tilde \beta)$ is in the groupification of the monoid generated by $I^{X, T, \theta}$-effective classes. For such $\tilde \beta$, let $\cA$ be an anticone contained in $S_{\tilde \beta}$ such that the cone generated by $\cA$ is strongly convex.\footnote{We may arrange for $\cA$ to be strongly convex as follows. Given an anticone $\cA$, by definition $\theta$ is in the cone generated by $\cA$. Triangulate $\cA$. Since $X^s_\theta(T) = X^{ss}_\theta(T)$ the character $\theta$ is interior to one of the simplices in the triangulation. This simplex is strongly convex and its rays are the desired anticone.}\note{You can find a strongly convex $\cA$ that may not be $G$-effective as follows. Triangulate the cone spanned by the $S_{\tilde \beta}$. By assumption $\theta$ is contained in one of the simplices, but every simplex  is strongly convex because you can complete its generators to a basis and then change basis so that the simplicial cone looks like a positive quadrant, which is strongly convex.} Then there is a hyperplane in $\chi(T)_{\QQ}$ such that all the characters $\xi_i$ for $i \in \cA$ are on one side of this hyperplane, and hence there is an $\tilde \alpha \in \Hom(\chi(T), \ZZ)$ such that $\tilde \alpha(\xi_i) \in \ZZ_{\geq 0}$ for all $i \in \cA$. In particular we have that $r^{X, T}_{X, G}(\tilde \alpha)$ is $I^{X, T, \theta}$-effective by Lemma \ref{lem:i-effective}. But for sufficiently large $M$ we have $(\tilde \beta + M\tilde \alpha)(\xi_i) \in \ZZ_{\geq 0}$ for all $i \in \cA$, so 
 \[r^{X, T}_{X, G}(\tilde \beta + M\tilde \alpha) = r^{X, T}_{X, G}(\tilde \beta) + Mr^{X, T}_{X, G}(\tilde \alpha)\]
 is also $I^{X, T, \theta}$-effective. It follows that $r^{X, T}_{X, G}(\tilde \beta)$ is in the groupification of the monoid generated by $I^{X, T, \theta}$-effective classes.
\end{proof}

\begin{lemma}\label{lem:inj r}
Let $(X, G, \theta)$ be a GIT presentation. If $\tilde \beta, \tilde \alpha \in \Hom(\chi(T), \QQ)$ are two Weyl-invariant elements such that \begin{equation}\label{eq:inj r}r^{X, T}_{X, G}(\tilde \beta) = r^{X, T}_{X, G}(\tilde \alpha),\end{equation}
then $\tilde \beta = \tilde \alpha$.
\end{lemma}
\begin{proof}
The assumption \eqref{eq:inj r} implies that $\tilde \alpha(\theta) = \tilde \beta(\theta)$ for every Weyl-invariant $\theta \in \chi(T)$. Hence it suffices to show that if $\tilde \beta$ is Weyl-invariant, then it is determined by its value on Weyl-invariant characters. This is true because if $\theta \in \chi(T)$ is any character, using Weyl invariance of $\tilde \beta$ we have
\[
\tilde \beta(\theta) = \frac{1}{|W|} \sum_{w \in W} (w\cdot \tilde \beta)(\theta) = \frac{1}{|W|}   \tilde \beta\left(\sum_{w \in W} w^{-1} \cdot \theta\right)
\]
and we note that $\sum_{w \in W} w^{-1} \cdot \theta$ is Weyl-invariant.

\end{proof}

\section{Examples}\label{sec:examples}
The first two examples in this section are included as illustrations of the theorems in \cite{Webb21} and this paper. They are not new results. The remaining examples provide genuinely new computations of $J$-functions.
In the examples, we use the following notation:
\begin{itemize}
\item $\zeta_r$ is the complex number $e^{2 \pi i/r}$ and $\bmu_r$ is the group of $r^{th}$ roots of unity
\item For $a_1, \ldots, a_n \in \CC$, we write $\diag(a_1, \ldots, a_n)$ for the $n\times n$ matrix with the $a_i$ on the diagonal
\item $\one$ is the fundamental class of the untwisted sector of $\In{X\sslash G}$ (see Section \ref{sec:background})
\end{itemize}

In each example we will study a GIT presentation $(X, G, \theta)$ (sometimes there will also be a representation $E$ and section $s$). We will use $T$ to denote the maximal torus, which is sometimes only isomorphic to a subgroup of $G$. If the presentation is an extended one we will use the notation $(\bX, \bG, \vartheta)$ and $\bT$ for the maximal torus. In each case one can use the numerical criterion of \cite[Prop~2.5]{king} to show that $(X, G, \theta)$ is a GIT presentation. 

The torus $T$ will always be identified with a product of groups of diagonal matrices, and we denote the standard basis of projection characters by $e_1, \ldots, e_{\mathrm{rk}(T)}$, and for $\tilde \beta \in \Hom(\chi(T), \QQ)$ we will write $\tilde \beta_i:= \tilde \beta(e_i)$ and $H_i := c_1(\cL_{e_i}) \in H^\bullet(\In{X\sslash_G T}; \QQ)$. In this basis we have $g_{\tilde \beta} =  (e^{2 \pi i \tilde \beta_1}, \ldots, e^{2 \pi i \tilde \beta_{\mathrm{rk}(T)}})$. We will likewise choose a basis for $\chi(G)$ in each example and for $\beta \in \Hom(\chi(G), \QQ)$ we will write $\beta_1, \ldots, \beta_{\mathrm{rk}(\chi(G))}$ for the corresponding coordinates of $\beta$. We use $r$ to denote the morphism
\begin{equation}\label{eq:r}
r:= r^{X, T}_{X, G} \Hom(\chi(T), \QQ) \to \Hom(\chi(G), \QQ).
\end{equation}
Via our chosen bases for $\chi(T)$ and $\chi(G)$, we will identify $r$ with a map of $\QQ$-vector spaces.

In each example, we will also choose an ordered weight basis for $X$ with respect to $T$. An anticone is then a set $\{i_1, \ldots, i_a\}$ of distinct integers corresponding to indices of elements in this basis, and $C_{\{i_1, \ldots, i_a\}}$ is the set defined in \eqref{eq:CA}. The set $\KK^{\geq 0}(X, G, \theta)$ is then equal to $r(\bigcup C_{\sA})$ where the union is over all $G$-effective anticones $\sA$. Finally, we will write $I(q, z)$ for $I^{X, G, \theta}(q, z)$. We know that
\[
\varphi^*I(q, z) = \sum_{\beta \in \KK^{\geq 0}(X, G, \theta)} q^\beta \sum_{\tilde \beta \to \beta} I_{\tilde \beta}(z),
\]
where the second sum is over all $\tilde \beta \in \bigcup C_{\sA}$ mapping to $\beta$, and we will give a formula for $I_{\tilde \beta}(z)$.

% Finally, we will use the following strategy to extend GIT presentations so that the mirror map includes more information about a chosen sector of $\In{X\sslash G}$. 

% \todo{(1)is everything below true, and (2) is there anything else \textit{easy to say} about how we choose $\nu$, and is it worth saying}
% \Nawaz{HAve we defined the Weyl group action or remarked its existence? We talk about it a lot here and I don't remember it being mentioned before; I also have bad memory}
% \Rachel{I formally introduced it}
%\Nawaz{Italicized is weird. I'd suggest changing environment type.}
%\Rachel{go ahead!} \Nawaz{How do we feel about remakrs being italized vs not italicized? }

\note{
\begin{remark}\label{rmk:only-weyl}
Remark \ref{rmk:bad-extension} suggests that if one attempts to apply Strategy \ref{strat:only} to an element $g \in T$ that is not Weyl-invariant, the subgroup $\nu$ will usually not commute with the image of $G$ in $GL(X)$. 
\end{remark}
}

\subsection{Trivial extension}\label{sec:trivial-extension}
% Let $(X, G, \theta, E, s)$ be a CI-GIT presentation. We construct an extended presentation so that the $I$-function has an additional parameter tracking the unit in $H^\bullet_{CR}(X)$.

% Choose a sequence of polynomials $s_1(x), \ldots, s_r(x)$ in $\Gamma(X, \OO(X))$ representing $s$. Let $\nu: \Gm \to GL(X)$ be the trivial homorphism. If we choose $\nu': \Gm \to G$ to also be trivial and $N=1$ then the assumptions of Proposition \ref{prop:extend-presentation} are satisfied. In Proposition \ref{prop:extend-cigit} we may take $r=0$, so $\tilde s_i(x, y) = s_i(x)$ and the polynomials $\tilde s_i$ form a regular sequence. Hence $(X\oplus \CC, G \times \Gm, \vartheta_1, E(0), \tilde s)$ is also a CI-GIT presentation for $Z(s)\sslash_\theta G$. 

Let $(X, G, \theta)$ be a GIT presentation. We construct an extended presentation so that the $I$-function has an additional parameter tracking the unit $\one \in H^\bullet_{CR}(X\sslash_\theta G; \QQ)$.

Let $\nu: \Gm \to GL(X)$ be the trivial homorphism. By Theorem \ref{thm:only} (with $\tilde \beta$ equal to the zero homomorphism), the extension $(X\times \AA^1, G \times \Gm, \vartheta_1)$ is also a GIT presentation for $X\sslash_\theta G$.
Let $I(q, z)$ denote the $I$-function for the original presentation and let $\hat I(\hat q, z)$ denote the $I$-function for the extended presentation. 
%Note that $\hat I(\hat q, z)$ has coefficients in $H^\bullet_{\CR}((X\times \AA^1) \sslash_\vartheta (G \times \Gm); \QQ)$. 
By Proposition \ref{prop:extend-presentation} there is an isomorphism $f: X\sslash G \to (X\times \AA^1) \sslash (G \times \Gm)$. Moreover, we have 
\[
\Hom(\chi(G\times \Gm), \QQ) = \Hom(\chi(G), \QQ) \oplus \Hom(\chi(\Gm), \QQ)
\]
so we write $\hat \beta \in \Hom(\chi(G\times \Gm), \QQ)$ as $\beta + d\alpha$ where $\beta \in \Hom(\chi(G), \QQ)$, $d \in \QQ$, and $\alpha: \chi(\Gm) \to \QQ$ sends the identity character to 1.
We will show
\begin{equation}\label{eq:trivial-extension}
f^* \hat I(\hat q, z) = e^{t_0/z} I(q, z)
\end{equation}
after setting $q^\beta = \hat q^\beta$ and $t_0 = \hat q^\alpha$ on the right hand side.

By Remark \ref{rmk:extended-anticones}, the anticones for the extended presentation are $\sA\cup\{n+1\}$ where $\sA$ is a $G$-effective anticone for $(X, G, \theta)$. One checks that
\[
C_{\sA\cup\{n+1\}} = C_{\sA} \times \ZZ_{\geq 0}
\]
where $C_{\sA}$ was defined in \eqref{eq:CA}.
It follows that $\beta + d\alpha$ is $I$-effective if and only if $\beta$ is $I$-effective and $d \in \ZZ_{\geq 0}$. Using Theorem \ref{thm:Ifunc-formula} one computes
\[
\varphi^*f^*\hat I(\hat q, z) = f^*\varphi^*\hat I(\hat q, z) = \left( f^*\sum_{d\geq 0}\hat q^{d\alpha} \frac{1}{\prod_{k=1}^d (c_1(\L_{\xi_{n+1}})+kz)} \right)  \varphi^* I(\hat q, z) 
\]\note{on the rhs, $\hat q$ is supposed to be there---the variable is $\hat q^\beta$ instead of $q^\beta$.}
%\Nawaz{There is some abuse of notation with $\varphi$ and $f$ corresponding to related maps on different quotients right?}\Rachel{right.} \Nawaz{We should have a sentence mentioning this. the first equality is a bit jarring to read}
where $\xi_{n+1}$ is the character of $G \times \Gm$ given by projection to $\Gm$, where we also use $f$ to denote the isomorphism $X\sslash T \to (X\times \AA^1)\sslash (T \times \Gm)$.
The character $\xi_{n+1}$ pulls back to the trivial character of $G$ under the inclusion $G \to G \times \Gm$ used to define $f$, so $f^*\L_{\xi_{\n+1}}$ is the trivial bundle and $f^*c_1(\L_{\xi_{n+1}})=0$. Setting $t_0=q^\alpha$ we obtain the desired result.

\begin{remark}
The $I$-function in \eqref{eq:trivial-extension} can also be computed with \cite[~Theorem 4.2]{CCK15}.
\end{remark}

\subsection{Classifying stack of $S_3$}\label{sec:bs3}
The following GIT presentation for $BS_3$ was told to us by Yang Zhou.
\begin{itemize}
\item $X := \Sym^3(\CC^2)$ with ordered basis $(x^3, x^2y, xy^2, y^3)$. We write $p \in X$ as a polynomial $p(x, y)$ and view it as a function on $\CC^2$.
\item $G := GL(2)/\bmu_3$, where $GL(2)$ acts on $X$ by $g \cdot p = p \circ g^{-1}$ and $\bmu_3$ is the subgroup $\diag(\zeta_3, \zeta_3) \subset GL(2)$
\item $\theta \in \chi(G)$ is induced by the function $GL(2) \to \Gm$ sending $g$ to $\det(g)^{-3}$
\end{itemize}
% To define it, let $X = \Sym^3(\CC^2) \simeq \CC^4$, where the identification with $\CC^4$ is given by viewing $\Sym^3(\CC^2)$ as the vector space of homogeneous polynomials of degree 3 in $x$ and $y$, with an ordered basis given by $(x^3, x^2y, xy^2, y^3)$. Under this identification we write an element $p \in X$ as a polynomial $p(x,y)$. 

% Let $\wt{G}=GL(2)$ act on $X$ by $g \cdot p = p(g^{-1}(x,y))$, where if $g^{-1} = (a_{ij})_{i, j=1, 2}$ then $g^{-1}(x,y)$ is defined to be $(a_{11}x+a_{12}y, a_{21}x+a_{22}y).$ 
% %  \begin{remark}
% % Note that if $p$ has linear factors as above, the linear factors of $g\cdot p$ are given by $(a_i,b_i)g^{-1}$.
% % \end{remark}
% Observe that $\mu_3 = \diag(\zeta_3, \zeta_3)$ is the kernel of the $\wt{G}$-action, and define $G := \wt{G}/\mu_3$.
% Let $\theta: \wt{G} \to \Gm$ be given by $\theta(g) = \det(g)^{-3}.$ This descends to a character of $G$, which we also denote $\theta.$

We choose $T' \subset G$ to be the image of the diagonal matrices in $GL(2)$. There is an isomorphism $T' \to (\CC^*)^2 =: T$ induced by the morphism from diagonal matrices to $T$ given by \begin{equation}\label{eq:bs3-1}
\diag(\lambda, \mu) \mapsto (\lambda^{-1} \mu^{-2}, \lambda^{-2}\mu^{-1}).
\end{equation}
The weights of $X$ under the action of $T$ (with the ordered basis given above) are given by the columns of the matrix
\[
\left(\begin{array}{cccc}
-1 & 0 & 1 & 2\\
2 & 1 & 0 & -1
\end{array}\right).
\]
The character $\theta$ restricts to the character of $T$ given by $(s, t) \mapsto st$.
One can check using the numerical criterion of \cite[Prop~2.5]{king} that $(X, G, \theta)$ is a GIT presentation for $BS_3$, with $X^{ss}(G)$ equal to the locus in $X$ of polynomials with distinct linear factors, and that each element of $\Stab_G(X^{ss}_{\theta}(G))$ is conjugate to an element of $T'$ that maps to the set $\{ (1,1),(1,\zeta_2), (\zeta_2, 1), (\zeta_3, \zeta_3^2), (\zeta_3^2, \zeta_3) \} \subset T$.

\subsubsection{Geometry of inertia orbifolds}
In this section we make the projection $\varphi: \In{X\sslash_G T} \to \In{X\sslash G}$ completely explicit.
%(we are using the notation in \eqref{eq:notation}). 
%\Nawaz{Meant to $\In{X\sslash G}$? Changed it but change back if not}
%\Rachel{What was it before?} \Nawaz{Was just $X \sslash G$}
For $a_1, a_2 \in \QQ$, let $X^{a_1, a_2}$ be the subspace of $X$ fixed by $(e^{2 \pi ia_1}, e^{2\pi i a_2}) \in T$. 
Using \cite[Sec 2.1]{Webb21}, 
we compute that each sector of $\In{X\sslash G}$ is isomorphic to a sector of $\In{X\sslash_G T}$, and fixing an isomorphism the map $\varphi$ can be written explicitly as follows:
\begin{equation}\label{eq:bs3-inertia}
\begin{tikzcd}
X\sslash_G T \arrow[d] &[-1em] X^{0, \frac{1}{2}}\sslash_G T \arrow[dr, "id"'] &[-5em]&[-5em] X^{\frac{1}{2}, 0}\sslash_G T \arrow[dl, "f"] &[-1em] X^{\frac{1}{3}, \frac{2}{3}}\sslash_G T\arrow[dr, "id"'] &[-5em]&[-5em] X^{\frac{2}{3}, \frac{1}{3}}\sslash_G T\arrow[dl, "g"]\\
 X\sslash G && X^{0, \frac{1}{2}}\sslash_G T &&& X^{\frac{1}{3}, \frac{2}{3}}\sslash_G T\\
\end{tikzcd}
\end{equation}
% \begin{equation}\label{eq:bs3-inertia}
% \begin{tikzcd}
% \In{X\sslash_G T'} \arrow[d, "\varphi"] & [-1em]X\sslash_G T' \arrow[d] &[-2.5em] X^{0, \frac{1}{2}}\sslash_G T' \arrow[dr, "id"'] &[-5em]&[-5em] X^{\frac{1}{2}, 0}\sslash_G T' \arrow[dl, "f"] &[-2.5em] X^{\frac{1}{3}, \frac{2}{3}}\sslash_G T'\arrow[dr, "id"'] &[-5em]&[-5em] X^{\frac{2}{3}, \frac{1}{3}}\sslash_G T'\arrow[dl, "g"]\\
% \In{X\sslash G} & X\sslash G && X^{0, \frac{1}{2}}\sslash_G T' &&& X^{\frac{1}{3}, \frac{2}{3}}\sslash_G T'\\
% \end{tikzcd}
% \end{equation}
Here, the top row lists sectors of $\In{X\sslash_G T}$, the bottom row lists sectors of $\In{X\sslash G}$ (after an isomorphism), and the vertical maps are $\varphi$ (under the chosen isomorphism).
%is \Nawaz{the map on sectors induced by} $\varphi$. 
%\Nawaz{I find it weird that the sectors of $\In{X\sslash G}$ are written like the ones from $\In{X\sslash_G T'}$ since a priori it's not obvious they are the same. Don't have to prove, but maybe make mention of it? Or just write the sectors of $\In{X\sslash G}$ as $B\ZZ_a$, and make mention that it is clearly the twisted sectors of $\In{BS_3}$. }\Rachel{better?}
If we write
\[
X^{0, \frac{1}{2}} = \{ax^3 + cxy^2\},\quad X^{\frac{1}{2}, 0} = \{bx^2y + dy^3\},\quad  X^{\frac{1}{3}, \frac{2}{3}} = X^{\frac{1}{3}, \frac{2}{3}} = \{ax^3 + dy^3\},
\]
then the maps $f$ and $g$ are given by $f(bx^2y + dy^3) = dx^3 + bxy^2$ and $g(ax^3 + dy^3) = dx^3 + ay^3$. Observe that all stacks in the last (resp. penultimate) column of \eqref{eq:bs3-inertia} are isomorphic to $B\ZZ_3$ (resp. $B\ZZ_2$). 
A basis of $H^\bullet(\In{X\sslash G}; \QQ)$ is $\{\one, \one_{B\bmu_2}, \one_{B\bmu_3}\}$, where $\one_{B\bmu_2}$ (resp. $\one_{B\bmu_3}$) is the fundamental class of the middle sector (resp. rightmost sector) in \eqref{eq:bs3-inertia}.
% \begin{itemize}
%   \item $\one$ is the fundamental class of the untwisted sector
%   \item $\one_{B\bmu_2}$ is the fundamental class of the middle sector in \eqref{eq:bs3-inertia}
%   \item $\one_{B\bmu_3}$ is the fundamental class of the rightmost sector in \eqref{eq:bs3-inertia}.
%   \end{itemize}
  %\Nawaz{How do you like itemize? Can change back if needed}

\subsubsection{$I$-function}\label{sec:bs3-ieffective}
The minimal anticones for $X$ with respect to $T$ are $\{1, 4\}$, $\{1, 3\}$, $\{2,4\}$, and $\{2, 3\}$, and each is $G$-effective. 
%\Nawaz{Are $\{1,3\}$ and $\{2, 4\}$ effective?}
\note{Each of these anticones is $G$-effective: for example, $V_{\{2, 3\}}$ is equal to polynomials of the form $a_2x^2y + a_3xy^2$ with $a_2a_3\neq 0$. This anticone is $G$-effective because $x^2y + xy^2$ has distinct roots.}
We have
\begin{align*}
C_{\{2, 3\}} &= \left\{ \begin{array}{c}
  (\tilde \beta_1, \tilde \beta_2) \in \ZZ^2 \\
  \tilde \beta_1 \geq 0,\;\;  \tilde \beta_2 \geq 0 \\
\end{array} \right\} & 
C_{\{2, 4\}} &= \left\{ \begin{array}{c}
  (\tilde \beta_1, \tilde \beta_2) \in \ZZ \times (\ZZ + \frac{1}{2}) \\
  \tilde \beta_1 \geq 0,\;\; -\tilde \beta_1 + 2\tilde \beta_2 \geq 0 \\
\end{array} \right\}\\
C_{\{1, 4\}} &= \left\{ \begin{array}{c}
  (\tilde \beta_1, \tilde \beta_2) \in (\ZZ+\frac{i}{3})\times (\ZZ + \frac{3-i}{3}) \\
  2\tilde \beta_1 - \tilde \beta_2 \geq 0,\;\; -\tilde \beta_1 + 2\tilde \beta_2 \geq 0 \\
\end{array} \right\} & C_{\{1, 3\}} &= \left\{ \begin{array}{c}
  (\tilde \beta_1, \tilde \beta_2) \in (\ZZ + \frac{1}{2})\times \ZZ \\
  2\tilde \beta_1 - \tilde \beta_2 \geq 0,\;\; \tilde \beta_2 \geq 0 \\
\end{array} \right\} &
\end{align*}
We choose $\theta$ to be our basis for $\chi(G)$, so the map $r$ in \eqref{eq:r} is identified with the linear map sending $(\tilde \beta_1, \tilde \beta_2)$ to $\tilde \beta_1+\tilde \beta_2$, and we have
\[
\KK^{\geq 0}(X, G, \theta) = (1/2)\ZZ_{\geq 0} \subset \QQ.
\]

% where the sets $C_{\sA}$ were defined in \eqref{eq:CA}. By Lemma \ref{lem:i-effective}, the $I$-effective classes are the image of $\bigcup C_{\sA}$ under
% \[
% r^{X, T'}_{X, G}: \Hom(\chi(T'), \QQ) \to \Hom(\chi(G), \QQ)
% \]
% If we choose $\theta$ a basis for $\chi(G)$,\note{characters of $G$ induce morphisms (characters) $GL(2) \to \Gm$. But the only ones of these that induce homs out of $G$ are cubes of the determinant character on $GL(2)$. This is what $\theta$ is---a cube of the inverse of the determinant.} then 
% we have an isomorphism
% \begin{equation}\label{eq:this-iso0}
% \Hom(\chi(G), \QQ) \xrightarrow{\sim} \QQ \quad \quad \quad \beta \mapsto \beta(\theta)
% \end{equation}
% and
% $r^{X, T'}_{X, G}$ is identified with the linear map $\QQ^2 \to \QQ$ sending $(\tilde \beta_1, \tilde \beta_2)$ to $\tilde \beta_1+\tilde \beta_2$. Hence, the $I$-effective classes are equal to the subset of $\Hom(\chi(G), \QQ) \simeq \QQ$ equal to $((1/2)\ZZ)_{\geq 0 }.$ 
%\Nawaz{Doublecheck image}

Using \eqref{eq:bs3-1} we see that the roots of $G$ are equal to $\pm(e_1-e_2)$.
\note{Since $G$ is the quotient of $GL(2)$ by a finite group, the roots of $G$ with respect to $T'$ are identified with the roots of $GL(2)$ with respect to diagonal matrices, under the map \eqref{eq:bs3-1}. Hence the roots of $G$ are equal to $\pm(e_1-e_2)$.}
We compute
% Since $G$ is the quotient of $GL(2)$ by a finite group, the roots of $G$ with respect to $T'$ are identified with the roots of $GL(2)$ with respect to diagonal matrices, under the map \eqref{eq:bs3-1}. Hence the roots of $G$ are equal to $\pm(e_1-e_2)$, where $e_i: T' \to \Gm$ is the $i^{th}$ projection.\note{the point is that $\lambda^{-1}\mu^{-2}(\lambda^{-2}\mu^{-1})^{-1} = \lambda \mu^{-1}$ so a diagram commutes.} Let $H_i = c_1(\cL_{e_i}) \in H^\bullet(\In{X\sslash_G T'}; \QQ)$, and let $I(q, z)$ denote the $I$-function of $(X, G, \theta)$. By Theorem \ref{thm:Ifunc-formula}, we have that 
% %\Rachel{in this $I$-function and later, the $\beta=0$ term is not written separately. I think this still yields the correct formula. Nawaz, do you agree?} \Nawaz{I agree it isn't separate, but I'm slightly confused about the potential issue?}
% \[\varphi^*I(q, z) = \sum_{\beta \in (\frac{1}{2}\ZZ)_{\geq 0}} q^\beta \sum_{\tilde \beta \to \beta} I_{\tilde \beta}(z),\]
% where the second sum is over $\tilde \beta \in \bigcup C_{\sA}$ mapping to $\beta$, and we have moreover 
%\todo{finish filling in factors, incl fractional parts in the index bounds}\Rachel{Nawaz you did this; is there a reason you left the red comment?}
\begin{align} \label{eq:IBS3}
I_{\tilde \beta}(z) = &C(\tilde \beta, e_1-e_2)^{-1}\;C(\tilde \beta, e_2-e_1)^{-1}\\
&C(\tilde \beta, -e_1+2e_2)\;C(\tilde \beta, e_2)\;C(\tilde \beta, e_1)\;C(\tilde \beta, 2e_1-e_2)\,\one_{g_{\tilde \beta}^{-1}}\notag,
\end{align}
where the factors $C(\tilde \beta, e_i)$ always appear as denominators since $\tilde \beta(e_i) \geq 0$ for every $\tilde \beta \in \bigcup C_{\sA}$.
%\Nawaz{Should we mention $\deg(q) = 2$?}
%Nawaz{Note $\tilde\beta_1, \tilde\beta_2 \geq 0$ always}
%\Rachel{Why do we want to note this?} \Nawaz{Don't remember, might have been a note to myself. Only thing I can think of is to explain why some factors only have denominators} 
% In what follows, if $a_1, a_2 \in \QQ$, we will write $(a_1, a_2)$ for $(e^{2 \pi i a_1}, e^{2 \pi i a_2}) \in T'$ to lighten notation. We will also write $(a_1, a_2)$ for the isomorphic image of this element in $T \subset G$.
% In what follows, if $(\alpha, \beta) \in T'$, we will also write $(\alpha, \beta)$ for the isomorphic image of this element in $T \subset G$.\todo{I am not happy with this. problem is how to notate fundamental classes of the nonabelian quotient.}
In Lemma \ref{lem:bs3-asymptotics} below, we show that
\begin{equation}\label{eq:bs3-asymptotics}
I(q, z) = \one + \frac{1}{z}  q^{1/2} \one_{B\bmu_2}+ \frac{3q}{2z^2}(\one_{B\bmu_3} + \one_{}) + O(z^{-3}).
\end{equation}
% By Remark \ref{rmk:extend-pic} the map $\iota: \Hom(\Pic(X\sslash G), \QQ) \to \Hom(\chi(G), \QQ)$ is injective.

Let $\cX := X\sslash G = BS_3$. Since $H_2(\cX; \QQ)=0$, the Novikov ring $\Lambda_{\cX}$ is just $\QQ$ and the composition \eqref{eq:def-iota} is injective.
If we write $1 = \iota(Q)$ for the trivial Novikov variable of $\cX$, by \eqref{eq:mirror-thm} and \eqref{eq:trivial-extension} we have that
\begin{equation}\label{eq:bs3-jfunc}
J(t_0\one_{} + t_1\one_{B\bmu_2}, 1, z) = e^{t_0/z} I(q, z)|_{q^{1/2}=t_1}.
\end{equation}
 This restriction lets us compute the 3-point correlators of $BS_3$ with insertions equal to $\one$ and $\one_{B\bmu_2}$. (All the 3-point correlators of $BS_3$ can be obtained by other methods, e.g. \cite[Prop~3.1]{JK02}.)
Unfortunately, we cannot extend this GIT presentation to recover more invariants: the only homomorphisms $\nu: \Gm \to GL(4)$ satisfying the hypotheses of Lemma \ref{lem:stack-iso} are $\nu(t)=t^k$ for some integer $k$.
One checks that none of these extensions gives a more general restriction of the $J$-function than the one in \eqref{eq:bs3-jfunc}.
%Of these, one can check that none gives more information than the trivial subgroup $k=0$.
 %\Nawaz{Should we give brief reason why?} \Rachel{I don't have a reason besides I computed it. Do you have a intuitive reason?}
 %Hence, \eqref{eq:bs3-jfunc} is the best formula for the $J$-function that we can produce using the methods in this paper. 

\begin{lemma}\label{lem:bs3-asymptotics}
The equality \eqref{eq:bs3-asymptotics} holds.
\end{lemma}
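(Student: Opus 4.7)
The plan is to apply Theorem~\ref{thm:Ifunc-formula} to compute $\varphi^* I(q, z)$ summand-by-summand in $q^\beta$ for $\beta \leq 1$, and then separately bound the $z$-asymptotics of the contributions from $\beta > 1$. The pullback $\varphi^* \colon H^\bullet_{\CR}(X\sslash G; \QQ) \to H^\bullet(\In{X\sslash_G T}; \QQ)$ sends $\one_{B\bmu_2}$ to $\one_{(1, -1)} + \one_{(-1, 1)}$ and $\one_{B\bmu_3}$ to $\one_{(\zeta_3, \zeta_3^2)} + \one_{(\zeta_3^2, \zeta_3)}$, and it is injective because it takes each sector of $\In{X\sslash G}$ to a distinct sum of sectors of $\In{X\sslash_G T}$; thus it suffices to verify the identity after applying $\varphi^*$.

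First I will enumerate $\bigcup_{\sA} C_\sA \cap \{\tilde \beta \mid \tilde \beta_1 + \tilde \beta_2 \leq 1\}$ using the four $C_{\sA}$ described above. This produces seven contributing classes: $(0, 0)$ at $\beta = 0$; $(1/2, 0)$ and $(0, 1/2)$ at $\beta = 1/2$; and $(1, 0), (0, 1), (1/3, 2/3), (2/3, 1/3)$ at $\beta = 1$. For each of these other than $(0, 0), (1, 0), (0, 1)$, the supporting sector $X^{g_{\tilde \beta}^{-1}} \sslash_G T$ is isomorphic to $B\bmu_2$ or $B\bmu_3$, whose rational cohomology is just $\QQ$; in particular every class $c_1(\L_{\xi})$ restricts to zero, and each $C(\tilde \beta, \xi)$ in \eqref{eq:IBS3} reduces to a scalar rational function of $z$. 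A direct substitution then gives $I_{(0, 1/2)} + I_{(1/2, 0)} = z^{-1} \varphi^* \one_{B\bmu_2}$ and $I_{(1/3, 2/3)} + I_{(2/3, 1/3)} = (3/(2z^2))\, \varphi^* \one_{B\bmu_3}$.

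The subtle piece is the untwisted-sector contribution from $\tilde \beta = (1, 0)$ and $(0, 1)$. Here I will identify $H^\bullet(X\sslash_G T; \QQ)$ using the natural fibration $X\sslash_G T \to X\sslash G = BS_3$ with fiber $G/T$ (rationally homotopy equivalent to $\PP^1$) together with the fact that the Weyl-invariant part must recover $H^\bullet(BS_3; \QQ) = \QQ$; this forces the ring to be $\QQ[H_1, H_2]/(H_1 + H_2,\, H_1 H_2) \simeq \QQ[H_1]/(H_1^2)$, with $H_2 = -H_1$. Substituting into \eqref{eq:IBS3} for $\tilde \beta = (1, 0)$, the apparently singular root factor $C(\tilde \beta, e_2 - e_1)^{-1} = (H_2 - H_1)^{-1} = -(2H_1)^{-1}$ combines with the weight factor $C(\tilde \beta, -e_1 + 2e_2) = -3H_1$ to produce the scalar $3/2$, yielding the regular expression
\[
I_{(1, 0)} = \frac{3(2H_1 + z)}{2 (H_1 + z)(3H_1 + z)(3H_1 + 2z)} \one.
\]
Expanding in $z^{-1}$ using $H_1^2 = 0$ gives $I_{(1, 0)} = (3/(4z^2))\one - (21 H_1/(8 z^3))\one$. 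Since $(0, 1)$ is the Weyl-image of $(1, 0)$, the class $I_{(0, 1)}$ is obtained by $H_1 \mapsto -H_1$, and the $H_1/z^3$ terms cancel in the sum, leaving $I_{(1, 0)} + I_{(0, 1)} = (3/(2z^2))\, \varphi^* \one$.

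For the $O(z^{-3})$ bound on contributions with $\beta > 1$, I will read off the $z$-degree of each factor in \eqref{eq:IBS3} directly from the definition of $C$: each weight factor $C(\tilde \beta, \xi_j)$ has $z$-degree near $-\lceil \tilde \beta(\xi_j)\rceil$ (with small off-by-one adjustments when $\tilde \beta(\xi_j)$ is a negative integer), while the pair of opposite roots $\pm(e_1 - e_2)$ together contributes $+1$ when $\tilde \beta_1 \neq \tilde \beta_2$ and $0$ otherwise. Using $\tilde \beta(\sum_j \xi_j) = 2(\tilde \beta_1 + \tilde \beta_2) = 2\beta$, a short case analysis over the handful of $\tilde \beta$ with $\tilde \beta_1 + \tilde \beta_2 \in \{3/2, 2\}$ shows $\deg_z I_{\tilde \beta} \leq -3$, and the same $z$-degree count extends verbatim to all larger $\beta$. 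The main obstacle in the whole argument is the ring identification for $H^\bullet(X\sslash_G T; \QQ)$ together with the cancellation of the root-factor pole in $I_{(1, 0)}$; everything else is routine substitution and bookkeeping.
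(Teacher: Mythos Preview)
Your computation is correct and arrives at the same answer as the paper, but the route differs in two substantive ways, and one of your steps deserves tighter justification.

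\textbf{The $O(z^{-3})$ bound.} The paper handles this in one line using the homogeneity of the $I$-function (Section~\ref{sec:implications} together with Lemma~\ref{lem:bound-degree}): since $\deg(q^\beta)=2\beta$ and every cohomology class on $\In{BS_3}$ has degree~$0$ (the stack is $0$-dimensional and all ages vanish), the $q^\beta$-term lives purely in $z^{-2\beta}$. Your direct $z$-degree count on each factor $C(\tilde\beta,\xi)$ is essentially a re-derivation of this homogeneity by hand; it works, but the case analysis you leave implicit is exactly what homogeneity packages for free, and your phrase ``extends verbatim to all larger $\beta$'' is doing the work that Lemma~\ref{lem:bound-degree} does rigorously.

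\textbf{The untwisted-sector cancellation.} Here the two approaches genuinely diverge. The paper treats $I_{(1,0)}+I_{(0,1)}$ as a single Weyl-symmetric expression with an anti-invariant numerator divisible by $H_1-H_2$, performs the division in $\QQ[H_1,H_2]$ (citing \cite[Lem~5.4.1]{nonab1}), and only then evaluates in cohomology; the answer $3/(2z^2)$ falls out without ever needing the ring structure of $H^\bullet(X\sslash_G T;\QQ)$. You instead impose the relations $H_1+H_2=0$ and $H_1^2=0$ first and compute each $I_{(1,0)}$, $I_{(0,1)}$ separately by the formal cancellation $(-3H_1)/(-2H_1)=3/2$. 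This gives the right sum, but two points are under-justified: (i) your argument that the ring is $\QQ[H_1,H_2]/(H_1+H_2,H_1H_2)$ does not actually establish $H_1\neq 0$ (the Weyl-invariants being $1$-dimensional only tells you the degree-$2$ piece carries the sign representation, not that $H_1$ generates it); and (ii) the individual term $I_{(1,0)}$ is not a well-defined cohomology class---the root denominator $(H_2-H_1)^{-1}$ is not invertible---so your cancellation is really a computation in $\QQ(H_1)$ followed by an observation that the result is regular at $H_1=0$. That this agrees with the anti-invariant division is true (substitute $H_2=-H_1$ into the symmetric quotient), but you should say so rather than presenting $I_{(1,0)}=\tfrac{3}{4z^2}\one-\tfrac{21H_1}{8z^3}\one$ as though it were a class.

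In short: your proof is sound and your final answers match, but the paper's use of homogeneity and anti-invariant division is both shorter and avoids the ring-identification that you lean on without fully proving.
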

\begin{proof}
From Lemma \ref{lem:bound-degree} 
%\Rachel{Not quite the right reference? explain in sec 5 before lem 5.5.1 how to compute $\deg(q^\beta)$ and reference this in that lem's proof as well as here} 
we know that for any $\beta \in \KK^{\geq 0}(X, G, \theta) = (1/2)\ZZ_{\geq 0}$ and any $\tilde \beta \in \bigcup C_{\sA}$ mapping to $\beta$, 
\[
\deg(q^\beta) = 2\tilde \beta_1 + 2\tilde \beta_2 = 2 \beta \geq 1
\]
% where $\beta$ is interpreted as a rational number via the isomorphism \eqref{eq:this-iso0}
% Since $I$-effective classes $\beta$ are identified with $(1/2\ZZ)_{\geq 0}$ we see that $\deg(q^\beta) \geq 1$ for nontrivial $\beta$.
%if $\beta$ is effective. 
Since the degrees of $\one_{}, \one_{B\bmu_2}$, and $\one_{\bmu_3}$ are all zero, homogeneity implies $I^{X\sslash G} = \one + O(1/z)$
%\todo{choose font for big O and make it consistent} \Nawaz{Think people just use normal $O$ here. Changed it so you can see how it looks}
and the coefficient of $z^{-1}$ in $I(q, z)$ is a multiple of $q^{1/2}$. We recall that $I_\beta(z) = \sum_{\tilde \beta \to \beta} I_{\tilde \beta}(z)$ and sum equation \eqref{eq:IBS3} over $\tilde \beta$ mapping to $\beta = 1/2$, obtaining
\begin{align*}
\varphi^*I_{1/2}(z) &= (H_1 - H_2 + (1/2)z) (2H_1-H_2+z)^{-1}(H_1+(1/2)z)^{-1}\one_{(\zeta_2, 1)}\\
&+ (H_2 - H_1 + (1/2)z) (2H_2-H_1+z)^{-1}(H_2+(1/2)z)^{-1}\one_{(1, \zeta_2)}
\end{align*}
Now $H_i=0$ since the ambient stacks are zero dimensional. So the coefficients of $\one_{(\zeta_2, 1)}$ and $\one_{(1, \zeta_2)}$ are series in $z$; in fact they are the same series because of the geometry of the situation (see \eqref{eq:bs3-inertia}).
% since their sum is pulled back from 
% the component $X^{0, 1/2}\sslash_G T'$ of $\In{X\sslash G}$ (see \eqref{eq:bs3-inertia}). 
%\Nawaz{This sentence doesn't make sense to me? Is it needed? Or could delete and say series is $1/z$ because $H_i$ vanish}\Rachel{better? I want to point out the same-series thing as a technique that can be used in more complicated computations later.}
% $I_{\mu}(X\sslash G)_{(\zeta_2, 1)}$. 
The series is $1/z$.

Similarly, the coefficient of $z^{-2}$ is a multiple of $q$. From \eqref{eq:IBS3}, we have
\begin{align}
\varphi^*I_1(z) &= (H_2-H_1+(1/3)z)(-H_1+2H_2+z)^{-1}(H_2+(2/3)z)^{-1}(H_1+(1/3)z)^{-1}\one_{(\zeta_3^2, \zeta_3)}\label{eq:big-thing}\\
&+(H_1-H_2+(1/3)z)(H_2+(1/3)z)^{-1}(H_1+(2/3)z)^{-1}(2H_1-H_2+z)^{-1}\one_{(\zeta_3, \zeta_3^2)}\notag\\
&+\frac{H_1-H_2+z}{H_2-H_1}(-H_1+2H_2)(H_1+z)^{-1}(2H_1-H_2+z)^{-1}(2H_1-H_2+2z)^{-1}\one_{(1, 1)}\notag\\
&+\frac{H_2-H_1+z}{H_1-H_2}(2H_1-H_2)(-H_1+2H_2+z)^{-1}(-H_1+2H_2+2z)^{-1}(H_2+z)^{-1}\one_{(1, 1)}\notag
\end{align}
%\Nawaz{The division only makes sense here when you consider the two last terms together, and define the division appropriately. Not sure if we should include this division or just simply it for them already } \Rachel{I want to include it. I cannot tell you how many people have asked me about this division. I want an example written out that I can point people to.}
The first two summands are the pullback of a multiple $c$ of $\one_{B\bmu_3}$, and this can be computed as in the previous paragraph. One obtains $c = 3/(2z^2)$. \note{the last two summands came from (1,0) and (0,1).}
For the second two summands, note that $\In{X\sslash T'}_{(1,1)}$ is 2-dimensional, so a priori the classes $H_i$ are not zero here. The denominators in \eqref{eq:big-thing} only make sense when we consider the sum of the last two terms together, and the division must be computed by finding an anti-invariant numerator divisible by $H_1-H_2$. There is a unique way to do this: see \cite[Lem~5.4.1]{nonab1}. Indeed, the coefficient of $z^{-2}$ in the last two terms of \eqref{eq:big-thing} is equal to
\begin{align*}
\frac{z}{H_2-H_1}&(-H_1+2H_2)\left(\frac{1}{z}\right)\left(\frac{1}{z}\right)\left(\frac{1}{2z}\right)\one_{(1, 1)} - \frac{z}{H_2-H_1}\left(\frac{1}{z}\right)\left(\frac{1}{2z}\right)\left(\frac{1}{z}\right)(2H_1-H_2)\one_{(1, 1)}
\end{align*}
which simplifies to $3/(2z^2)\one_{(1, 1)}$.
%\Nawaz{Does it looks like we are setting the $H_i = 0$ rather than Taylor expanding? Is it clear to reader? probably fine}\Rachel{tried to clarify. there is no taylor expansion.} \Nawaz{I meant in trying to find the $z^{-2}$ coefficient you write out the denominators via taylor expansion. However, I think the clarification fixes this confusion.}
% However, these add to the pullback of a class on $BS_3$, and in particular their sum must have degree 0. To compute it, observe that the sum of the second two summands may be written as
% \[
% -P(H_1, H_2) - P(H_2, H_1) + \frac{z}{H_2-H_1}(P(H_1, H_2)-P(H_2, H_1))
% \]
% where
% \[
% P(H_1, H_2) = (-H_1+2H_2)(H_1+z)^{-1}(2H_1-H_2+z)^{-1}(2H_1-H_2+2z)^{-1}.
% \]
% Every term in the sum $P(H_1, H_2) + P(H_2, H_1)$ has degree at least 1 in $H_i$ and hence does not contribute to the degree-0 part (these terms will cancel). One checks that
% \[
% P(H_1, H_2)-P(H_2, H_1) = (3/2)(-H_1+H_2)(1/z^2)
% \]
% and hence the desired degree-zero part is $3/(2z^2)$. 
\end{proof}

 \subsection{Weighted Grassmannian }\label{sec:wgr} Weighted Grassmannians were introduced in \cite{CR02}.
 Donagi and Sharpe \cite{DS08} give the following GIT data 
 %\Nawaz{Should we use the term GLSM or just say GIT?}
 for a certain weighted Grassmannian $wGr(2, 5)$:
\begin{itemize}
\item $X:=\AA^{10}$ is the space of $2 \times 5$ matrices $M$ over $\CC$
\item $G := GL(2)$, where $\Lambda \in GL(2)$ acts on $X$ by
\[
\Lambda \cdot M = \Lambda M \diag(1, 1, 1, \det(\Lambda), \det(\Lambda))
\]
\item $\theta \in \chi(G)$ is the determinant character
\end{itemize}
We choose $T \subset G$ to be the subgroup of diagonal matrices.
The weights of $X$ under the action of $T$ are 
\[
\begin{blockarray}{cccccccccc}
m_{11} & m_{12} & m_{13} & m_{14} & m_{15} & m_{21} & m_{22} & m_{23} & m_{24} & m_{25}  \\
\begin{block}{(cccccccccc)}
  1 & 1 & 1 & 2 & 2 & 0 & 0 & 0 & 1 & 1 \\
  0 & 0 & 0 & 1 & 1 & 1 & 1 & 1 & 2 & 2 \\
\end{block}
\end{blockarray}
 \]
where $(m_{ij})_{1\leq i \leq 2, 1\leq j \leq 5}$ are coordinates on an element of $X=M_{2 \times 5}$.
The set $X^{ss}(G)$ is equal to the set of full rank matrices, and each element of $\Stab_G(X^{ss}_{\theta}(G))$ is conjugate to an element of $\{ (1,1),(1,\zeta_2), (\zeta_2, 1), (\zeta_3, \zeta_3), (\zeta_3^2, \zeta_3^2) \} \subset T$.
% \todo{should we include the proof?} \Nawaz{I wouldn't mind, but it's already getting long without it}

\begin{remark}\label{rmk:w-inv-sectors}
The elements $(1, \zeta_2)$ and $(\zeta_2, 1)$ of $\Stab_G(X^{ss}_{\theta}(G))$ are not Weyl-invariant. This demonstrates that stacks with abelian isotropy groups may have Chen-Ruan cohomology that is not all supported on Weyl-invariant sectors.
\end{remark}

\subsubsection{$I$-function}\label{sec:wgr-ieff}
The minimal anticones for $X$ with respect to $T $ are $\{i, 5+j\}$ where $i, j \in \{1, \ldots, 5\}$, and of these the $G$-effective ones are precisely those with $i \neq j$. We have

\begin{center}
{\tabulinesep=.8mm
 \begin{tabu}{c|c||c|c}
 $\sA$ & $C_{\sA}$ &$\sA$ & $C_{\sA}$\\ \hline
 % first row
  $i, j \in \{1,2,3\}$& 
$\left\{ \begin{array}{c}
  \tilde \beta\in \ZZ^2 \\
  \tilde \beta_1 \geq 0,\;\;  \tilde \beta_2 \geq 0 \\
\end{array} \right\}$ &
$\begin{array}{c}i \in \{1,2,3\}\\ j \in \{4,5\} \end{array}$&
$ \left\{ \begin{array}{c}
  \tilde \beta \in \ZZ^2+(0, \frac{1}{2})\ZZ\\
  \tilde \beta_1 \geq 0, \\ \tilde \beta_1 + 2\tilde \beta_2\geq 0 \\
\end{array} \right\}$ \\ 
% second row
$\begin{array}{c}i \in \{4,5\}\\j \in \{1,2,3\}\end{array}$ & $ \left\{ \begin{array}{c}
  \tilde \beta \in\ZZ^2+ (\frac{1}{2}, 0)\\
  \tilde \beta_2 \geq 0, \\ 2\tilde \beta_1 + \tilde \beta_2 \geq 0\\
\end{array} \right\}$ & 
$i,j \in \{4, 5\}$ & $ \left\{ \begin{array}{c}
  \tilde \beta \in \ZZ^2 + (\frac{1}{3}, \frac{1}{3})\ZZ \\
  2 \tilde \beta_1 + \tilde \beta_2  \geq 0,\\ \tilde \beta_1 + 2\tilde \beta_2  \geq 0
\end{array} \right\}$ \\

\end{tabu}
    }
\end{center}

% \begin{align*}
% \left\{ \begin{array}{c}
%   (\tilde \beta_1, \tilde \beta_2) \in \ZZ^2 \\
%   \tilde \beta_1 \geq 0,\;\;  \tilde \beta_2 \geq 0 \\
% \end{array} \right\} && 
%  \left\{ \begin{array}{c}
%   (\tilde \beta_1, \tilde \beta_2) \in \ZZ \times \frac{1}{2}\ZZ \\
%   \tilde \beta_1 \geq 0, \;\; \tilde \beta_1 + 2\tilde \beta_2\geq 0 \\
% \end{array} \right\}\\
%  \left\{ \begin{array}{c}
%   (\tilde \beta_1, \tilde \beta_2) \in \frac{1}{2}\ZZ \times \ZZ \\
%   \tilde \beta_2 \geq 0, \;\; 2\tilde \beta_1 + \tilde \beta_2 \geq 0\\
% \end{array} \right\}&&  \left\{ \begin{array}{c}
%   (\tilde \beta_1, \tilde \beta_2) \in \ZZ^2 + (\frac{1}{3}, \frac{1}{3})\ZZ \\
%   2 \tilde \beta_1 + \tilde \beta_2  \geq 0,\;\; \tilde \beta_1 + 2\tilde \beta_2  \geq 0
% \end{array} \right\} &
% \end{align*}
% From left to right, then top to bottom, $C_\sA$ is equal to the given set when $i, j \in \{1,2,3\}$, when $i \in \{1,2,3\}$ and $j \in \{4, 5\}$, when $i \in \{4, 5\}$ and $j \in \{1,2,3\}$, and when $i, j \in \{4,5\}$.
We choose $\theta$ as a basis for $\chi(G)$. Then the map $r$ in \eqref{eq:r} is identified with the linear map $\QQ^2 \to \QQ$ sending $(\tilde \beta_1, \tilde \beta_2)$ to $(\tilde \beta_1 + \tilde \beta_2)$, and 
\[
\KK^{\geq 0}(X, G, \theta) = (1/2)\ZZ_{\geq 0} \cup (1/3)\ZZ_{\geq 0} \subset \QQ.
\]
\note{the image has $1/3\ZZ$ and not just $2/3\ZZ$: for example $(-1/3, 2/3)$ is in here and it maps to $1/3$.}
% \Rachel{I think this should say $(1/2)\NN \cup (2/3)\NN$} \Nawaz{agreed: Changed and also changed the $2/3$ to $1/3$}

\label{sec:wgr-ifunc}
The roots of $G$ with respect to $T$ are $\pm(e_1-e_2)$. We compute
%\todo{on this i-func and previous, note what factors are all denom/num.}
\begin{align}\label{equ:wgr-ifunc}
I_{\tilde \beta}(z) = &C(\tilde \beta, e_1-e_2)^{-1}\; C(\tilde \beta, e_2-e_1)^{-1} \\
&C(\tilde \beta, e_1)^3\; C(\tilde \beta, e_2)^{3}\; C(\tilde \beta, 2e_1+e_2)^2\; C(\tilde \beta, e_1+2e_2)\;  \one_{g_{\tilde \beta}^{-1}}\notag,
\end{align}
where the factors $C(\tilde \beta, 2e_i+e_j)$ always appear as denominators.
One checks using Lemma \ref{lem:bound-degree} that $\deg(q^\beta) \geq 3$\note{ (this is realized, for example, by $\tilde \beta = (2/3, -1/3)$} so the mirror map $\mu(q, z)$ is zero. Then \eqref{eq:mirror-thm} shows that $J(0, \iota(Q), z) = I(q, z)$. Observe that $\iota$ is injective by Remark \ref{rmk:extend-pic}.

Finally, we can get a ``bigger'' slice of the Lagrangian cone as follows. The $(\Gm)^5$-action on $X$ given by scaling the columns of $M$ descends to an action on $X\sslash G$, and the induced action on the coarse moduli space of $X\sslash G$ has isolated fixed points and isolated 1-dimensional orbits.\note{I'm a little bit unsure about what the correct fixed locus or one-dimensional orbit on the coarse space of $X\sslash G$ is, but it is certainly contained in the stupidest definition (the one in terms of sets). This one agrees with the description of fixed points and 1-dim orbits for Gr. In a little more detail: any point of $X^{ss}$ can be put into "standard form" where two of the columns are equal to the identity matrix. A point is stupid-fixed iff, when it is in standard form, all other coordinates are zero. There are 10 such points. (This is almost certainly not a fixed point of $X\sslash G$ without taking a cover of the stack; maybe it is a fixed point of the coarse space?) A point generates a 1-dim orbit iff, when it is in standard form, exactly one other coordinate is not zero. Scaling that coordinate is in a single orbit.} Hence we may apply \cite[Thm~4.2]{CCK}. Using $t_0$ and $t_1$ as formal variables, we obtain that $\varphi^*J(t_0\one + t_1 c_1(\cL_\theta)\one, q, z)$ is equal to 
\[
\one + \sum_{\beta \in \KK^{\geq 0}(X, G, \theta) \setminus\{ 0\}} q^\beta \mathrm{exp}\left( \frac{1}{z}(t_0\one + t_1(H_1 + H_2)\one + \beta z)\right) \sum_{\tilde \beta \to\beta} I_{\tilde \beta}(z).
\]
%where $I_{\tilde \beta}(z)$ is as in \eqref{equ:wgr-ifunc} and $\beta$ in the summand $\beta z$ is interpreted as a rational number using \eqref{eq:this-iso}.
%\Nawaz{You are using the Givental I-function here, not the more general one of the theorem statement in CCK right? Also have questions: 
%(1) What does ``bigger slice" mean?
%(2) This is a bigger $I$-function, but we could have always done this, either with extensions or by clipping on the exponential and argue it is on the cone via the original I function and string/dilaton? I also think that Yang's mirror theorem eclipses the statement of CCK, and we can avoid the torus fixed point assumptions (I think). 
%(3). I think $\beta$z needs to be clarified?}
%\Rachel{I tried to address (3). Better?}

\subsubsection{Extension}\label{sec:wgr-extension}
We use Proposition \ref{prop:extend-presentation} to create a GIT presentation for $X\sslash_\theta G$ that ``sees'' the twisted sector indexed by $(\zeta_3, \zeta_3)\in T$. The resulting $I$-function has a complicated mirror map, so we will not write the formula, but we will use the computations in this section to work out the complete intersection example in Section \ref{sec:delpez}. 

% \todo{change all $t$'s to $\gamma$'s to be consistent with above} \Nawaz{Changed: Double-check if any missing}
%\Rachel{added the first couple sentences. did I get the right $\tilde \beta$?} \Nawaz{Take a look at this}
%Our goal is to get an $I$-function that ``sees'' the twisted sector indexed by $(\zeta_3, \zeta_3)\in T'$. 
By Theorem \ref{thm:only} (with $\tilde \beta = (-1/3, -1/3)$), if we define $\nu: \Gm \to GL(X)$  by 
\[
\nu(\gamma) \cdot M = M \diag(1,1,1,\gamma,\gamma) \quad \quad M \in X,\; \gamma\in \Gm.
\]
we get an extended presentation $(\bX, \bG , \vartheta_3)$ where the action of $\bG:=G \times \Gm$ on $\bX:=X \times \AA^1$ is induced by
\[
(\Lambda, \gamma) \cdot (M, y) = (\Lambda M \diag(1, 1, 1, \det(\Lambda)\gamma, \det(\Lambda)\gamma), \gamma y)
\]
for $(\Lambda, \gamma) \in GL(2) \times \Gm$ and $(M, y) \in X\times \AA^1$ .
We choose $\bT := T \times \Gm$ for our maximal torus. The weights of $\bX$ under the action of $\bT$ are the columns of the matrix
\[
\begin{blockarray}{ccccccccccc}
m_{11} & m_{12} & m_{13} & m_{14} & m_{15} & m_{21} & m_{22} & m_{23} & m_{24} & m_{25} & y \\
\begin{block}{(ccccccccccc)}
  1 & 1 & 1 & 2 & 2 & 0 & 0 & 0 & 1 & 1 & 0\\
  0 & 0 & 0 & 1 & 1 & 1 & 1 & 1 & 2 & 2 & 0\\
  0 & 0 & 0 & 1 & 1 & 0 & 0 & 0 & 1 & 1 & 1\\
\end{block}
\end{blockarray}.
 \]

\subsubsection{$I$-function}\label{sec:wgr-effective}
The minimal anticones for $\bX$ with respect to $\bT$ are $\{i, 5+j, 11\}$ where $i, j \in \{1, \ldots, 5\}$, and of these the $G$-effective ones are precisely those with $i \neq j$. 
\begin{center}
{\tabulinesep=.8mm
 \begin{tabu}{c|c||c|c}
 $\sA$ & $C_{\sA}$ &$\sA$ & $C_{\sA}$\\ \hline
 % first row
  $i, j \in \{1,2,3\}$& 
$\left\{ \begin{array}{c}
  \tilde \beta \in \ZZ^3 \\
  \tilde \beta_1 , \tilde \beta_2, \tilde \beta_3 \geq 0
\end{array} \right\}$ &
$\begin{array}{c}i \in \{1,2,3\}\\ j \in \{4,5\} \end{array}$&
$\left\{ \begin{array}{c}
  \tilde \beta \in \ZZ^3 + (0,\frac{1}{2},0)\ZZ  \\
  \tilde \beta_1, \tilde \beta_3\geq 0,\\ \tilde \beta_1 + 2\tilde \beta_2 + \tilde \beta_3\geq 0 \\
\end{array} \right\}$ \\ 
% second row
$\begin{array}{c}i \in \{4,5\}\\j \in \{1,2,3\}\end{array}$ & $ \left\{ \begin{array}{c}
  \tilde \beta \in\ZZ^3+ (\frac{1}{2},0,0)\ZZ\\
  \tilde \beta_2, \tilde \beta_3\geq 0,\\ 2\tilde \beta_1 + \tilde \beta_2 + \tilde \beta_3\geq 0\\
\end{array} \right\}$ & 
$i,j \in \{4,5\}$ & $\left\{ \begin{array}{c}
  \tilde \beta \in \ZZ^3 + (\frac{1}{3}, \frac{1}{3}, 0)\ZZ \\
  2 \tilde \beta_1 + \tilde \beta_2 + \tilde \beta_3 \geq 0,\\ \tilde \beta_3 \geq 0 \\
  \tilde \beta_1 + 2\tilde \beta_2 + \tilde \beta_3 \geq 0
\end{array} \right\}$ \\

\end{tabu}
    }
\end{center}
% \begin{align*}
% \left\{ \begin{array}{c}
%   (\tilde \beta_1, \tilde \beta_2, \tilde \beta_3) \in \ZZ^3 \\
%   \tilde \beta_1 \geq 0,\;\;  \tilde \beta_2 \geq 0,\;\; \tilde \beta_3 \geq 0 \\
% \end{array} \right\} && 
%  \left\{ \begin{array}{c}
%   (\tilde \beta_1, \tilde \beta_2, \tilde \beta_3) \in \ZZ \times \frac{1}{2}\ZZ \times \ZZ \\
%   \tilde \beta_1, \tilde \beta_3\geq 0,\;\; \tilde \beta_1 + 2\tilde \beta_2 + \tilde \beta_3\geq 0 \\
% \end{array} \right\}\\
%  \left\{ \begin{array}{c}
%   (\tilde \beta_1, \tilde \beta_2, \tilde \beta_3) \in \frac{1}{2}\ZZ \times \ZZ \times \ZZ \\
%   \tilde \beta_2, \tilde \beta_3\geq 0,\;\; 2\tilde \beta_1 + \tilde \beta_2 + \tilde \beta_3\geq 0\\
% \end{array} \right\}&&  \left\{ \begin{array}{c}
%   (\tilde \beta_1, \tilde \beta_2, \tilde \beta_3) \in \ZZ^3 + (\frac{1}{3}, \frac{1}{3}, 0)\ZZ \\
%   2 \tilde \beta_1 + \tilde \beta_2 + \tilde \beta_3 \geq 0,\;\; \tilde \beta_3 \geq 0 \\
%   \tilde \beta_1 + 2\tilde \beta_2 + \tilde \beta_3 \geq 0
% \end{array} \right\} &
% \end{align*}
% From left to right, then top to bottom, $C_\sA$ is equal to the given set when $i, j \in \{1,2,3\}$, when $i \in \{1,2,3\}$ and $j \in \{4, 5\}$, when $i \in \{4, 5\}$ and $j \in \{1,2,3\}$, and when $i, j \in \{4,5\}$.
The lattice $\chi(\bG)$ has a basis consisting of the characters induced by $\theta \in \chi(G)$ and the identity in $\chi(\Gm)$. The map $r$ in \eqref{eq:r} is then identified with the linear map $\QQ^3 \to \QQ^2$ sending $(\tilde \beta_1, \tilde \beta_2, \tilde \beta_3)$ to $(\tilde \beta_1 + \tilde \beta_2, \tilde \beta_3)$, and we have
 \begin{equation}\label{eq:wgr-effective}
\KK^{\geq 0}(X, G, \theta)= \left\{
\begin{array}{c}
(\beta_1, \beta_2) \in (\frac{1}{2}\ZZ\times \ZZ) \cup (\frac{1}{3}\ZZ \times \ZZ)\\
\beta_2 \geq 0,\;\; 3\beta + 2\beta_2 \geq 0
\end{array}
\right\} \subset \QQ^2.
\end{equation}
%\Nawaz{TODO: Check this. $3\beta + d \geq0$ certainly true, but does $3\beta + d \geq0 $ imply toric conditions?}

By homogeneity, the $I$-function for $(\bX, \bG, \vartheta_3)$ will have unwieldy asymptotics: for every integer $k$, the class $\tilde \beta_k = (-k, -k, 3k)$ is in $C_{\{4, 10, 11\}}$, but if $\beta_k = r(\tilde \beta_k)$ then 
\[
\deg(q^{\beta_k}) = -9k-9k+15k = -3k.
\]
Hence the $I$-function may contain arbitrarily high powers of $z$. 

\subsection{Weighted Flag Space}\label{sec:wfl}
Weighted homogeneous spaces were introduced in \cite{CR02}: they include the weighted Grassmannian of Section \ref{sec:wgr} and also weighted flag spaces.
In this section we compute the $I$-function of a weighted version of the variety of complete flags in $\CC^3$. It has the following GIT data: 
%\todo{$(A, B) = (M, N)$ and $(\alpha, \Lambda, \gamma) = (\rho, \Lambda, \mu)$}
\begin{itemize}
\item $X := \AA^8$ is viewed as the space of pairs $(M, N)$ where $M$ is a $1 \times 2$ matrix and $N$ is a $2 \times 3$ matrix
\item $G := \left(\Gm \times SL(2)\times \Gm\right)/\bmu_2$, where $(\rho, \Lambda, \mu) \in \Gm \times SL(2) \times \Gm$ acts on $X$ by 
\[(\rho, \Lambda, \mu) \cdot (M, N) = (\rho M \Lambda^{-1}, \Lambda N \diag(\mu, \mu^3, \mu^3)) \]
   and $\bmu_2$ is the subgroup $(\zeta_2, \diag(\zeta_2, \zeta_2), \zeta_2) \subset \Gm \times SL(2) \times \Gm$
%   generated by $(-1, \begin{pmatrix} -1 & 0 \\ 0 & -1 \end{pmatrix}, -1)$.  
\item $\theta$ is induced by the function $\Gm \times SL(2) \times \Gm \to \Gm$ sending $(\rho, \Lambda, \mu)$ to $\rho\mu^5$.  
\end{itemize}

 %possibly move this later
 We choose $T' \subset G$ to be the quotient of the subgroup $(\rho, \diag(\lambda, \lambda^{-1}), \mu) \subset \Gm \times SL(2) \times \Gm$. We prefer to use a different basis for $T'$: there is an isomorphism
%  maximal torus given by the diagonal matrix subgroup of $\SL_2$, i.e. we have $T = \{ (\alpha, \begin{pmatrix} \lambda & 0 \\ 0  & \lambda^{-1} \end{pmatrix}, \gamma) \} / \mu_2 \subset G$.  For convenience, we apply the isomorphism 
 \begin{equation} \label{eq:wfl-T-iso} 
 T' \to \Gm^3 =: T, \quad\quad (\rho, \diag(\lambda, \lambda^{-1}), \mu) \to (\rho\lambda, \mu\lambda, \mu\lambda^{-1})
 \end{equation}
%   and will subsequently work with $T'$ as our maximal torus.\footnote{better way to phrase this discussion?}
 Let $M = (m_1, m_2)$ and $N = (n_{ij})_{1\leq i \leq 2, 1 \leq j \leq 3}$ be coordinates on $X$.
%  Now let 
%  \[
%  \begin{pmatrix} x_1 & x_2 \end{pmatrix} \times \begin{pmatrix} y_{11} & y_{12} & y_{13} \\ y_{21} & y_{22} & y_{23} \end{pmatrix} \in X
%  \]
%  denote the coordinates of $X$. 
 The weight matrix for the $T$ action on $X$ is  
 \[
\begin{blockarray}{ccccccccc}
m_1 & m_2 & n_{11} & n_{12} & n_{13} & n_{21} & n_{22} & n_{23}\\
\begin{block}{(ccccccccc)}
 1& 1& 0& 0& 0 & 0 & 0& 0 \\
 -1 & 0 & 1& 2& 2 &0&1& 1\\
 1&0&0&1&1&1&2&2\\
\end{block}
\end{blockarray}
 \]

The character $\theta$ restricts to the character of $T$ given by $(r, s, t) \to rs^2t^3$. The locus $X^{ss}(G)$ is equal to the set of pairs $(M, N)$ where both $M$ and $N$ have full rank. Each element of $\Stab_G(X^{ss}_\theta(G))$ is conjugate to an element of $T'$ that maps to the set $\{(1,1,1), (1, \zeta_3, \zeta_3), (1, \zeta_3^2, \zeta_3^2), (1, 1, \zeta_2), (1, \zeta_2, 1), (\zeta_2, 1, \zeta_2), (\zeta_2, \zeta_2, 1)\} \subset T.$

\subsubsection{Extension}
% \todo{change all $t$s to $\gamma$s to be consistent with above}\Nawaz{Double-check if any missing}
%\Rachel{wrote first couple sentences. do I have the right group element and $\tilde \beta$?} \Nawaz{Double-check}
By Theorem \ref{thm:only} (with $\tilde \beta = (0,-1/3,-1/3)$), if we define $\nu: \Gm \to GL(X)$ by 
\[
\nu(\gamma) \cdot (M, N) = (M, N \diag(1, \gamma, \gamma)) \quad \quad \quad (M, N) \in X,\; \gamma \in \Gm,
\]
we get an extended presentation $(\bX, \bG, \vartheta_6)$ where the action of $\bG:=G \times \Gm$ on $\bX:=X \times \AA^1$ is induced by
\[
(\rho, \Lambda, \mu, \gamma) \cdot (M, N, y) = (\rho M \Lambda^{-1}, \Lambda N \diag(\mu, \gamma\mu^3, \gamma\mu^3), \gamma z).
\]
for $(\rho, \Lambda, \mu, \gamma) \in \Gm \times SL(2) \times \Gm \times \Gm$ and $(M, N, y) \in X \times \AA^1$. 

 The maximal torus $T' \times \Gm \subset G \times \Gm = \bG$ is now isomorphic to $\bT:=\Gm^4$ via \eqref{eq:wfl-T-iso} on the first three factors and the identity map on the last factor. The weights of $\bX$ under the action of $\bT$ are
  \[
\begin{blockarray}{cccccccccc}
m_1 & m_2 & n_{11} & n_{12} & n_{13} & n_{21} & n_{22} & n_{23} & y\;\;\\
\begin{block}{(cccccccccc)}
 1& 1& 0& 0& 0 & 0 & 0& 0 & 0\;\;\\
 -1 & 0 & 1& 2& 2 &0&1& 1& 0\;\;\\
 1&0&0&1&1&1&2&2& 0\;\;\\
 0&0&0&1&1&0&1&1&1\;\;\\
\end{block}
\end{blockarray}
 \]

\note{
\begin{remark}\label{rmk:bad-extension}
One also extend further to create an $I$-function that ``sees'' the twisted sector indexed by $(1, \zeta_3^2, \zeta_3^2) \in T'$, but the resulting $I$-function will have unwieldy asymptotics. On the other hand, if one attempts to apply Strategy \ref{strat:only} with $\tilde \beta = (0, -1/2, 0)$, hoping to gain information about the $(1, \zeta_2, 1)$-sector, one discovers that the resulting one-parameter subgroup $\nu(\gamma): \Gm \to GL(X)$ does not commute with the image of $G$ in $GL(X)$. This is related to the fact that $(1, \zeta_2, 1)$ is not Weyl-invariant.
\end{remark}
}
 
% \todo{Include remark about how one can extend to incorporate $\one_{2/3}$, but that incorporating some other twisted class fails} 
 %\Rachel{Nawaz is this your TODO? If it is mine I do not know what I meant by it.} \Nawaz{I don't remmber if this is mine, but it could be. I think it's about how $\one_{2/3}$ can be tracked via extension, but $\one_{1/2}$ can't }
 %\Rachel{remark ready for review}

 \subsubsection{I-function}\label{sec:wfl-effective}
 The minimal anticones for $\bX$ with respect to $\bT$ are $\{i, j+2, k+5, 9\}$ where $i=1$ or 2 and $j, k \in \{1,2,3\}$, and of these the $G$-effective ones are those with $j \neq k$. We compute

\begin{center}
{\tabulinesep=.8mm
 \begin{tabu}{c|c||c|c}
 $\sA$ & $C_{\sA}$ &$\sA$ & $C_{\sA}$\\ \hline
 % first row
  $\begin{array}{c}\{1, 3, 7, 9\}\\ \{1, 3, 8, 9\} \end{array}$& 
$\left\{ \begin{array}{c} 
\tilde \beta \in \ZZ^4 + (\frac{1}{2},0, \frac{1}{2}, 0)\ZZ \\
\tilde \beta_1-\tilde \beta_2+\tilde \beta_3 \geq 0 \\
\tilde \beta_2 + 2\tilde \beta_3 +\tilde \beta_4 \geq 0\\
\tilde \beta_2, \tilde \beta_4 \geq 0
\end{array} \right\}$ &
$\begin{array}{c}\{2, 3, 7, 9\}\\ \{2, 3, 8, 9\} \end{array}$&
$\left\{ \begin{array}{c} 
\tilde \beta \in \ZZ^4 + (0,0, \frac{1}{2}, 0)\ZZ \\
\tilde \beta_1, \tilde \beta_2 \geq 0 \\
\tilde \beta_2 + 2\tilde \beta_3 +\tilde \beta_4 \geq 0\\
\tilde \beta_4 \geq 0
\end{array} \right\}$ \\ 
% second row
$\begin{array}{c}\{1, 4, 6, 9\}\\ \{1, 5, 6, 9\}\end{array}$ & $\left\{ \begin{array}{c} 
\tilde \beta \in \ZZ^4 + (\frac{1}{2}, \frac{1}{2}, 0, 0)\ZZ \\
\tilde \beta_1-\tilde \beta_2+\tilde \beta_3 \geq 0 \\
2\tilde \beta_2 + \tilde \beta_3 +\tilde \beta_4 \geq 0 \\
\tilde \beta_3, \tilde \beta_4 \geq 0
\end{array} \right\}$ & 
$\begin{array}{c}\{2, 4, 6, 9\}\\ \{2, 5, 6, 9\}\end{array}$ & $\left\{ \begin{array}{c} 
\tilde \beta \in \ZZ^4 + (0, \frac{1}{2}, 0, 0)\ZZ \\
2\tilde \beta_2 + \tilde \beta_3 +\tilde \beta_4 \geq 0 \\
\tilde \beta_1, \tilde \beta_3 \geq 0\\
\tilde \beta_4 \geq 0
\end{array} \right\}$ \\
% third row
$\begin{array}{c} \{1, 4, 7, 9\}\\ \{1, 4, 8, 9\}\\ \{1, 5, 7, 9\}\\ \{1, 5, 8, 9\}\end{array}$  & $\left\{ \begin{array}{c} 
\tilde \beta \in \ZZ^4 + (0, \frac{1}{3}, \frac{1}{3}, 0)\ZZ \\
\tilde \beta_1-\tilde \beta_2+\tilde \beta_3 \geq 0 \\
2\tilde \beta_2 + \tilde \beta_3 +\tilde \beta_4 \geq 0 \\
\tilde \beta_2 + 2\tilde \beta_3 +\tilde \beta_4 \geq 0\\
\tilde \beta_4 \geq 0
\end{array} \right\}$ &$ \begin{array}{c} \{2, 4, 7, 9\}\\ \{2, 4, 8, 9\}\\ \{2, 5, 7, 9\}\\ \{2, 5, 8, 9\}\end{array}$  & $\left\{ \begin{array}{c} 
\tilde \beta \in \ZZ^4 + (0, \frac{1}{3}, \frac{1}{3}, 0)\ZZ \\
\tilde \beta_1 \geq 0 \\
2\tilde \beta_2 + \tilde \beta_3 +\tilde \beta_4 \geq 0 \\
\tilde \beta_2 + 2\tilde \beta_3 +\tilde \beta_4 \geq 0\\
\tilde \beta_4 \geq 0
\end{array} \right\}$ \\

\end{tabu}
    }
\end{center}
 
The lattice $\chi(G)$ is the sublattice of $\chi(\Gm \times SL(2) \times \Gm)$ consisting of characters that have $\bmu_2$ in their kernel. A basis for $\chi(G)$ is given by the characters $(\rho, \Lambda, \mu) \mapsto \rho\mu$ and $(\rho, \Lambda, \mu) \mapsto \mu^2$, which restrict to the characters $e_1+e_2$ and $e_2+e_3$ of $T$, respectively. A basis for $\chi(\bG)$ consists of the characters induced by these two and the projection character $G \times \Gm \to \Gm$. With these bases the map $r$ is identified with the map $\QQ^4 \to \QQ^3$ sending $(\tilde \beta_1, \ldots, \tilde \beta_4)$ to $(\tilde \beta_1+\tilde \beta_2, \tilde \beta_2 + \tilde \beta_3, \tilde \beta_4)$, and we have $\KK^{\geq 0}(\bX, \bG, \vartheta) = r(\bigcup C_{\sA})$.
%The set $\KK_{\geq 0}$ is the image under $r$ of the union of the $C_{\sA}$. 

The roots of $\Gm \times SL(2) \times \Gm$ are given by the characters $\Gm \times \diag(\lambda, \lambda^{-1}) \times \Gm \mapsto \lambda^{\pm 2}$,\note{one way to get these is to restrict the usual roots of $GL(2)$ to the subtorus $\diag(\lambda, \lambda^{-1})$} and hence the roots of $G$ with respect to $T$ (also the roots of $\bG$ with respect to $\bT$) are equal to $\pm(e_2-e_3)$. 
%These are also the roots of $\bG$ with respect to $T' \times \Gm$.
\note{Since $G$ is the quotient of $\Gm \times SL(2) \times \Gm$ by a finite group, the roots of $G$ with respect to $T'$ are identified with the roots of $\Gm \times SL(2) \times \Gm$ with respect to the subgroup $\Gm \times \diag(\lambda, \lambda^{-1}) \times \Gm$ under the map \eqref{eq:wfl-T-iso}. }
We have

\begin{align}\label{eq:wfl-ifunc}
I_{\tilde \beta}(z) = &C(\tilde \beta, e_2-e_3)^{-1}\; C(\tilde \beta, e_3-e_2)^{-1}\;C(\tilde \beta, e_1)\;\\
& C(\tilde \beta, e_1-e_2+e_3)\; C(\tilde \beta, e_2)\; C(\tilde \beta, e_3)\; C(\tilde \beta, 2e_2+e_3)^2\; C(\tilde \beta, e_2+2e_3)^2 \;\one_{g_{\tilde \beta}^{-1}} \notag
\end{align}
 Define the element $\one_{1/3} \in H^\bullet(\In{X\sslash G}; \QQ)$ to be the fundamental class of the sector corresponding to $(1, \zeta_3, \zeta_3) \in T$. 
 \note{(There are more twisted sectors than this, but we will not need notation for their fundamental classes.)} 
By arguments similar to those used in Lemma \ref{lem:bs3-asymptotics}, 
\begin{equation}\label{eq:wfl-ifunc2}
I(q, z) = \one + \frac{1}{z}q_1^{-1/3}q_2^{-2/3}q_3 \one_{1/3} + \OO(z^{-2}).
\end{equation}
The morphism $\iota$ of Novikov variables is injective, since the arrows
\[
H_2^{alg}(\cX; \QQ) \to \Hom(\Pic(X\sslash G), \QQ) \to \Hom(\Pic([X/G]), \QQ) \to \Hom(\Pic([\bX/\bG]), \QQ).
\]
are all injective by Remark \ref{rmk:extend-pic} and Corollary Remark \ref{rmk:embed}. We will write $(q_1, q_2) = \iota(Q)$. If $\mu(q_1, q_2, q_3) = q_1^{-1/3}q_2^{-2/3}q_3\one_{1/3}$ then 
\[
J^{X\sslash G}(q_1^{-1/3}q_2^{-2/3}q_3\one_{1/3}, (q_1,q_2), z) = I(q, z).
\]
Setting $q_3 = q_1^{1/3}q_2^{2/3}x$ gives a restriction of the $J$-function that can be used to compute invariants with insertions in the sector corresponding to $(1, \zeta_3, \zeta_3) \in T'$.

\note{
\begin{lemma}\label{lem:wfl-ifunc2}
The equality \eqref{eq:wfl-ifunc2} holds.
\end{lemma}
\begin{proof}
We use Lemma \ref{lem:bound-degree} to bound $\deg(q^\beta)$: we minimize $\bxi = 2e_1 + 6e_2 + 8e_3 + 5e_4$ on the sets $C_\sA$ given in Section \ref{sec:wfl-effective} and find that $\deg(q^\beta) \geq 1/3$ for nontrivial $\beta$. This implies that $I(q, z) = \one + O(z^{-1}).$

By the homogeneity of $I^{\bX, \bG, \vartheta_6}$ discussed in Section \ref{sec:implications}, a term of $I(q, z)$ with $z^{-1}$ must also contain $q^\beta$ and $\alpha \in H^k(\In{\bX\sslash \bG}_{(g_{\tilde \beta}^{-1})}; \QQ)$ satisfying $\deg(q^\beta) + k + \age_{\bX\sslash \bG}(g) = 1$. We compute the ages of the elements of $T' \times \Gm$ that represent conjugacy classes in $\Stab_\bG(\bX^{ss}(\bG))$ (this can be done using the generalized Euler sequence on $\bX \sslash \bG$ as in the proof of Lemma \ref{lem:bound-degree}):
\[
\age_{\bX\sslash \bG}(1,1,1,1) = 0 \quad \quad 
\age_{\bX\sslash \bG}(1,\zeta_3,\zeta_3,1) = 2/3 \quad \quad 
\age_{\bX\sslash \bG}(1,\zeta_3^2,\zeta_3^2,1) = 4/3 
\]
The ages of the remaining four elements are all equal to 1. Hence, the only possibilities are $\deg(q^\beta)=1$, $g_{\tilde \beta} = (1,1,1,1)$, and $k=0$; or $\deg(q^\beta) = 1/3$, $g_{\tilde \beta}^{-1} = (1, \zeta_3, \zeta_3, 1)$. 

The classes $\tilde \beta \in \bigcup C_{\sA}$ satisfying $\deg(q^{r(\tilde \beta)}) = 1$ are
\[
(-1/2, -1/2, 0, 1) \quad \quad (0,0,-1/2, 1) \quad \quad (0, -1, -1, 3).
\]
When $\tilde \beta$ is equal to either of the first two, $g_{\tilde \beta}$ is not the identity. When $\tilde \beta = (0, -1, -1, 3),$ the corresponding summand of \eqref{eq:wfl-ifunc} has degree at least 1 in the $H_i$. So none of these summands of \eqref{eq:wfl-ifunc} contribute to the $z^{-1}$ part of $I(q, z)$. 

The only class $\tilde \beta \in \bigcup C_{\sA}$ satisfying $\deg(q^{r(\tilde \beta)}) = 1/3$ is $(0, -1/3, -1/3, 1)$. By direct computation we find that the coefficient of $z^{1}q^{r(0,-1/3, -1/3, 1)} = z^{-1}q_1^{-1/3}q_2^{-2/3}q_3$ in \eqref{eq:wfl-ifunc} is $\one_{1/3}$.

\end{proof}

}

\subsection{Bundle on a weighted Grassmannian}\label{sec:wbun}
% This example is motivated by the Grassmannian flop of [cite who?]. We use the following GIT data.
We compute the $I$-function of a noncompact target: the total space of a tautological subbundle on a weighted Grassmannian. A GIT presentation for our target consists of the following data.
\begin{itemize}
\item $X:=\AA^{12}$ is viewed as the space of pairs $(M, N)$
%\Rachel{changed $E$ to $N$} 
where $M$ is a $2 \times 5$ matrix and $N$ is a $1 \times 2$ matrix over $\CC$.
\item $G:= GL(2)$, where $\Lambda \in GL(2)$ acts on $X$ by 
\[
\Lambda \cdot (M, N) = (\Lambda M \diag(1,1,1, \det(\Lambda), \det(\Lambda)), \det(\Lambda)^{-1} N \Lambda^{-1}).
\]
\item $\theta \in \chi(G)$ is the determinant character
\end{itemize}
We choose $T \subset G$ to be the diagonal matrices. The weight matrix for the $T$-action on $X$ is given by
\[
\begin{blockarray}{cccccccccccc}
m_{11} & m_{12} & m_{13} & m_{14} & m_{15} & m_{21} & m_{22} & m_{23} & m_{24} & m_{25} &  n_1 & n_2\;\;\\
\begin{block}{(cccccccccccc)}
  1 & 1 & 1 & 2 & 2 & 0 & 0 & 0 & 1 & 1 & -1 & -2\;\;\\
  0 & 0 & 0 & 1 & 1 & 1 & 1 & 1 & 2 & 2 & -2 & -1\;\;\\
\end{block}
\end{blockarray}
 \]
where $(m_{ij})$ are coordinates on an element of the $2\times 5$ matrix $M$, and $N = (n_1, n_2)$. The set $X^{ss}(G)$ is equal to the locus of pairs $(M, N)$ where $M$ has full rank, so the projection $(M, N) \to M$ realizes $X\sslash G$ as the total space of a rank-2 vector bundle on the weighted Grassmanian of Section \ref{sec:wgr}. In particular $\Stab_G(X^{ss}_\theta(G))$ is the same as for that example.

\subsubsection{$I$-function}
The sets $C_{\sA}$ are the same as those computed in Section \ref{sec:wgr-ieff} for the weighted Grassmannian.
%The computation of the $I^{X, G, \theta}$-effective classes is exactly the same as for the weighted Grassmannian in Section \ref{sec:wgr-ieff}; i.e., the sets $C_{\sA}$ are the same as in that section. 
We have that 
\[\KK^{\geq 0}(X, G, \theta) = (1/2)\ZZ_{\geq 0} \cup (1/3)\ZZ_{\geq 0} \subset \QQ^2, \quad \quad \quad and\]
\begin{align*}
I_{\tilde \beta}(z) = & C(\tilde \beta, e_1-e_2)^{-1}\; C(\tilde \beta, e_2-e_1)^{-1}\;C(\tilde \beta, e_1)^3\; C(\tilde \beta, e_2)^3\\
&  C(\tilde \beta, 2e_1+e_2)^2\; C(\tilde \beta, e_1+2e_2)^2\; C(\tilde \beta, -2e_1-2e_2)\; C(\tilde \beta, -e_2-2e_1)\; \one_{g_{\tilde \beta}^{-1}}\notag.
\end{align*}
The factors with $2e_1+e_2$ and $e_2+2e_2$ (resp. $-e_1-2e_2$ and $-2e_1-e_2$) will always appear as denominators (resp. numerators).
One checks using Lemma \ref{lem:bound-degree} that $\deg(q^\beta) \geq 2$ for $I$-effective $\beta$, so the mirror map $\mu(q, z)$ is $0$. The map $\iota$ is injective by Remark \ref{rmk:extend-pic}.

\subsubsection{Extension}
% \Rachel{I'm choosing not to mention the Strategy in this example since we don't even compute the $I$-function; point is that we need to take $r>1$.}
By Theorem \ref{thm:only} (with $\tilde \beta = (-1/3, -1/3)$), if we define
$\nu: \Gm \to GL(X)$ to be the 1-parameter subgroup given by the action
\[
\gamma \cdot (M, N) = (M \diag(1,1,1,\gamma, \gamma), \gamma^{-1}N),
\]
% \todo{I think the $t$ below is supposed to be the same variable as the $\gamma$ above? If so, change all $t$s to $\gamma$s to be consistent with above.}
we get an extended presentation $(X \times \AA^1, G \times \Gm, \vartheta_3)$ where the action of $G \times \Gm$ on $X \times \AA^1$ is induced by
\[
(\Lambda, \gamma) \cdot (M, N, y) = (\Lambda M \diag(1,1,1, \det(\Lambda)\gamma, \det(\Lambda)\gamma), \det(\Lambda)^{-1}\gamma^{-1} N \Lambda^{-1}, \gamma)
\]
for $(\Lambda, \mu, \gamma) \in SL(2) \times \Gm \times \Gm$ and $((M, N), y) \in X \times \AA^1$.
% If we take $r=3$ and $\nu'(t) = (t, t) \in T'$ then the hypotheses of Proposition \ref{prop:extend-presentation} are satisfied.
The $I$-effective classes for $(X\times \AA^1, G\times \Gm, \vartheta_3)$ are the same as for the extended presentation in Section \ref{sec:wgr-effective}. As in that example, the $I$-function has unwieldy asymptotics so we omit an explicit formula. 
 However, $\iota$ is injective by Remark \ref{rmk:extend-pic} and Remark \ref{rmk:embed}

\subsection{Complete intersection in a weighted Grassmannian}\label{sec:delpez}
In this section we compute the full quantum period of the orbifold del Pezzo surface $X_{1, 7/3}$ studied in \cite[Sec~6.2]{OP}. We define (extended) CI-GIT data as follows:
\begin{itemize}
\item $\bX := X\times \AA^1, \bG := GL(2)\times \Gm,$ and $\vartheta_3$ are the extended GIT data in Section \ref{sec:wgr-extension} (we will also use the maximal torus $\bT = T \times \Gm$ and the basis for $\chi(\bG)$ introduced there) 
\item $E:=L^{\oplus 4}$ where $L$ is the 1-dimensional representation of $\bG$ induced by $(\Lambda, \gamma) \mapsto \det(\Lambda)^2\gamma$ for $(\Lambda,\gamma) \in GL(2)\times \Gm$
\item $s$ is defined in Lemma \ref{lem:choose-s} below.
\end{itemize}
We will denote $\bY := Z(s)$ and $Y := \bY \cap (X \times \{1\})$. It follows from Lemma \ref{lem:stack-iso} that $\bY \sslash \bG \simeq Y\sslash G$, and according to \cite{CH}, this stack is isomorphic to $X_{1, 7/3}$.

We now show that  $(\bX, \bG, \vartheta_3, E, s)$ is a GI-GIT presentation for $\bY \sslash \bG \simeq Y\sslash G$; the proof is essentially Bertini's theorem. 
If $G'$ is equal to $G$ or $\bG$, $X'$ is equal to $X$ or $\bX$, and $E'$ is any $G'$-representation, write $E'_{X'}$ for the $G$-equivariant vector bundle $E' \times X'\to X'$ and $\Gamma^{G'}(X', E'_{X'})$ for its equivariant global sections.
Recall the coordinates $((m_{ij}), y)$ on $\bX$ and let $\Delta_{ij}:= m_{1i}m_{2j}-m_{1j}m_{2i}$. Observe that $L$ has weight $(2, 2, 1)$ with respect to $\bT$. Hence $\Gamma^\bG(\bX, L_{\bX})$ is spanned by the set\note{a $\bG$ invariant section is invariant under $SL(2)$ and hence is a polynomial in these $\Delta$s. Now find the polynomials of weight $(2, 2)$.}
\[
\bS= \left\{\begin{array}{c}y\Delta_{12}^2, \;y\Delta_{13}^2,\; y\Delta_{23}^2,\; y\Delta_{12}\Delta_{13},\; y\Delta_{12}\Delta_{23},\; y\Delta_{13}\Delta_{23},\\
\Delta_{14},\; \Delta_{15},\; \Delta_{24}, \;\Delta_{25},\; \Delta_{34}, \;\Delta_{35}
\end{array}\right \}.
\]

% \Rachel{I updated (simplified and I think corrected) the proof of the next lemma. Nawaz, please make sure it looks good to you!}
% \Nawaz{Is the $L_X$ notation new? Is it defined earlier?}
% \Nawaz{}
\begin{lemma}\label{lem:choose-s}
There is a dense subset $U \subset \Gamma^{\bG}(\bX, E_\bX)$ such that if $s \in U$ then $s$ is a regular section and $Z(s) \cap \bX^{ss}(\bG)$ is smooth and connected with no $\bmu_2$-stabilizers.
\end{lemma}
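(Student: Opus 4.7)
The plan is to realize $U$ as the intersection of four Zariski-open dense subsets of $V^4 := \Gamma^{\bG}(\bX, L_\bX)^{\oplus 4}$, one for each of the lemma's conclusions (regularity, smoothness of $Z(s)\cap\bX^{ss}(\bG)$, connectedness, and absence of $\bmu_2$-stabilizers). Each of the four defining conditions is Zariski-open on $V^4$, so it suffices to verify that each holds on a dense subset.

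For regularity I would invoke Lemma~\ref{lem:regular} applied to $V = \Gamma^{\bG}(\bX, L_\bX)$. The hypothesis reduces to showing that every component of the common vanishing locus $Z(\bS)\subset\bX$ has codimension at least $4$. This follows from a case analysis on $\bX = X\times\AA^1$: on $\{y\neq 0\}$, vanishing of the products $y\Delta_{ij}\Delta_{kl}$ with $i,j,k,l\in\{1,2,3\}$ together with the six $\Delta_{ij}$ with $i\le 3,\,j\in\{4,5\}$ forces every $2\times 2$ minor of $M$ to vanish, placing the point in the codimension-$4$ rank-one determinantal variety; on $\{y=0\}$ a further case split on the columns of $M$ again gives codimension $\geq 4$.

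For smoothness I would apply an iterated Kleiman--Bertini argument on the quotient stack $[\bX^{ss}(\bG)/\bG]$. The key input is base-point-freeness of $L_\bX|_{\bX^{ss}(\bG)}$: given $(M,y)\in\bX^{ss}(\bG)$ (so $y\neq 0$ and $\rk(M)=2$), either some $\Delta_{ij}$ with $i,j\in\{1,2,3\}$ is nonzero---making $y\Delta_{ij}^2$ a section not vanishing at $(M,y)$---or the rank-$2$ configuration of $M$ involves column $4$ or $5$, in which case some $\Delta_{i,4}$ or $\Delta_{i,5}$ is a nonvanishing section. Bertini then yields a smooth codimension-$4$ subvariety of $\bX^{ss}(\bG)$ for a dense open set of $s$.

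For the absence of $\bmu_2$-stabilizers I would analyze the fixed loci of the two $\bmu_2$-subgroups of $\bG$ generated (up to conjugation) by $g_1:=(1,\zeta_2,1)$ and $g_2:=(\zeta_2,1,1)\in\bT$. The fixed locus $F$ of $g_1$ in $\bX$ is cut out by $m_{14}=m_{15}=m_{21}=m_{22}=m_{23}=0$, and on $F\cap\bX^{ss}(\bG)$ both $\mathbf{m}_1 := (m_{11},m_{12},m_{13})$ and $\mathbf{m}_2 := (m_{24},m_{25})$ are nonzero. A weight computation identifies $V|_F$ with the $6$-dimensional space of bilinear forms $\mathbf{m}_1^T A\mathbf{m}_2$, and this linear system factors through the Segre embedding $\PP^2\times\PP^1\hookrightarrow\PP^5$, whose image has dimension $3$; four generic hyperplanes in $\PP^5$ miss this image by dimension count, yielding $Z(s)\cap F\cap\bX^{ss}(\bG)=\emptyset$ for generic $s$, and the case of $g_2$ is symmetric. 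Finally, connectedness of the generic smooth codimension-$4$ complete intersection in the smooth projective stack $\bX\sslash_{\vartheta_3}\bG = wGr(2,5)$ will follow by an iterated Bertini--Lefschetz argument. This last step is where I expect the main technical obstacle, since one must verify that the line bundle descended from $L$ is sufficiently positive on $wGr(2,5)$---equivalently, that the linear system $V$ is ample after descent---so that classical connectedness results apply in the stacky setting.
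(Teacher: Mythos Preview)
Your smoothness argument has a genuine gap: the linear system $V=\Gamma^{\bG}(\bX,L_\bX)$ is \emph{not} base-point-free on $\bX^{ss}(\bG)$. Your case analysis overlooks the possibility that the only nonvanishing $2\times 2$ minor of $M$ is $\Delta_{45}$, and $\Delta_{45}$ is not an element of $\bS$. Concretely, if the first three columns of $M$ vanish and $\Delta_{45}\neq 0$ (so $M$ has full rank) and $y\neq 0$, then every section in $\bS$ vanishes at $(M,y)\in\bX^{ss}(\bG)$. This base locus is exactly the codimension-$6$ locus of points with $\bmu_3$ isotropy, and it is contained in $Z(s)$ for \emph{every} $s\in V^4$. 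Hence Kleiman--Bertini only yields smoothness of $Z(s)\cap\bX^{ss}(\bG)$ away from this locus, and a separate argument is needed along it. The paper handles this by exhibiting an explicit section $s=(\Delta_{14},\Delta_{24},\Delta_{34},\Delta_{15})$ and checking directly that its Jacobian has full rank at a point of the base locus; openness then propagates this to a dense set of sections.

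Your treatments of regularity (via Lemma~\ref{lem:regular}) and of $\bmu_2$-stabilizers (via the Segre embedding) are correct and close in spirit to the paper's, which uses the same lemma for regularity and a parallel dimension count for the $\bmu_2$ locus. For connectedness, the paper avoids the stacky Lefschetz/ampleness issue you flagged by working instead with the incidence variety $\fX=\{(\bx,s):s(\bx)=0\}\subset\bX^{ss}\times V^{4}$ and applying a Bertini irreducibility theorem to the projection $\fX\to V^{4}$; this gives irreducibility of the generic fiber without needing to verify positivity of the descended line bundle.
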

\begin{proof}
Let $Z(\bS)$ denote the common vanishing locus of all elements of $\bS$. A computation shows that $Z(\bS) \cap \bX^{ss}(\bG)$ is equal to the locus where the first three columns of $M = (m_{ij})$ are zero and $\Delta_{45}(M)\neq =0$ (and $y \neq 0$). 
 This is precisely the locus of points wtih $\bmu_3$ isotropy and it has has codimension 6 in $\bX^{ss}(\bG)$.
Let $V := \Gamma^\bG(\bX, L_\bX) \subset \Gamma(\bX, \OO_\bX)$
and note that $V$ is equal to the span of $\bS$.

Consider the variety
\[
\fX = \{(\bx, s) \in \bX^{ss} \times V^{\oplus 4} \mid s(\bx) = 0\}
\]
together with its projection $p:\fX \to V^{\oplus 4}$. By Bertini's Theorem 
\cite[Thm~3.4.10]{FOV} and \cite[Tag~055A]{tag}\note{bertini gives ONE point where the fiber is irreducible; stacks project shows there must be a neighborhood of this point where the fiber is irreducible} there is an open subset $U_1 \subset V^{\oplus 4}$ consisting of sections $s$ such that the fiber $\fX_s$ is irreducible. 

% The next goal is to find an open subset of $V^{\oplus 4}$ where fibers of $p$ are smooth. 
The map $p$ is equal to a restriction of the composition $\bX^{ss} \times V^{\oplus 4} \to \bX\sslash\bG \times V^{\oplus 4} \to V^{\oplus 4}$ to a closed subvariety of $\bX^{ss}\times V^{\oplus 4}$; since the first map is a $\bG$-torsor and the second map is proper, the map $p$ is closed. 
%\Nawaz{Should this be $\bX$ rather than $\fX$?} \Rachel{i fixed that and a little more; does it look good now?}
Let $\Sigma \subset \fX$ denote the singular locus of $p$. Since $p$ is closed we know that the set of $s \in V^{\oplus 4}$ where $\Sigma_s \cap Z(\bS) \neq \emptyset$ is closed; in fact it has positive codimension, so that there is an open subset $U_2' \subset V^{\oplus 4}$ where $\fX_s$ is smooth along $Z(\bS)$. To show that some $s$ has $\Sigma_s \cap Z(\bS) = \emptyset$ we use
$s=(\Delta_{14}, \Delta_{24}, \Delta_{34}, \Delta_{15}) \in V^{\oplus 4}$. One can check directly that the Jacobian matrix of $s$ has full rank when $m_{14}=m_{25}=1$ and all other coordinates are zero.\note{In fact, I checked directly that the jacobian matrix has full rank when $\Delta_{45} \neq 0$.} \note{Since this point is in the unique orbit of $G$ on $Z(S) \cap X^{ss}(G)$, we conclude that $\Sigma_s \cap Z(\bS)$ is empty.}
Finally, by \cite[Thm~3.4.8]{FOV} we can in fact choose $s \in U_2'$ so that $\Sigma_s$ is empty. Since $p$ is closed we get an open subset $U_2 \subset V^{\oplus 4}$ where fibers of $p$ are smooth. \note{I think the order of these smoothness arguments is important. bertini only gives a DENSE subset where fibers are smooth, not necessarily open. we need some map to be closed in order to get that the dense subset is open. If I remove $Z(S)$ from $\fX$ I don't think $p$ will be closed any more?}

Let $D \subset X$ be the locus of points with $\bmu_2$ isotropy; it has dimension 8.\note{the locus in $M_{2x5}$ with Z/2 isotropy coming from the maximal torus has dimension 5, but then you have to add in G-orbits and since the maximal torus of G stays in the locus you already counted this only adds 2 more dimensions.} Let $Z(\bS)^c$ be the complement of $Z(\bS) \subset X^{ss}(\bG)$. By \cite[1.5.4(1)]{FOV} and \cite[Tag~05F7]{tag}, there is an open subset $U_3 \subset V^{\oplus 4}$ such that if $s \in U_3$, then 
\begin{equation}\label{eq:choose-s}
Z(s) \cap (D \cap Z(\bS)^c )
\end{equation}
has the expected dimension $8-4=4$, or it is empty.\note{effective cartier divisors can be empty I guess.} Since $Z(\bS)$ has $\bmu_3$ isotropy, we can replace $D\cap Z(\bS)^c$ with $D$ in \eqref{eq:choose-s}. Finally, $\bG = GL(2)\times \Gm$ has dimension 5 and the orbits of semistable points also have dimension 5, so for $s \in U_3$ the locus \eqref{eq:choose-s} must be empty.

We have constructed an open subset $U_1 \cap U_2 \cap U_3$ of $V^{\oplus 4}$ where fibers of $p$ are smooth irreducible and have no $\bmu_2$-stabilizers. To finish the proof apply Lemma \ref{lem:regular} (note that this requires us to check that $Z(\bS)$ has codimension at least 4 as a subset of $\bX$).  \note{details:
Let $((m_{ij}), y) \in Z(S')$. If $y \neq 0$, then $(m_{ij})$ is in $Z(S)$ and we know by the proof of Lemma \ref{lem:choose-s-1} that $((m_{ij}, y)$ is in a component of codimension at least $4$. On the other hand, if $y =0$, this gives 1 codimension. The relation $\Delta_{14}(M)=0$ implies either $m_4=0$ or $m_1=am_4$, where $m_i$ is the $i^{th}$ column of $M$. If $m_4=0$ this gives 2 codimensions, and then the relation $\Delta_{15}(M)=0$ gives at least 1 more codimension as required. If $m_1=am_4$ this gives 1 codimension, and the relations $\Delta_{24}(M)=0$ and $\Delta_{34}(M)=0$ each give one more codimension as required.}
\end{proof}

\subsubsection{$I$-effective classes}\label{sec:delpez-ieff} %\Rachel{rewrote this section}
Define sets $C_{\sA} \subset \Hom(\chi(\bT), \QQ)$ as in Section \ref{sec:wgr-effective}. We will show that $\tilde \beta$ maps to an $I^{\bY, \bG, \vartheta_3}$-effective class if and only if $\tilde \beta \in C_{4, 9, 11}$. Hence the image of the $I^{\bY, \bG, \vartheta_3}$-effective classes in $\Hom(\chi(\bG), \QQ) \simeq \QQ^2$ is 
\begin{equation}\label{eq:delpez-i-effective}
\left\{
\begin{array}{c}
(\beta_1, \beta_2) \in (\frac{1}{3}\ZZ \times \ZZ)\\
\beta_2\geq 0,\;\; 3\beta_1 + 2\beta_2 \geq 0
\end{array}
\right\}.
\end{equation}
\note{$r$ may not be injective, as far as I can tell.} 
We use Lemma \ref{lem:i-effective-cigit}, which says that a class $\tilde \beta$ maps to an $I^{\bY, \bG, \vartheta_3}$-effective class if and only if $\bX^{\tilde \beta} \cap \bY^{ss}(\bG)$ is not empty. 

If $\tilde \beta \in C_{4, 9, 11}$, then $\bX^{\tilde \beta}$ contains the locus where the first three columns of $M = (m_{ij})$ are zero.\note{even if some $\tilde \beta_i$ are negative} This is precisely the base locus of the linear system $\bS$, of which $s$ is a member (see the proof of Lemma \ref{lem:choose-s}). So if $\tilde \beta \in C_{4, 9, 11}$ then $\bX^{\tilde \beta} \cap \bY^{ss}(\bG)$ is not empty.
On the other hand suppose $\tilde \beta \not \in C_{4, 9, 11}$. If $\tilde \beta \in C_{\sA}$ has integral coordinates then $\tilde \beta \in C_{4, 9, 11}$, so we may assume $\tilde \beta_1+\tilde \beta_2=1/2$. In this case $\bX^{\tilde \beta}$ is equal to a set of points with $\bmu_2$ isotropy. By Lemma \ref{lem:choose-s}, $\bY^{ss}(\bG) = \bY\cap \bX^{ss}(\bG)$ does not contain any points with $\bmu_2$ stabilizer, so $\bX^{\tilde \beta} \cap \bY^{ss}(\bG)$ is empty in this case.

\subsubsection{$I$-function}
%\Rachel{rewrote this section AGAIN}
% Define $
% r:= r^{\bY, \bG}_{\bX, \bG}$.
% Theorem \ref{thm:Ifunc-formula} gives us a formula for an $I$-function of $(\bY, \bG, \vartheta_3)$ with the Novikov variable $q^{\delta}$ replaced by $q^{r(\delta)}$.
% % , where $r$ is the map in Section \ref{sec:delpez-ieff}. 
% We will write $I(r(q), z)$ for the pullback of this $I$-function to $\In{Y \sslash G}$, and we will write $\varphi$ for the map $Y\sslash_G T \to Y\sslash G$. In this section we write down the series $\varphi^*I(r(q), z)$. Before we can do that, there are several small issues to address.

A class $\tilde \beta \in C_{4, 9, 11}$ fails to be $I$-nonnegative if and only if
\begin{equation}\label{eq:not-convex}
2\tilde \beta_1 + 2\tilde \beta_2 + \tilde \beta_3 \in \ZZ_{<0}.
\end{equation}
In this case, the inequalities defining $C$ imply that both $\tilde \beta_1$ and $\tilde \beta_2$ are integral and strictly negative, and hence the substack $F^0_{\tilde \beta}(\bX\sslash \bT)$ of the untwisted sector is equal to the (zero-dimensional) locus with $\bmu_3$-isotropy. This locus is contained in the base locus of the linear system $\bS$ of which $s$ is a member, and hence we have $F^0_{\tilde \beta}(\bY\sslash \bT) = F^0_{\tilde \beta}(\bX\sslash \bT)$. Moreover if \eqref{eq:not-convex} holds then the number of weights $\epsilon_i$ of $E$ such that $\epsilon_i(\tilde \beta) \in \ZZ_{\geq 0}$ is zero, so Theorem \ref{thm:Ifunc-formula} gives an explicit formula for $I_{\tilde \beta}(z)$.

Let $C:= C_{4, 9, 11}$, $R := r^{\bY, \bG}_{\bX, \bG}$, and $\varphi: Y\sslash_G T \to Y\sslash G$. The roots $\rho_1, \rho_2$ of $\bG$ with respect to $\bT$ are $\pm (e_1 - e_2)$. Since $\tilde \beta(\rho_i) \in \ZZ$ for $\tilde \beta \in C$, we may make the replacement
\[
 \prod_{i=1}^2 C(\tilde \beta, \rho_i)^{-1} = (-1)^{\tilde \beta(\rho)} \frac{c_1(\cL_{\rho}) + \tilde \beta(\rho)z}{c_1(\cL_{\rho})}
\]
in our $I$-function formulas.
We obtain
\begin{equation}\label{eq:delpez-ifunc-shape}\varphi^*I(r(q), z) = \sum_{\tilde \beta \in C} q_1^{\tilde \beta_1+\tilde \beta_2}q_2^{\tilde \beta_3}I_{\tilde \beta}(z)\end{equation}
where if $\tilde \beta$ is $I$-nonnegative (i.e. $2 \tilde \beta_1 + 2\tilde \beta_2 + \tilde \beta_3 \not \in \ZZ_{<0}$), we have
\begin{align} 
\label{eq:delpez-ifunc-convex}
I_{\tilde \beta}(z)=  & \frac{1}{\tilde\beta_3!z^{\tilde\beta_3}}   (-1)^{\tilde\beta_1-\tilde\beta_2} \frac{H_1-H_2 +(\tilde\beta_1-\tilde\beta_2)z}{H_1-H_2}  \\
&C(\tilde \beta, e_1)^3\;C(\tilde \beta, e_2)^3\; C(\tilde \beta, 2e_1+e_2)^2\; C(\tilde \beta, e_1+2e_2)^2\; C(\tilde \beta, 2e_1+2e_2)^{-4} \;\one_{g_{\tilde \beta}^{-1}}. \nonumber
\end{align}
If $\tilde \beta$ is not $I$-nonnegative (i.e. $2 \tilde \beta_1 + 2\tilde \beta_2 + \tilde \beta_3 \in \ZZ_{<0}$) we have 
%\Rachel{Nawaz, please verify that I have the right formula!}
\begin{align} 
\label{eq:delpez-ifunc-nonconvex}
I_{\tilde \beta}(z)=  & \frac{1}{\tilde\beta_3!z^{\tilde\beta_3}}   (-1)^{\tilde\beta_1-\tilde\beta_2} \frac{H_1-H_2 +(\tilde\beta_1-\tilde\beta_2)z}{H_1-H_2}  
  \\
&C^\circ(\tilde \beta, e_1)^3\; C^\circ(\tilde \beta, e_2)^3\; C(\tilde \beta, e_1+2e_2)^2\; C(\tilde \beta, 2e_1+e_2)^2\; C^\circ(\tilde \beta, 2e_1+2e_2)^{-4}\;[B\bmu_3]_1 \nonumber
\end{align}
where $[B\bmu_3]_1$ is the fundamental class of the locus in the untwisted sector of $Y\sslash G$ with $\bmu_3$ isotropy (note that this locus is isomorphic to $B\bmu_3$). In both formulas, the factors with $e_1+2e_2$ and $e_2+2e_1$ will always appear as denominators.
% \Rachel{Nawaz, can you suggest better notation???}\Nawaz{I would remove the $one$?} 

Let $\one_{1/3} \in H^\bullet(\In{Y\sslash G}; \QQ)$ be the fundamental class of the sector corresponding to $(\zeta_3, \zeta_3) \in T'$. 
\note{(There is another twisted sector, corresponding to $(\zeta_3^2, \zeta_3^2) \in T'$, but we will not need notation for its fundamental class.)} 
In Lemma \ref{lem:delpez-asymptotics} below, we show that
\begin{equation}\label{eq:delpez-asymptotics}
I(r(q), z) = \one + z^{-1}(8q_1 + q_2)\one_{} + z^{-1}(q_1^{-2/3}q_2 + 3q_1^{1/3}) \one_{1/3} + O(z^{-2})
\end{equation}
It is clear from the definition of $\mu$ that its formation commutes with the restriction $r$. Hence, if we define \note{ $c_2=3$, $c_1=8$ }
\begin{equation}\label{eq:mu}
\mu(q_1, q_2) = (8q_1+q_1)\one_{} + (q_1^{-2/3}q_2 + 3q_1^{1/3}) \one_{1/3}
\end{equation}
by applying $r$ to both sides of \eqref{eq:mirror-thm} we have that\note{we have $\mu(q, z) = \mu(q) = (c_1q_1+q_1)\one_{id} + (q_1^{-2/3}q_2 + 3q_1^{1/3}) \one_{(\zeta_3, \zeta_3)}$}
\begin{equation}\label{eq:delpez-mirror}
J(\mu(q_1, q_2), r\circ\iota(Q), z) = I(r(q), z) .
\end{equation}

\begin{lemma}\label{lem:delpez-asymptotics}
The equality \eqref{eq:delpez-asymptotics} holds.
\end{lemma}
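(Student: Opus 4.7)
The plan is to mirror Lemma \ref{lem:bs3-asymptotics}: use the homogeneity of $I$ to identify the finite set of $\tilde\beta \in C_{\{4,9,11\}}$ that can contribute to the coefficient of $z^{-1}$ in $I(r(q),z)$, compute each summand via \eqref{eq:delpez-ifunc-convex}, and use Weyl symmetry to cancel the $(H_1-H_2)^{-1}$ poles that appear in individual terms.

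First I will apply Lemma \ref{lem:bound-degree}. Summing the weight matrix of $\bX$ from Section \ref{sec:wgr-extension} row by row gives $\bxi=(9,9,5)$, and the four copies of $L$ of weight $(2,2,1)$ give $\pmb{\epsilon}=(8,8,4)$, so $\bxi-\pmb{\epsilon}=(1,1,1)$ and $\deg(q_1^{\beta_1}q_2^{\beta_2})=\beta_1+\beta_2$ for every $\beta \in \KK^{\geq 0}(\bY,\bG,\vartheta_3)$. A tangent-space computation using the Euler sequence on $\bX\sslash\bG$ and the normal bundle $E=L^{\oplus 4}$ shows that the two eigenvalues of $(\zeta_3,\zeta_3,1)$ on $T_{Y\sslash G}$ at a fixed point are both $\zeta_3$, giving $\age(\one_{1/3})=2/3$. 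By homogeneity, the only sectors that can contribute to the coefficient of $z^{-1}$ are the untwisted sector ($\beta_1+\beta_2=1$) and $\one_{1/3}$ ($\beta_1+\beta_2=1/3$); the other $\bmu_3$-sector has age $4/3$ and admits no $I^{\bY,\bG,\vartheta_3}$-effective class of the required degree. A short case analysis using the inequalities defining $C_{\{4,9,11\}}$ then produces finitely many relevant $\tilde\beta$, organized into Weyl orbits: for the untwisted sector, the pairs $\{(1,0,0),(0,1,0)\}$, $\{(-1,2,0),(2,-1,0)\}$, $\{(-1,0,2),(0,-1,2)\}$ and the singleton $\{(-1,-1,3)\}$ (giving $q_1,q_1,q_1^{-1}q_2^2,q_1^{-2}q_2^3$ respectively), together with $\{(0,0,1)\}$ and $\{(-1,1,1),(1,-1,1)\}$ (giving $q_2$); for $\one_{1/3}$, the pair $\{(2/3,-1/3,0),(-1/3,2/3,0)\}$ and the Weyl-fixed $\{(-1/3,-1/3,1)\}$ (giving $q_1^{1/3}$ and $q_1^{-2/3}q_2$ respectively). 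Only $(-1,-1,3)$ fails to be $I$-nonnegative, and \eqref{eq:delpez-ifunc-nonconvex} applied to it yields a summand of order $O(z^{-3})$.

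Finally I will compute each $I_{\tilde\beta}(z)$ to first nontrivial order in $z^{-1}$, using $c_1(\cL_{e_3})|_{X\sslash_G T}=0$ (which follows from Lemma \ref{lem:stack-iso} applied to $f$). In each Weyl-paired orbit, the factor $(H_1-H_2+(\tilde\beta_1-\tilde\beta_2)z)/(H_1-H_2)$ contributes a $(H_1-H_2)^{-1}$ pole in the individual summand that cancels after symmetrization, as in Lemma \ref{lem:bs3-asymptotics} and \cite[Lem~5.4.1]{nonab1}. A careful expansion then yields $I_{(1,0,0)}+I_{(0,1,0)}=8z^{-1}\one+O(z^{-2})$ and $I_{(2/3,-1/3,0)}+I_{(-1/3,2/3,0)}=3z^{-1}\one_{1/3}+O(z^{-2})$; the remaining Weyl pairs $\{(-1,2,0),(2,-1,0)\}$, $\{(-1,1,1),(1,-1,1)\}$, $\{(-1,0,2),(0,-1,2)\}$ are $O(z^{-3})$ by a direct degree count on numerator and denominator after symmetrization; and the Weyl-invariant summands $I_{(0,0,1)}=z^{-1}\one+O(z^{-2})$ and $I_{(-1/3,-1/3,1)}=z^{-1}\one_{1/3}$ are essentially immediate from \eqref{eq:delpez-ifunc-convex}. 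Summing the surviving contributions yields \eqref{eq:delpez-asymptotics}. The main obstacle will be the bookkeeping for the two nontrivial Weyl cancellations, where one must expand both the numerator and the denominator through two orders in $1/z$ before the antisymmetric $(H_1-H_2)^{-1}$ poles cancel and the finite nonzero coefficient of $z^{-1}$ emerges.
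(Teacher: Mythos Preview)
Your proposal is correct and follows essentially the same approach as the paper: reduce via homogeneity and the age computation to the same finite list of $\tilde\beta\in C_{\{4,9,11\}}$, discard the summands with a negative coordinate (you phrase this as $O(z^{-3})$ from the $H_i^3$ factor, the paper as ``degree $\geq 1$ in the $H_i$'', which are equivalent statements via homogeneity), handle $(-1,-1,3)$ via \eqref{eq:delpez-ifunc-nonconvex}, and compute the four surviving Weyl orbits directly. The only cosmetic difference is that the paper argues the nonconvex term $(-1,-1,3)$ vanishes because $[B\bmu_3]_1$ has positive codimension rather than by a $z$-order count, but again these are the same observation.
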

\begin{proof}
The set $C$ is additively generated over $\ZZ_{\geq 0}$ by $\frac{1}{3}(2, -1, 0), \frac{1}{3}(-1, 2, 0),$ and $\frac{1}{3}(-1, -1, 3)$. To bound $\deg(q^\delta)$ in the $I$-function of $(\bX, \bG, \vartheta_3, E, s)$ we use Lemma \ref{lem:bound-degree}: since $\pmb \xi - \pmb \epsilon = (1, 1, 1)$ we see that  $\deg(q^\delta) \geq 1/3$ for nontrivial $\delta$. This implies that $I(r(q), z) = \one + \OO(z^{-1})$.

Define $\deg(q^{r(\delta)}):= \deg(q^{\delta}) $, using Lemma \ref{lem:bound-degree} to see that this is well-defined. By the homogeneity of $I^{\bY, \bG, \vartheta_3}(q, z)$ discussed in Section \ref{sec:implications}, a term of $I(r(q), z)$ with $z^{-1}$ must also contain $q^\beta$ and $\alpha \in H^k(\In{\bY\sslash \bG}_{(g_{\tilde \beta}^{-1})}; \QQ)$ satisfying $\deg(q^\beta) + k + \age_{\bY\sslash \bG}(g_{\tilde \beta}^{-1}) =  1$.\note{Note that by Lemma \ref{lem:stack-iso}, there is an equality $\age_{Y\sslash G}(g_{\tilde \beta}^{-1}) = \age_{\bY\sslash \bG}((g_{\tilde \beta}^{-1}, 1)).$} We compute the ages of the elements of $T'\times \Gm$ that represent conjugacy classes in $\Stab_\bG(\bY^{ss}(\bG))$ (see \cite[Example~3.7]{OP}):
%this can be done using exact sequences as in the proof of Lemma \ref{lem:bound-degree}): \Rachel{no it can't---or i did it wrong before---fix this!!}
\[
\age_{\bY\sslash \bG}(1,1,1) = 0 \quad \quad \age_{\bY\sslash \bG}(\zeta_3, \zeta_3, 1) =  2/3\quad \quad \age_{\bY \sslash \bG}(\zeta_3^2, \zeta_3^2, 1) = 4/3.
\]
Hence, the only possibilities are $\deg(q^\beta)=1$, $g_{\tilde \beta}^{-1} = (1, 1, 1)$, and $k=0$; or $\deg(q^\beta) = 1/3$, $g_{\tilde \beta}^{-1} = (\zeta_3, \zeta_3, 1)$, and $k=0$.

% Let $I_{\tilde \beta}(r(q), z)$ denote the summand of \eqref{eq:delpez-ifunc} corresponding to $\tilde \beta \in C$. 
The classes $\tilde\beta \in C$ mapping to $\beta$ satisfing $\deg(q^{\beta})=1$ are\note{each generator of $C$ has weight $1/3$ wrt $\xi-\epsilon=(1,1,1)$, so choose exactly three of these to add up (counted with multiplicity)}
\begin{align*}
(1,0,0) && (0,1,0) && (0,0,1) && (1,-1,1) && (-1,1,1)\\
(2,-1,0) && (-1,2,0) && (-1,-1,3) && (-1,0,2) && (0,-1,2).
\end{align*}
Of these, only the class $(-1, -1, 3)$ is not $I$-nonnegative.
We see from \eqref{eq:delpez-ifunc-nonconvex} that the summand $I_{\tilde \beta}(z)$ is equal to the cup product of some cohomology class with $[B\bmu_3]_1$ in the untwisted sector, which has dimension 2. Hence the codimension $k$ of the result is not zero, and this term does not contribute to the $z^{-1}$ term.

Of the remaining classes, one can check from \eqref{eq:delpez-ifunc-convex} that if $\tilde \beta$ has any negative coordinate, then every summand
$I_{\tilde \beta}( z)$ has degree at least 1 in the $H_i$.
% every term of the form $p(H_1, H_2)q^{\tilde \beta}1_{g_{\tilde \beta}}$ (for some polynomial $p(H_1, H_2)$ in the classes $H_i$) in \eqref{eq:delpez-ifunc} has the property that the constant term of $p(H_1, H_2)$ is zero. 
This happens because if $\tilde\beta(\xi_i)$ is negative, then $I_{\tilde \beta}( z)$ has a factor of $c_1(\cL_{\xi_i})$.  \note{reason: each of these has negative inner product with at least two columns of the charge matrix. one column produces one H in the numerator. one $H$ could cancel with the abelianization factor. subtelty: $(-1,-1,3)$ produces more denominator $H's$ in the ql factor. but I think there are enough numerator $H$'s to cancel.} Hence these summands also do not contribute to the $z^{-1}$ term. 

By direct computation as Lemma \ref{lem:bs3-asymptotics}\note{or Section \ref{sec:period}}, we find that the coefficient of $z^{-1}q^{r(0,0,1)} = z^{-1}q_2$ in \eqref{eq:delpez-ifunc-convex} is $\one$ and the coefficient of $z^{-1}q^{r(1,0,0)} = z^{-1}q_1$ is $8$ times $\one$.

The classes $\tilde \beta \in C$ satisfying $\deg(q^{r(\tilde \beta)})=1/3$ are 
\[
(-1/3, -1/3, 1) \quad \quad (2/3, -1/3, 0) \quad \quad (-1/3, 2/3, 0).
\]
These classes are all $I$-nonnegative. Direct computation shows that the coefficient of $z^{-1}q^{r(-1/3, -1/3, 1)} = z^{-1}q_1^{-2/3}q_2$ is $\one_{1/3}$ and the coefficient of $z^{-1}q^{r(2/3, -1/3, 0)} = z^{-1}q_1^{1/3}$ is $3\one_{1/3}$.

\end{proof}

\subsubsection{Quantum period}\label{sec:period}
We compute the quantum period of $Y\sslash G$, following the definition in \cite{many}. 
% \begin{lemma}\label{lem:iota-inj}
% The homomorphism $\iota: \Hom( \Pic(Y\sslash G) , \QQ) \to \Hom(\Pic([\bY/\bG]), \QQ)$ is injective on $\Eff(Y\sslash G)$.
% \end{lemma}
% \begin{proof}
% There is a commuting diagram
% \[
% \begin{tikzcd}
% Y\sslash G \arrow[d] \arrow[r] & {[Y/G]} \arrow[d] \arrow[r] & {[\bY/\bG]} \arrow[d]\\
% X\sslash G \arrow[r] & {[X/G]} \arrow[r] & {[\bX/\bG]}
% \end{tikzcd}
% \]
% that induces an analogous diagram by applying the covariant functor $F(-) := \Hom(\Pic(-), \QQ)$. The horizontal arrows on the right have retractions by Lemma \ref{lem:stack-iso}, hence applying $F$ to these maps produces injections. Applying $F$ to the rightmost arrow is an injection on effective classes by Section \ref{sec:delpez-ieff}. Applying $F$ to the bottom left horizontal arrow produces an injection by Remark \ref{rmk:extend-pic}. Finally, \todo{left vertical arrow---note OP sec 3.3}
% \end{proof}
The quantum period for $Y\sslash G$ is\note{a priori we're using $zJ(\gamma, q, z)$, but then we set $z=1$ so the leading $z$ disappears.}\note{problem: the formula for the quantum period says to use the anticanonical divisor of the coarse space. I asked Andrea. He says `` there is no difference between the canonical divisor of the orbifold and the canonical divisor of the coarse space. This is because our orbifolds (in the definition of the Fano orbifolds) are "well-formed", i.e. the stacky locus has codimension at least 2. In other words there is no stackiness on divisors.''} 
% \Nawaz{I don't think our formulation is wrong, but I'm a little confused at the moment. It depends on what OP means by $\beta(K_X$, and likely tracing their definitions gives the same as what we have}
\[
G(x,t) := [J\left(\gamma(t, x), t^{-K_{Y\sslash G}}, 1\right)]_{\one_{}} \quad \quad \text{where} \quad \quad \gamma(t, x):= t^{1/3}x\one_{1/3}.
\]
Here, $[\cdot]_{\one_{}}$ denotes the coefficient of the class $\one_{}$, and by writing $t^{-K_{Y\sslash G}}$ as an argument we mean that the Novikov variable $Q^B$ is replaced by $t^{B(-K_{Y\sslash G})},$ where $-K_{Y\sslash G}$ is the anti-canonical bundle on $Y\sslash G$.\note{the anticanonical bundle of the orbifold and coarse moduli ``agree'' becaue the stacky locus has codimension 2, so divisors ``are not stacky''}

To compute $G(x, t)$, we set $q_1 = t$ and $q_2 = t(x-3)$ in \eqref{eq:delpez-mirror}. If we write $I(q_1, q_2, z) := I(r(q), z)$ then the right side of \eqref{eq:delpez-mirror} becomes $I(t, t(x-3), z)$. On the left side, we compute
\[
\mu(t, t(x-3)) = t(x+5)\one_{} + t^{1/3}x\one_{1/3} = t(x+5)\one_{} + \gamma(t, x).
\]
To compute the restriction of $r \circ \iota(Q)$, note that if $r \circ \iota(Q) = q_1^{\beta_1}q_2^{\beta_2}$ then $\beta_2=0$, since the inclusion $X\sslash G \to [\bX/\bG]$ is induced by the group homomorphism $G \to \bG = G \times \Gm$ given by the identity homomorphism in the first factor and the trivial homomorphism in the second factor. So the restriction of $r \circ \iota(Q^B)$ is equal to $t^{\beta_2} = t^{\tilde \beta_1 + \tilde \beta_2}$, where $r\circ \iota(B)$ is the image of $(\tilde \beta_1, \tilde \beta_2) \in \QQ^2 \simeq \Hom(\chi(T'), \QQ)$. On the other hand we have that $B(-K_{Y\sslash G}) = \iota(B)(-K_{Y\sslash G})$, and by \eqref{eq:thing0} and \eqref{eq:thing1} this equals $\tilde \beta_1 + \tilde \beta_2$. So the restriction of $r \circ \iota(Q)$ is precisely $t^{-K_{Y\sslash G}}$.
We have shown that setting set $q_1 = t$ and $q_2 = t(x-3)$ and $z=1$ in \eqref{eq:delpez-mirror} yields the equation
 \note{I don't have a clear way to explain this, but: a priori when we restrict Yang's thm along$q_1 = t$ and $q_2 = t(x-3)$ and $z=1$ we obtain a formula for $J(\mu_{rest}, r\circ \iota(Q)_{rest}, 1)$, where $r\circ \iota(Q)_{rest}$ means that we write $r\circ \iota(Q)_{rest}$ as $q_1^{d_1}q_2^{d_2}$ and then replace $q_1$ and $q_2$. I think an important point is that $\iota(Q)$ will always be 0 in the $q_2$ coordinate because the inclusion $X\sslash G \to [\bX/\bG]$ is the trivial homomorphism on the second factor of $G \to \bG$. So $r\circ \iota(Q)_{rest}$ is just equal to $q_1^{d_1}$, and $d_1$ happens to equal the pairing of the degree with the anticanonical character? }
\[
[J(t(x+5)\one_{} + \gamma(t, x), t^{-K_{Y\sslash G}}, 1)]_{\one_{}} = [I(t, t(x-3), 1)]_{\one_{}}
\]
and hence by the string equation
\[
G(x, t) = e^{-t(x+5)}[I(t, t(x-3), 1)]_{\one_{}}.
\]

We use \eqref{eq:delpez-ifunc-convex} and \eqref{eq:delpez-ifunc-nonconvex} to get an explicit formula for $G(x, t)$. 
From \eqref{eq:delpez-ifunc-nonconvex} we see that only $I$-nonnegative classes contribute to the coefficient of $\one$.
From the formula for $g_{\tilde \beta}$, an $I$-nonnegative class contributes to the coefficient of $\one_{}$ only if $\tilde \beta \in C$ and $\tilde \beta_1, \tilde \beta_2 \in \ZZ_{\geq 0}$ (see the proof of Lemma \ref{lem:delpez-asymptotics}).
Let
\[
A_{\tilde \beta}(q_1, q_2, z) := \frac{q_1^{\tilde \beta_1+\tilde \beta_2}q_2^{\tilde \beta_3}(-1)^{\tilde \beta_1-\tilde \beta_2}(2\tilde \beta_1+2\tilde \beta_2+\tilde \beta_3)!^4}{\tilde \beta_3!\tilde \beta_1!^3\tilde \beta_2!^3(2\tilde \beta_1+\tilde \beta_2 + \tilde \beta_3)!^2  (\tilde \beta_1 + 2\tilde \beta_2 + \tilde \beta_3)!^2z^{\tilde \beta_1 + \tilde \beta_2 + \tilde \beta_3}} 
\]
Then by \eqref{eq:delpez-ifunc-convex}, we have that the $(\tilde \beta_1, \tilde \beta_2, \tilde \beta_3)$-summand of \eqref{eq:delpez-ifunc-shape}
% $q_1^{\tilde \beta_1 + \tilde \beta_2}q_2^{\tilde \beta_3}I_{(\tilde \beta_1, \tilde \beta_2, \tilde \beta_3)}(z)$ 
is equal to
\begin{align*}
A_{\tilde \beta}&\left(1 + \frac{(\tilde \beta_1 - \tilde \beta_2)z}{H_1-H_2} \right)\left(
-\sum_{k=1}^{\tilde \beta_1} \frac{3H_1}{kz}
-\sum_{k=1}^{\tilde \beta_2} \frac{3H_2}{kz}
-\sum_{k=1}^{2\tilde \beta_1+\tilde \beta_2+\tilde \beta_3} \frac{2(2H_1+H_2)}{kz}\right.\\
 &-\sum_{k=1}^{\tilde \beta_1+2\tilde \beta_2+\tilde \beta_3} \frac{2(H_1+2H_2)}{kz}
+ \left.\sum_{k=1}^{2\tilde \beta_1+ 2\tilde \beta_2 + \tilde \beta_3}\frac{4(2H_1+2H_2)}{kz}\right) + O(H_i)
\end{align*}
Define
\[ B_\ell := \sum_{k=1}^\ell \frac{1}{k}. \]
When we add 
% $q_1^{\tilde \beta_1 + \tilde \beta_2}q_2^{\tilde \beta_3}I_{(\tilde \beta_1, \tilde \beta_2, \tilde \beta_3)}(z) + q_1^{\tilde \beta_2 + \tilde \beta_1}q_2^{\tilde \beta_3}I_{(\tilde \beta_2, \tilde \beta_1, \tilde \beta_3)}(z)$
the $(\tilde \beta_1, \tilde \beta_2, \tilde \beta_3)$ and $(\tilde \beta_2, \tilde \beta_1, \tilde \beta_3)$ summands of \eqref{eq:delpez-ifunc-shape} 
and set $q_1=t$, $q_2=t(x-3)$ and $z=1$, we get 
\[
A_{\tilde \beta}(t, t(x-3), 1)\left(2 + (\tilde \beta_1-\tilde \beta_2)(-3B_{\tilde \beta_1}+3B_{\tilde \beta_2} - 2B_{2\tilde \beta_1 + \tilde \beta_2 + \tilde \beta_3} + 2B_{\tilde \beta_1 + 2\tilde \beta_2 + \tilde \beta_3} )\right) + O(H_i).
\]
Therefore, $G(x, t)$ is equal to
\[
 e^{-t(x+5)}\sum_{\tilde \beta_i \in \ZZ_{\geq 0}} A_{\tilde \beta}(t, t(x-3), 1)\left(1 + \frac{\tilde \beta_1-\tilde \beta_2}{2}(-3B_{\tilde \beta_1}+3B_{\tilde \beta_2} - 2B_{2\tilde \beta_1 + \tilde \beta_2 + \tilde \beta_3} + 2B_{\tilde \beta_1 + 2\tilde \beta_2 + \tilde \beta_3} )\right).
\]
Observe that if we set $x=3$, we recover the specialization at the end of \cite[Section~6.2]{OP}.

The regularization of $G(x, t)$ is conjecturally equal to the classical period of the Laurent polynomial
\[
ay + \frac{x}{y^2}(1+y)^3 + \frac{1}{xy^2}(1+y)^4 + \frac{7}{y} + \frac{2}{y^2}\]
(see \cite[Conj. B]{many} and \cite{OP}). As computed by the Fanosearch code package written by Coates and Kasprzyk \cite{andrea-email}, the first four terms of said regularization are
\[
1 + (14x + 70)t^2 + (6x^2 + 210x + 966)t^3 + (546x^2 + 6888x + 22470)t^4 + \dots  ,
\]
and one can check that these terms agree with the first four terms of the classical period.

\appendix

\section{Comparing $J$ and $I$ functions}\label{sec:compare}
\subsection{A proof of \eqref{eq:mirror-thm}}\label{sec:pf-of-mirror}
In this section we prove that \eqref{eq:mirror-thm} holds with the definitions of $J^{\cX}$ and $I^{X, G, \theta}$ given in this paper. Let $\br$ be the locally constant function on $\In{\cX}$ that is equal to the order of the generic stabilizer on that component.

Let $\widetilde{J}^{\cX}$, $\widetilde{I}^{X, G, \theta}$, and $\widetilde{\mu}$ denote the analogous objects in \cite{yang}. By definition, we have
\begin{align*}
\widetilde{J}^{\cX}(\bt(z), Q, z) = \one &+ \frac{\bt(-z)}{z} \\
&+ \sum_{\substack{n \geq 0\\B \in \Eff(\cX)}}\frac{Q^B}{n!}  \inv^* \br^2 ev_{1*}\left[\left( \frac{1}{z(z-\psi_1)} \cup 
\prod_{i=2}^{n+1} ev_i^*(\bt(\psi_i))
% ev_2^*(\bt(\psi_2)) \cup \ldots \cup ev_{n+1}^*(\bt(\psi_{n+1})) 
\right) \cap [\overline{\cM}]^\vir\right].
\end{align*}
On the other hand, by \cite[Rmk~3.4.5]{Webb21} we have
\[
\br I^{X, G, \theta}(q, z) = \widetilde{I}^{X, G, \theta}(q, z) \quad \quad \quad \br \mu(q, z) = \widetilde{\mu}(q, z).
\]
By \cite[Thm~1.12.2]{yang} we have
\[
\widetilde{J}^{\cX}(\widetilde{\mu}(q, -z), \iota(Q), z) = \widetilde{I}^{X, G, \theta}(q, z).
\]
Multiplying both sides by $\br^{-1}$ and making the substitution $ \widetilde{\mu}(q, z) = \br \mu(q, z) $ we obtain
\[
\br^{-1} \widetilde{J}^{\cX}(\br{\mu}(q, -z), \iota(Q), z) = {I}^{X, G, \theta}(q, z).
\]
It is straightforward to check that $\br^{-1} \widetilde{J}^\cX(\br\bt(z), Q, z) = J^{\cX}(\bt(z), Q, z)$.

\subsection{Gromov-Witten invariants of noncompact GIT quotients}\label{sec:noncompact}
Throughout this section we work with an algebraic torus $\TT$, letting $R_\TT = H^\bullet_{\TT}(\Spec(\CC); \QQ)$ and letting $S_\TT$ be the field of fractions of $R_{\TT}$. 
\subsubsection{Integration on noncompact stacks via virtual localization}
% When $\cX$ is proper, the Gromov-Witten theory of $\cX$ is defined in \cite{Tseng10} and \cite{AGV08}. Virtual localization can be used as in \cite{Liu} to extend this theory to targets $\cX$ that are not proper but have an algebraic torus action with proper fixed locus (see \cite[Sec~2]{CCIT15}).
% On the other hand, for targets $\cX$ of the form $X\sslash G$ like we consider here, one can directly define a nonequivariant theory even when $\cX$ is not compact. This approach was explained to us by Yang Zhou \cite{zhou-email}. We explain the second approach here and show that the two approaches are equivalent in Appendix \ref{sec:noncompact}. \todo{update this sentence}

The following technique is used in \cite{Liu} to define Gromov-Witten invariants of noncompact targets. Let $\cX$ be a Deligne-Mumford stack equipped with a $\TT$-action and a $\TT$-equivariant perfect obstruction theory, such that the fixed locus $\cX^\TT$ is proper and the virtual localization formula of \cite{GP99} holds.\footnote{By \cite[Thm~3.5]{CKL17}, the virtual localization formula holds if the virtual normal bundle to $\cY^{\TT}$ has a global resolution by a 2-term complex of locally free sheaves.} In this situation, for $\gamma \in H^\bullet_{\TT}(\cX; \QQ)$ one defines
\begin{equation}\label{eq:integration}
\int_{\cX}^\TT \gamma := \sum_j p_{j*} \frac{i_j^* \gamma \cap [F_j]^{\vir}}{e(N_{F_j})} \in S_{\TT},
\end{equation}
where $i_j: F_j \to \cX$ are the inclusions of the connected components of the fixed locus, $p_j$ is the projection $F_j \to \Spec(\CC)$, and all classes and morphisms on the right hand side of \eqref{eq:integration} are to be interpreted $\TT$-equivariantly. In \eqref{eq:integration} the pushforward happens in (equivariant) Borel-Moore homology, and then we apply Poincar\'e duality to get an element of $S_\TT$.\note{I'm being sloppy here because I don't know about equivariant BM homology. There is a paper by Brion, "Poincare duality and equivariant (co)homology." The big results in that paper only apply to compact spaces. But he says you can define equivariant BM homology as the corresponding theory of the classifying space. I think that lets you make sense of an equivariant cycle map, so $[F]^{\vir}$ is in $H^{\BM, \TT}_\bullet$, and pushforwards. Finally we need equivariant Poincar\'e duality for $\Spec(\CC)$. I'm just guessing that this must be true.}

% Here, $[F_j]^\vir$ is viewed as an element of Borel-Moore homology (equal to the usual homology group since $F_j$ is compact),

\begin{example}
If $\cX$ is smooth (and not necessarily compact), then it has a natural $\TT$-equivariant perfect obstruction theory whose induced virtual class is Poincar\'e dual to 1. The components of the fixed locus $F_j$ are also smooth, and \eqref{eq:integration} becomes 
\[
\int_{\cX}^\TT \gamma := \sum_j p_{j*} \frac{i_j^* \gamma}{e(N_{F_j})} \in S_{\TT}.
\]
\end{example}

We will use the following lemma later.

\begin{lemma}\label{lem:push-twice}
Let $\cX$ and $\cY$ be Deligne-Mumford stacks equipped with $\TT$-actions such that $\cX^\TT$ and $\cY^\TT$ are proper. Assume that $\cY$ is smooth and that $\cX$ has a $\TT$-equivariant perfect obstruction theory such that virtual localization holds. If $f: \cX \to \cY$ is a proper morphism, then
\[
\int_{\cX}^\TT \gamma = \int_\cY^\TT f_*(\gamma \cap [X]^{\vir})
\]
where $f_*$ and $[X]^{\vir}$ are defined equivariantly.
\end{lemma}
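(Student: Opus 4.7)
The plan is to compute both sides via virtual localization and then cancel terms. Let $\{F_j\}$ and $\{G_k\}$ denote the connected components of $\cX^\TT$ and $\cY^\TT$, with inclusions $i_j: F_j \to \cX$ and $l_k: G_k \to \cY$, and structure morphisms $p_j: F_j \to \Spec(\CC)$, $q_k: G_k \to \Spec(\CC)$. Since $f$ is $\TT$-equivariant and each $F_j$ is connected, its image lies in a unique component $G_{k(j)}$, so $f \circ i_j$ factors as $l_{k(j)} \circ f_j$ for some $f_j: F_j \to G_{k(j)}$, and each $f_j$ is proper.

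First I would apply the virtual localization formula to $[\cX]^{\vir}$, which gives
\[
\gamma \cap [\cX]^{\vir} = \sum_j (i_j)_*\frac{i_j^*\gamma \cap [F_j]^{\vir}}{e(N_{F_j})}
\]
in the appropriate localized equivariant Borel-Moore homology, using the projection formula. Pushing forward by $f$ and using the factorization of $f \circ i_j$ then yields
\[
f_*(\gamma \cap [\cX]^{\vir}) = \sum_j (l_{k(j)})_* (f_j)_*\frac{i_j^*\gamma \cap [F_j]^{\vir}}{e(N_{F_j})}.
\]

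Next I would apply $l_k^*$. Since $\cY$ is smooth, each inclusion $l_k: G_k \hookrightarrow \cY$ is a regular closed immersion, so the self-intersection formula gives $l_k^* (l_k)_*\alpha = e(N_{G_k}) \cdot \alpha$, while $l_k^*(l_{k'})_* = 0$ for $k \neq k'$ because distinct components of the fixed locus are disjoint. Hence
\[
l_k^*f_*(\gamma \cap [\cX]^{\vir}) = e(N_{G_k}) \sum_{j: k(j)=k} (f_j)_*\frac{i_j^*\gamma \cap [F_j]^{\vir}}{e(N_{F_j})}.
\]
Dividing by $e(N_{G_k})$, applying $q_{k*}$, summing over $k$, and using $q_k \circ f_j = p_j$ for $k = k(j)$ produces exactly the defining expression \eqref{eq:integration} for $\int_\cX^\TT \gamma$.

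The only real subtlety is justifying the virtual localization decomposition of $[\cX]^{\vir}$, the self-intersection formula on the smooth Deligne-Mumford stack $\cY$, and base-change/projection-formula identities in the $\TT$-equivariant setting for DM stacks; these are standard once virtual localization is assumed to hold on $\cX$, and the needed base change is a consequence of properness of $f$ together with the factorization $f \circ i_j = l_{k(j)} \circ f_j$, which is what makes the pushforward decomposition legitimate without invoking a Cartesian diagram directly.
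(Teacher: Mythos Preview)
Your proposal is correct and follows essentially the same approach as the paper: both arguments apply virtual localization on $\cX$ to decompose $\gamma\cap[\cX]^{\vir}$, push forward along $f$ using the factorization through the fixed components of $\cY$, and then invoke localization on the smooth stack $\cY$ before pushing to a point. The only cosmetic difference is that the paper phrases the step on $\cY$ as ``apply the localization formula on $\cY$'' (after passing to cohomology via Poincar\'e duality), whereas you spell out the underlying self-intersection identity $l_k^*(l_k)_*\alpha = e(N_{G_k})\cdot\alpha$ explicitly.
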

\begin{proof}
% We will prove the theorem in the case where $\cY$ is smooth and the obstruction theory is the natural one, since this is the situation where we use the lemma later. The general case is similar.

Let $F_j$ (resp. $G_\ell$) be the components of $\cX^\TT$ (resp. $\cY^{\TT}$). For each $j$ there exists an $\ell(j)$ and a commuting diagram
\[
\begin{tikzcd}
F_j \arrow[d, "i_j"] \arrow[r, "f_j"] & G_{\ell(j)} \arrow[d, "i_{\ell(j)}"]\\
\cX \arrow[r, "f"] & \cY
\end{tikzcd}
\]
By the virtual localization theorem for $\cX$ and the projection formula \cite[IX.3.7]{iversen}, we have an equality
\[
f_*(\gamma \cap [X]^{\vir}) = f_*\left[\sum_j i_{j*} \left( \frac{i_j^*\gamma \cap [F_j]^{\vir}}{e(N_{F_j})}\right)\right]
\]
in $H^{\BM, \TT}_\bullet(\cY; \QQ)\otimes_{R_\TT} S_{\TT}$. Since $\cY$ is smooth, by Poincar\'e duality we may view this as an equality in $H^\bullet_{\TT}(\cY, \QQ)\otimes_{R_\TT} S_{\TT}.$
Hence we can apply the localization formula on $\cY$ to the left hand side. After commuting the pushforwards on the right hand side, we obtain
\[
\sum_\ell i_{\ell*}\left( \frac{i_\ell^*(f_*(\gamma \cap [X]^\vir))\cap[G_\ell]^\vir}{e(N_{G_\ell})}\right) = \sum_j i_{\ell(j)*}f_*  \left( \frac{i_j^*\gamma \cap [F_j]^{\vir}}{e(N_{F_j})}\right).
\]
Now pushing forward both sides to $\Spec(\CC)$ yields the result.
\end{proof}

\subsubsection{Gromov-Witten theory of noncompact targets via localization}\label{sec:app2}
Recall that $\TT$ is an algebraic torus.
Let $\cX$ be a smooth Deligne-Mumford stack (not necessarily arising from an affine GIT quotient) equipped with a $\TT$-action such that the fixed locus $\cX^{\TT}$ is proper. Following \cite[Sec~2]{CCIT15}, we define a $\TT$-equivariant $J$-function in this situation via localization. Note that the J-function defined here differs from the one in \cite[Sec~2]{CCIT15} by a sign and a factor of $z$.

We define a pairing on $H^\bullet_{CR, \TT}(\cX; \QQ)$ by the rule
\[
(\alpha, \beta) = \int^\TT_\cX \alpha \cup \inv^*\beta.
\]
 The equivariant Novikov ring $\Lambda^\TT_\cX$ is the completion of $S_\TT [\Eff(\cX)]$ with respect to the same valuation as before.
For $\gamma_i \in H^\bullet_{\CR, \TT}(\cX; \QQ)$ we define
\begin{equation}\label{eq:def-corr}
\langle \gamma_1\psi^{k_1}, \ldots, \gamma_n\psi^{k_n} \rangle^{\cX, \TT}_{0, n, B} = \int^\TT_{\overline{\cM}} ev_1^*(\gamma_1)\psi^{k_1} \cup \ldots \cup ev_n^*(\gamma_n)\psi^{k_n} 
\end{equation}
where $\overline{\cM} := \overline{\cM}_{0, n}(\cX, B)$ is the moduli space of stable maps to $\cX$, equipped with the \textit{weighted} $\TT$-equivariant virtual cycle.

Let $\{\phi_\alpha\}$ be a basis for $H^\bullet_{\CR, \TT}(\cX; \QQ)$ and let $\{\phi^\alpha\}$ be a dual basis with respect to the pairing given above. Now for any set of formal variables $\{t_i\}_{i=1}^N$ and for any $\bt(z) \in H^\bullet_{CR, \TT}(\cX; \QQ) \otimes_{R_\TT} \Lambda_\cX^\TT\llbracket t_i, z\rrbracket$ we define
\[
J^{\cX}_{\loc}(\bt(z), Q, z) = 1 + \frac{\bt(-z)}{z} + \sum_{\substack{n \geq 0\\B \in \Eff(\cX)}}\sum_{\alpha} \frac{Q^B}{n!}\phi^\alpha \left\langle \frac{\phi_{\alpha}}{z(z-\psi_1)}, \bt(\psi_2), \ldots, \bt(\psi_{n+1}) \right\rangle^{\cX, \TT}_{0, n+1, B}
\]
where the the correlator in $J^{\cX}_{\loc}$ is defined as in \cite[Sec~2.2]{CCIT15}---it is a sum of the invariants in \eqref{eq:def-corr}. Observe that $J^{\cX}_{\loc}(\bt(z), Q, z)$ is an element of $H^\bullet_{\CR, \TT}(\cX; \QQ)\otimes_{R_\TT} \Lambda_\cX^\TT\llbracket t_i\rrbracket\{z, z^{-1}\}.$
% \todo{something is wrong here. $I^{yang}$ is supposed to differ from $I^{Tseng}$ by a factor of $\mathbf{a}$. In my "proof", am I careful with factors of a?}

\subsubsection{Comparison of the two approaches}
Let $X$, $G$, and $\theta$ be as in Section \ref{sec:jfuncs} and let $\cX := X\sslash G$. Suppose moreover that $\TT$ acts on $\cX$ with a compact fixed locus. 

\begin{proposition}
For any $\bt(z)$, the non-equivariant limit of $J^{\cX}_{\loc}(\bt(z), Q, B)$ exists and is equal to $J^{\cX}(\bt(z), Q, B)$.
\end{proposition}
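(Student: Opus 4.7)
The plan is to prove the equality term-by-term. Both series decompose as sums indexed by $n \geq 0$ and $B \in \Eff(\cX)$, so I fix such a pair and compare the corresponding summands. Set
\[
A := \frac{1}{z(z-\psi_1)} \cup \prod_{i=2}^{n+1} ev_i^*(\bt(\psi_i)).
\]
The $(n,B)$-summand of $J^\cX_\loc$ is $\sum_\alpha \phi^\alpha \int^\TT_{\overline{\cM}} ev_1^*(\phi_\alpha) \cup A$ (using the weighted equivariant virtual class $[\overline{\cM}]^w$), while the $(n,B)$-summand of $J^\cX$ is $\inv^* ev_{1*}(A \cap [\overline{\cM}]^w)$. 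I will match these by pushing the localization integral to $\In{\cX}$ and then resolving the identity via Poincaré duality.

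First I would apply Lemma \ref{lem:push-twice} to the evaluation $ev_1 : \overline{\cM} \to \In{\cX}$. Its hypotheses are satisfied because: $\In{\cX}$ is smooth since $\cX$ is smooth; $ev_1$ is proper because both $\overline{\cM}$ and $\In{\cX}$ are proper over the affine scheme $X \sslash_0 G$, so $ev_1$ is proper by cancellation through a separated morphism; the $\TT$-fixed loci $\overline{\cM}^\TT$ and $\In{\cX}^\TT$ are proper because they parametrize objects with image contained in the proper locus $\cX^\TT$; and the perfect obstruction theory on $\overline{\cM}$ admits a global two-term locally free resolution, so virtual localization applies, and passes to the weighted virtual class because the locally constant weighting factor is $\TT$-invariant. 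The lemma then rewrites the summand of $J^\cX_\loc$ as
\[
\sum_\alpha \phi^\alpha \int^\TT_{\In{\cX}} \phi_\alpha \cup ev_{1*}\bigl(A \cap [\overline{\cM}]^w\bigr).
\]

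Next I would invoke the resolution of identity for the dual bases $\{\phi_\alpha\}, \{\phi^\alpha\}$ of $H^\bullet_{\CR,\TT}(\cX;\QQ) \otimes_{R_\TT} S_\TT$ with respect to the pairing $(\alpha,\beta) = \int^\TT_{\In{\cX}} \alpha \cup \inv^*\beta$. For any class $v$ one has $v = \sum_\alpha \phi^\alpha (\phi_\alpha, v)$, which after substituting $v = \inv^* ev_{1*}(A \cap [\overline{\cM}]^w)$ and using that $\inv$ is an involution gives
\[
\sum_\alpha \phi^\alpha \int^\TT_{\In{\cX}} \phi_\alpha \cup ev_{1*}\bigl(A \cap [\overline{\cM}]^w\bigr) = \inv^* ev_{1*}\bigl(A \cap [\overline{\cM}]^w\bigr).
\]
The right-hand side is an equivariant class obtained by proper pushforward from an equivariant cycle on a Deligne--Mumford stack, with no denominators in the equivariant parameters; its non-equivariant limit exists and is exactly the $(n,B)$-summand of $J^\cX(\bt(z), Q, z)$. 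Summing over $n$ and $B$ yields the proposition.

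The main obstacle is the rigorous verification of the hypotheses of Lemma \ref{lem:push-twice}, specifically properness of $ev_1$ and the existence of a global two-term resolution of the equivariant perfect obstruction theory on $\overline{\cM}$ in the presence of the weighted virtual class; once these are established, the remaining cohomological manipulations are formal consequences of equivariant Poincaré duality on the smooth inertia stack $\In{\cX}$.
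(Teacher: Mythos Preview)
Your proof is correct and follows essentially the same approach as the paper: both apply Lemma~\ref{lem:push-twice} (together with the projection formula, which you use implicitly when passing $\phi_\alpha$ through $ev_{1*}$) to identify the $(n,B)$-summand of $J^\cX_{\loc}$ with the equivariant version of the corresponding summand of $J^\cX$, and then take the nonequivariant limit. The paper phrases this as introducing an intermediate series $J^{\cX,\TT}$ and pairing both sides with $\phi_\alpha$, whereas you unpack the resolution of identity directly; these are equivalent manipulations.
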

\begin{proof}
We first observe that the definition of $J^{\cX}$ can be extended to the $\TT$-equivariant setting by using $\bt(z)$ and $\Lambda^\TT_\cX$ as defined in Section \ref{sec:app2}, and by replacing the pullbacks and pushforwards in \eqref{eq:jfunc-def} with their equivariant counterparts. We notate these series by $J^{\cX, \TT}(\bt(z), Q, z)$. We claim
\begin{equation}\label{eq:j-equals}
J^{\cX, \TT}(\bt(z), Q, z) = J^{\cX}_{\loc}(\bt(z), Q, z).
\end{equation}
This can be checked by computing the pairing of each side with $\phi_\alpha$. To compute the pairing on the left hand side we apply the projection formula \cite[IX.3.7]{iversen} for $ev_{1*}$ and use Lemma \ref{lem:push-twice} for the proper morphism $ev_{1*}$. 
Since the nonequivariant limit of the left hand side is equal to $J^{\cX}(\bt(z), Q, z)$ by construction, the proposition follows. 
\end{proof}

% Fix a basis $\{\phi_\alpha\}$ for $H^\bullet_{\CR}(\cX; \CC)$.\todo{over $\CC$ or $\QQ$?}, and let $\{\tau^\alpha\}$ be formal variables. The $J$-function 
% \begin{equation}
% J^{\cX}(\tau, Q, z) \in \cH\llbracket \tau^\alpha \rrbracket
% \end{equation}
% is a formal power series in the parameters $\tau^\alpha$ with coefficients in $\cH$. We view it as a function by allowing $\tau^\alpha$ to be replaced by any element of $H^\bullet_{\CR}(\cX; \CC)\otimes \Lambda[z]$\todo{correct?}.
% There are differing conventions for the definition of the assignment \eqref{eq:Jfunc}; we use the formula in \todo{cite something}. \todo{say something about blip-valued points on the lagrangian cone}

% \todo{write this in terms of general GIT quotients}
% Let $\cX$ be a GIT quotient of an affine variety.\todo{clean up} Gromov-Witten invariants are defined in [cite]. The $J$-function [print] determines all of them by [what]. Explain its variables; we have $\fq \in H_2(\cX, \QQ)$ \todo{check!!} \todo{the s=ss assumption needs to be introduced here}

\printbibliography
\end{document}